\newcommand*{\mailto}[1]{\href{mailto:#1}{\nolinkurl{#1}}}
\newcommand{\Dx}{{\Delta x}}
\newcommand{\R}{\mathbb{R}}
\renewcommand{\id}{\text{\normalfont{id}}}
\newcommand{\arxiv}[1]{\href{http://arxiv.org/pdf/#1}{arXiv:#1}} 
\newcommand{\D}{\mathcal{D}}
\newcommand{\F}{\mathcal{F}}
\newcommand{\M}{\mathcal{M}}
\newcommand{\T}{\boldsymbol{\scaleobj{0.75}{\mathcal{T}}}}
\DeclareMathOperator*{\argmin}{arg\,min}
\apptocmd{\lim}{\limits}{}{}
\numberwithin{equation}{section}
{
    \theoremstyle{plain}
    \newtheorem{definition}{Definition}[section]
    \newtheorem{remark}[definition]{Remark}
    \newtheorem{prop}[definition]{Proposition}
    \newtheorem{example}[definition]{Example}

    \newtheorem{theorem}[definition]{Theorem}
    \newtheorem{corollary}[definition]{Corollary}
    \newtheorem{lemma}[definition]{Lemma}   
}
\newlist{thmlist}{enumerate}{1}
\setlist[thmlist]{label=(\roman{thmlisti}), 
, noitemsep}
\title[Convergence rate for numerical $\alpha$-dissipative solutions]{Rate of convergence for numerical  \fontsize{15pt}{17pt}{$\alpha$}-dissipative solutions of the Hunter--Saxton equation}
\author[T. Christiansen]{Thomas Christiansen}
\address{Department of Mathematical Sciences\\ NTNU Norwegian University of Science and Technology\\ NO-7491 Trondheim\\ Norway}
\email{\mailto{thomas.christiansen@ntnu.no}}
\urladdr{\url{https://www.ntnu.edu/employees/thomachr}}
\author[K. Grunert]{Katrin Grunert}
\address{Department of Mathematical Sciences\\ NTNU Norwegian University of Science and Technology\\ NO-7491 Trondheim\\ Norway}
\email{\mailto{katrin.grunert@ntnu.no}}
\urladdr{\url{https://www.ntnu.edu/employees/katrin.grunert}}
\thanks{Research supported by the grant {\it Wave Phenomena and Stability --- a Shocking Combination (WaPheS)} from the Research Council of Norway.}  
\subjclass[2020]{Primary: 65M15, 65M25; Secondary: 35Q35}
\keywords{Hunter--Saxton equation, $\alpha$-dissipative solutions, numerical method, convergence rate}
\begin{document}

\counterwithin{equation}{section}
\raggedbottom
 \allowdisplaybreaks
 
 \begin{abstract}
 We prove that $\alpha$-dissipative solutions to the Cauchy problem of the Hunter--Saxton equation, 
 where $\alpha \in W^{1, \infty}(\R, [0, 1))$, can be computed numerically with order $\mathcal{O}(\Dx^{\nicefrac{1}{8}}+\Dx^{\nicefrac{\beta}{4}})$ in $L^{\infty}(\R)$, provided there exist constants $C > 0$ and $\beta \in (0, 1]$ such that the initial spatial derivative $\bar{u}_{x}$ satisfies $\|\bar{u}_x(\cdot + h) - \bar{u}_x(\cdot)\|_2 \leq Ch^{\beta}$ for all $h \in (0, 2]$.  The derived convergence rate is exemplified by a number of numerical experiments. 
\end{abstract}

\maketitle

\vspace{-0.8cm}
 \section{Introduction}
 
 This paper is concerned with the Cauchy problem for the Hunter--Saxton (HS) equation, which takes the form  
   \begin{equation}\label{eq:HS}
 	u_t(t, x) +uu_x(t, x) = \frac{1}{4}\int_{-\infty}^xu_x^2(t, z)dz - \frac{1}{4}\int_x^{\infty}u_x^2(t, z)dz, \hspace{0.4cm} u\vert_{t=0} = \bar{u}. 
\end{equation}
The above equation was derived in \cite{DynamicsDirector} via a first-order asymptotic expansion around constant equilibrium states of the nonlinear variational wave equation, $\psi_{tt} + c(\psi)(c(\psi)\psi_x)_x = 0$. One can therefore view solutions of \eqref{eq:HS} as describing the long-time behavior of these perturbed equilibrium states. 

The HS equation has many intriguing properties, but of particular importance and of main concern from a numerical perspective is the fact that solutions experience {\em wave breaking} -- a phenomenon characterized by pointwise blow-ups of the spatial derivative $u_x$ within finite time. Classical solutions therefore cease to exist, and one has to study weak solutions. Despite $u_x$ developing singularities at certain points in space-time, $u(t, \cdot)$ remains continuous for all $t\geq 0$, in fact H{\"o}lder continuous, while $u_x(t, \cdot) \in L^2(\R)$. In addition, wave breaking also gives rise to energy concentrations on sets of measure zero, and in order to extend weak solutions beyond wave breaking, one has to ambiguously choose how to manipulate this concentrated energy.  

Two natural choices immediately come to mind: \emph{i)} one can choose to reinsert all the concentrated energy at every wave breaking occurrence, leading to {\em conservative solutions}, or \emph{ii)} all the concentrated energy can be removed from the system, yielding {\em dissipative solutions}. Well-posedness has been established for both kinds of solutions, see \cite{BressanDissipative, LipschitzConservative, DafermosCharacteristics, UniquenessConservative}. Yet a more general and flexible concept is that of $\alpha$-dissipative solutions,  proposed in \cite{AlphaCH} for the related Camassa--Holm equation and in \cite{AlphaHS} for the HS equation. As the name suggests, the idea is to remove an $\alpha$-fraction of the concentrated energy, but in addition, one may allow $\alpha$ to belong to $W^{1, \infty}(\R, [0, 1)) \cup \{1\}$, such that the amount of energy removed can depend on the spatial location at which wave breaking takes place. This is the kind of solution we will focus on in this work. Let us emphasize that the existence of such solutions has been shown in \cite{AlphaHS}, but uniqueness is still an open question. 

To keep track of the corresponding energy, which distinguishes the various continuations, it is common to augment the wave profile $u$ by a nonnegative, finite Radon measure $\mu$ that represents the energy density, see e.g., \cite{LipschitzConservative, AlphaHS}. This measure encodes all the information about wave breaking, and its absolutely continuous part satisfies $d\mu_{\mathrm{ac}} = u_x^2dx$. Moreover, $\mu_{\mathrm{sing}}$, its singular part, tells us where energy has concentrated. However, let us also point out that wave breaking can occur in the form of infinitesimal energy concentrations, which are described by $\mu_{\mathrm{ac}}$, see for instance the cusped wave profile in Example~\ref{ex:plainCusp}, which is discussed in \cite[Ex. 5.2]{Alpha1} and \cite[Ex. 3]{NumericalConservative} as well. 

All the aforementioned continuations require the energy to be nonincreasing in time and are therefore governed by the following system 
\begin{subequations}\label{eq:reformulatedHS}
 \begin{align}
 	u_t +uu_x &= \frac{1}{2}F - \frac{1}{4}F_{\infty}(t), \label{eq:reform1}\\
	\mu_t  + (u\mu)_x & \leq 0, \label{eq:reform2}
\end{align} 
\end{subequations}
 where $F(t, x) = \mu(t, (-\infty, x))$ denotes the cumulative energy associated with the measure $\mu(t)$. This system does not provide us with enough information to distinguish different types of weak solutions, due to the measure-valued transport inequality. In other words, except from the particular case of equality in \eqref{eq:reform2}, which leads to conservative solutions, the above system contains no information about the exact rate of energy dissipation. 
To resolve this issue, one introduces a coordinate transformation, namely a mapping $L$, which transforms the Eulerian initial data $(\bar{u}, \bar{\mu}, \bar{\nu})$ into a quadruplet $\bar{X} = (\bar{y}, \bar{U}, \bar{V}, \bar{H})$ in Lagrangian coordinates. Here $\bar{\nu}$ is a measure added purely for technical reasons in the sense that the same solution $(u, \mu)(t)$ is recovered for any $t\geq 0$, irregardless of which $\bar \nu$ we pick initially, see \cite[Lem. 2.13]{LipschitzAlpha} for details. The crux is that, under this transformation, \eqref{eq:reformulatedHS} rewrites into a system of ODEs that attains unique and global solutions and it also gives us control of the exact loss of energy upon wave breaking. More precisely, see \cite[Sec. 2]{AlphaHS}, the time evolution of the $\alpha$-dissipative solution $X=(y,U,V,H)$ with initial data $\bar{X}=(\bar{y}, \bar{U}, \bar{V}, \bar{H})$ is governed by 
 \begin{subequations}\label{eq:LagrSystemIntro}
 \begin{align}
 	y_t(t, \xi) &= U(t, \xi), \\
	U_t(t, \xi) &= \frac{1}{2}V(t, \xi) - \frac{1}{4}V_{\infty}(t), \\
	V(t, \xi) &= \int_{-\infty}^{\xi} \big(1 - \alpha(y(\tau(\eta), \eta)) \chi_{\{\omega \mid t \geq \tau(\omega) > 0 \}}(\eta) \big) \bar{V}_{\xi}(\eta)d\eta, \\
	H_t(t, \xi) &= 0, 
\end{align}
\end{subequations}
where $V_{\infty}(t) = \displaystyle \lim_{\xi \rightarrow \infty}V(t, \xi)$ represents the total Lagrangian energy and $\tau: \R \rightarrow [0, \infty]$ is the {\em wave breaking function} given by  
\begin{align*}
 	\tau(\xi) &= \begin{cases}
	0, & \text{ if }\bar{y}_{\xi}(\xi) = 0 = \bar{U}_{\xi}(\xi), \\
	-2\frac{\bar{y}_{\xi}(\xi)}{\bar{U}_{\xi}(\xi)}, & \text{ if } \bar{U}_{\xi}(\xi) < 0, \\
	\infty, & \text{otherwise}. 
\end{cases}
\end{align*}

 Solving \eqref{eq:LagrSystemIntro} amounts to following the sought solution $(u, F)$ along generalized characteristics, and to recover $(u,F)$, one introduces yet another  nonlinear mapping $M$.  The nonlinear nature of the mappings $L$ and $M$ severely complicates the upcoming convergence rate analysis.  
 
The pointwise blow-up of $u_x$ at wave breaking introduces difficulties with {\em numerical stability}, especially for finite difference schemes, but also for standard continuous Galerkin methods. By relaxing the continuity hypothesis and allowing for discontinuities across element interfaces, the authors in \cite{DG1} introduce a local discontinuous Galerkin (LDG) method for \eqref{eq:HS}. This scheme is based on a central flux formulation and its spatial derivative has a nonincreasing $L^2(\R)$-norm, i.e., the method is {\em energy stable}. However, none of the presented numerical results experiences wave breaking, and as no rigorous convergence analysis is conducted, it is unclear whether this method is able to handle wave breaking. Moreover, the same authors revisit the LDG method in \cite{DG2}, and replace the central flux with an upwind flux, which again leads to an energy stable method. They also propose a new DG method, which turns out to coincide with the finite difference scheme developed in \cite[Sec. 6]{FDDissipative} when choosing piecewise constant functions. Since the upwind scheme in \cite[Sec. 6]{FDDissipative} converges towards dissipative solutions, even when wave breaking occurs, so does the DG method in this case.  

There is an intrinsic numerical diffusion in traditional finite difference schemes and they therefore naturally give rise to dissipative solutions as shown in \cite{FDDissipative}.  Moreover, such solutions are characterized by the following Oleinik-type condition, see \cite{DafermosCharacteristics, FDDissipative}, 
\begin{equation*}
	u_x(t, x) \leq \frac{2}{t} \hspace{1.2cm} \text{for all } t >0 \text{ and a.e. } x \in \R,
\end{equation*}
which provides additional stability and makes them more amendable for numerics. In spite of that, a scheme based on the method of characteristics, which equivalently can be expressed as a finite difference scheme, was shown to converge to conservative solutions in \cite{NumericalConservative}. Furthermore, several geometrically oriented finite difference methods were proposed in \cite{GeometricInt}, albeit without any convergence analysis.  

Very recently, a convergent numerical algorithm for $\alpha$-dissipative solutions was proposed in \cite{Alpha1} for $\alpha \in [0, 1]$ and subsequently extended to $\alpha \in W^{1, \infty}(\R, [0, 1))$ in \cite{Alpha2}. The latter method, to which we derive a convergence rate, relies on the observation that if the initial data $\bar{u}$ in \eqref{eq:HS} is piecewise linear, then so is the associated $\alpha$-dissipative solution $u(t, \cdot)$ for all $t\geq 0$. Moreover, this solution can be computed explicitly after possibly resolving a finite number of wave breaking occurrences. Since any initial data $\bar{u}$ in \eqref{eq:HS} can be approximated in $L^{\infty}(\R)$ by a sequence $\{\bar{u}_{\Dx}\}_{\Dx > 0}$ of piecewise linear functions, one introduces a piecewise linear projection operator $P_{\Dx}$ that preserves the continuity of $\bar{u}$, the total energy $\bar{F}_{\infty}$, and the relation $d\bar{\mu}_{\mathrm{ac}} = \bar{u}_x^2dx$. Especially the last property is essential to ensure that the numerical approximation, for each fixed $\Dx > 0$, dissipates energy when it is supposed to. The numerical solution is thereafter evolved by an iteration scheme that is based on computing successive approximations of the solution to \eqref{eq:LagrSystemIntro} with initial data $L \circ P_{\Dx}((\bar{u}, \bar{\mu}, \bar{\nu}))$. This is explained thoroughly in Section~\ref{sec:numMethod} and \cite[Sec. 3.2]{Alpha2}.   

Despite the existence of several numerical methods for \eqref{eq:HS}, the convergence rate derived in \cite{AlphaRate} for $\alpha\in [0,1]$ and here for $\alpha\in W^{1,\infty}(\mathbb{R}, [0,1))$ are the first {\em robust convergence rates}, i.e., error rates that persist even if wave breaking takes place. The few results that exist elsewhere either break down at wave breaking or prevent it from taking place, see \cite[Sec. 1]{AlphaRate} for a brief discussion. To be more precise, we prove that the numerical method from \cite{Alpha2} satisfies 
\begin{equation}\label{eq:rateIntro}
	\sup_{t \in [0, T]} \! \|u(t) - u_{\Dx}(t)\|_{\infty} \leq \mathcal{O}(\Dx^{\gamma}),
\end{equation}
for some $\gamma > 0$, where $T > 0$ is a fixed final simulation time, $\Dx$ denotes the spatial discretization parameter, and $\{u_{\Dx}\}_{\Dx > 0}$ is the family of numerical approximations. It suffices to this end, to derive a convergence rate in $[L^{\infty}(\R)]^2$ for the numerical Lagrangian pair $(y_{\Dx}\!-\!\id, U_{\Dx})(t)$, since by \cite[Lem. 4.11]{Alpha1}, we have
\begin{equation*}
	\sup_{t \in [0, T]} \!\|u(t) - u_{\Dx}(t)\|_{\infty} \! \leq \!\sup_{t \in [0, T]} \!\|U(t) - U_{\Dx}(t)\|_{\infty} + \sqrt{\bar{\mu}(\R)}\! \!  \sup_{t \in [0, T]}\!\|y(t) - y_{\Dx}(t)\|_{\infty}^{\nicefrac{1}{2}}. 
\end{equation*}
This is however challenging,  due to the highly nonlinear relation between the Eulerian and Lagrangian variables. In particular, the singular part $\bar{\mu}_{\mathrm{sing}}$, which is supported on a set of measure zero in Eulerian coordinates, leads to a set $\mathcal{A}$ of positive measure in Lagrangian coordinates, on which we are unable to connect the differentiated Eulerian and Lagrangian variables. As a consequence, we only have a convergence rate for the initial Lagrangian energy density, $\bar{V}_{\Dx, \xi}$,  on $\R\setminus \mathcal{A}$. However, as the norms $\|y(t) - y_{\Dx}(t)\|_{\infty}$ and $\|U(t) - U_{\Dx}(t)\|_{\infty}$ heavily depend on the difference $V_{\xi}(t) - V_{\Dx, \xi}(t)$ on all of $\R$, cf. \eqref{eq:LagrSystemIntro}, we have to prove that the set $\mathcal{A}$ contributes at most with a certain order. To achieve this, one needs to postulate additional regularity on the Eulerian initial data. In particular, we require that there exist constants $C > 0$ and $\beta \in (0, 1]$ such that 
\begin{equation}\label{eq:extraCondIntro}
	\|\bar{u}_x(\cdot + h) - \bar{u}_x(\cdot)\|_2 \leq Ch^{\beta} \hspace{0.4cm} \text{for all } h \in (0, 2].
\end{equation}
The remaining argument then hinges on the fact that, by construction, the aforementioned projection operator, $P_{\Dx}$, preserves the mass of the singular part $\bar{\mu}_{\mathrm{sing}}$ locally. This property is combined with a novel pair of transformations, inspired by \cite[Def. 6]{NVWE}, that allows us to prove that the contribution from the set $\mathcal{A}$ is at most of order $\mathcal{O}(\Dx^{\nicefrac{\beta}{2}})$. 

The way in which we proceed to prove the convergence rate shares a lot of similarities with the one proposed in \cite{AlphaRate} for $\alpha\in [0,1]$ -- the main difference is the transformation used to deal with the problematic set $\mathcal{A}$. The mapping in \cite{AlphaRate}, which can be constructed explicitly, maps the points where the exact solution breaks initially to those points where the numerical solution breaks initially, and, when combined with the local preservation of $\bar{\mu}_{\mathrm{sing}}$, this enables one to prove that the set $\mathcal{A}$ contributes at most with order $\mathcal{O}(\Dx^{\nicefrac{\beta}{2}})$.  However, the author of \cite{AlphaRate} has to assume that the initial energy measure $\bar{\mu}$ has no singular continuous part. In this work, on the other hand, we introduce a pair of transformations which rescales both the projected and the exact initial data. In this way we are able to handle any finite, positive Radon measure $\bar\mu$, but this comes at the cost of only having implicit expressions for the transformations. In spite of that, we also obtain an improved convergence rate; instead of the $\mathcal{O}(\Dx^{\nicefrac{\beta}{8}})$-rate shown in \cite[Thm. 4.11]{AlphaRate} for $\alpha\in[0,1]$, we deduce that \eqref{eq:rateIntro} holds with $\gamma = \frac{1}{4}\min \{ \beta, \frac{1}{2} \}$ for $\alpha$-dissipative solutions where $\alpha \in W^{1, \infty}(\R, [0, 1)) \cup \{1\}$, which reduces to $\mathcal{O}(\Dx^{\nicefrac{\beta}{4}})$ when $\alpha$ is a constant. The transformations used in \cite{AlphaRate} and herein are compared in Section~\ref{sec:AlternativeMap}. 

Note that even if one starts with a purely absolutely continuous energy measure, a singular continuous part may form at some later time. Thus, in order to be able to analyze numerical methods where one maps back and forth between Eulerian and Lagrangian coordinates, like in \cite{NumericalConservative}, one has to be able to treat singular continuous measures. We hope that our approach also turns out valuable in such cases. 

This paper is organized in the following way. In Section~\ref{sec:Prelim} we outline how to construct $\alpha$-dissipative solutions via the generalized method of characteristics from \cite{LipschitzConservative} and \cite{AlphaHS} and we thereafter recall the numerical method from \cite{Alpha2}. Then, in Section~\ref{sec:rateMethod}, we proceed by proving the aforementioned convergence rate. Our rigorous analysis of the pair of transformations and the subsequent change of variables play a key role. Finally, in Section~\ref{sec:NumericalExp} we conduct several numerical experiments which support our theoretical result. In particular, the cusped wave profile analyzed in \cite[Ex. 5.2]{Alpha1} and  \cite[Ex. 3]{NumericalConservative} satisfies \eqref{eq:extraCondIntro} with $\beta = \nicefrac{1}{6}$, and hence converges with order $\mathcal{O}(\Dx^{\nicefrac{1}{24}})$.

\section{Preliminaries}\label{sec:Prelim} 
The numerical method for $\alpha$-dissipative solutions, to which we derive a convergence rate, is based on the generalized method of characteristics from \cite{LipschitzConservative} and \cite{AlphaHS}.  We therefore split this section into two parts: in the first part we remind the reader of this generalized method of characteristics and the definition of $\alpha$-dissipative solutions, which is followed by a presentation of the numerical method proposed in \cite{Alpha2}.

\subsection{The generalized method of characteristics and $\alpha$-dissipative solutions}
Let $\alpha \in W^{1, \infty}(\R, [0, 1))$ and denote by $\M^+(\R)$ the space of finite, positive Radon measures on $\R$. In order to define the set of admissible initial data, we have to recall some function spaces from \cite{LipschitzConservative} and \cite{AlphaHS}. 

Let us start by introducing the Banach space 
\begin{equation*}
	E:= \{f \in L^{\infty}(\R) \!\mid f' \in L^2(\R) \} \hspace{0.35 cm} \text{endowed with} \hspace{0.35 cm} \|f\|_E := \|f\|_{\infty} + \|f'\|_2, 
\end{equation*}
and let 
\begin{equation*}
	H_d^1(\R) := H^1(\R) \times \R^d, \hspace{0.35cm} d = 1, 2. 
\end{equation*}
Furthermore, introduce a partition of unity on $(-\infty, 1) \cup (-1, \infty) = \R$, i.e., a pair of functions $\phi^-$ and $\phi^+$ belonging to $C^{\infty}(\R)$, which satisfies
\begin{enumerate}[label=(\roman*)]
	\item $\phi^+ + \phi^- = 1 $, 
	\item $\mathrm{supp}(\phi^+) \subset (-1, \infty)$ and $\mathrm{supp}(\phi^-) \subset (-\infty, 1)$,
	\item $0 \leq \phi^{\pm} \leq 1$.  
\end{enumerate}
This pair is used to define the following linear, continuous and injective mappings 
\begin{align*}
	R_1\!: H_1^1(\R) &\rightarrow E, \hspace{0.5cm} (\bar{f}, a ) \mapsto f= \bar{f} + a \phi^+, \\
	R_2\!:H_2^1(\R) &\rightarrow E, \hspace{0.6cm} (\bar{f}, a, b) \mapsto f=\bar{f} + a\phi^+ + b\phi^-,
\end{align*}
which allow us to introduce the Banach spaces $E_1$ and $E_2$ as the images of $H_1^1(\R)$ and $H_2^1(\R)$ under the mappings $R_1$ and $R_2$, respectively, that is, 
\begin{align*}
	E_1 := R_1(H_1^1(\R)) \hspace{0.5cm} \text{and} \hspace{0.5cm} E_2:= R_2(H_2^1(\R)), 
\end{align*}
equipped with the norms
\begin{align*}
	\| f\|_{E_1}\!:&\!= \|\bar{f} + a \phi^+ \|_{E_1} = \Big(\|\bar{f}\|_{H^1(\R)}^2 + a^2 \Big)^{\nicefrac{1}{2}}\!, \\
	\| f\|_{E_2}\!:&\!= \|\bar{f} + a \phi^+ + b\phi^-\|_{E_2} = \Big(\|\bar{f}\|_{H^1(\R)}^2 + a^2 +b^2 \Big)^{\nicefrac{1}{2}}\! \!.
\end{align*}
It was shown in \cite[Sec. 2]{NonVanishingAsymptotes} that the spaces $E_1$ and $E_2$ are independent of the chosen partition of unity. In addition, $R_1$ is also well-defined when applied to functions belonging to $L_1^2(\R) = L^2(\R) \times \R$ and we therefore introduce
\begin{equation*}
	E_1^0\! := R_1(L_1^2(\R)) \hspace{0.25cm} \text{endowed with} \hspace{0.25cm} \|f\|_{E_1^0} := \Big(\|\bar{f}\|_2^2 + a^2\Big)^{\nicefrac{1}{2}} \! \!. 
\end{equation*}

With this in place, we can finally define the set of Eulerian coordinates $\D^{\alpha}$, which contains all the admissible initial data to \eqref{eq:reformulatedHS}. 

\begin{definition}\label{def:EulSet}
   The space $\D^{\alpha}$ consists of all triplets $(u, \mu, \nu)$ that satisfy 
    \begin{enumerate}[label=(\roman*)]
        \item $u \in E_2$,
        \item $\mu \leq \nu \in \M^{+}(\R)$, \label{def:EulSet2}
        \item $\mu_{\mathrm{ac}} \leq \nu_{\mathrm{ac}}$,
        \item $d\mu_{\mathrm{ac}} = u_x^2dx$, \label{def:EulSet4}
        \item $\mu( (-\infty, \cdot)) \in E_1^0$,
        \item $\nu((-\infty, \cdot))  \in E_1^0$,
        \item $\frac{d\mu}{d\nu}(x) > 0$, and $\frac{d\mu_{\mathrm{ac}}}{d\nu_{\mathrm{ac}}}(x) = 1$ if $u_x(x) < 0$. 
    \end{enumerate}
    Moreover, we define
   \begin{equation}\label{eq:D0}
    	\D_0^{\alpha} = \{(u, \mu, \nu) \in \D^{\alpha} \! \mid \mu = \nu \}. 
   \end{equation}
\end{definition}

By \cite[Thm. 1.16]{RealFolland}, there is a one-to-one relation between $\mu$ and the cumulative energy $F(x) = \mu((-\infty, x))$, and likewise between $\nu$ and $G(x) = \nu((-\infty, x))$. These functions are bounded, left-continuous, increasing, and satisfy 
\begin{equation*}
	\lim_{x \rightarrow -\infty} F(x) =0 = \lim_{x \rightarrow -\infty}G(x),
\end{equation*}
in addition to 
\begin{equation*}
	F_{\infty} = \lim_{x \rightarrow \infty}F(x) = \mu(\R) \hspace{0.35cm} \text{and} \hspace{0.35cm}  G_{\infty} = \lim_{x \rightarrow \infty} G(x) = \nu(\R). 
\end{equation*}

Furthermore, recall that any $\mu \in \mathcal{M}^+(\R)$ admits a decomposition of the form 
\begin{equation}\label{eq:basicDecomp}
	\mu = \mu_{\mathrm{ac}} + \mu_{\mathrm{sing}},
\end{equation}
where $\mu_{\mathrm{ac}}$ is absolutely continuous and $\mu_{\mathrm{sing}}$ is singular with respect to the Lebesgue measure, see e.g., \cite[Thm. 3.8]{RealFolland}. We also take advantage of the fact that $\mu_{\mathrm{sing}}$ can be decomposed further into two mutually singular measures
\begin{equation}\label{eq:refinedDecomp}
	\mu_{\mathrm{sing}} = \mu_{\mathrm{d}} + \mu_{\mathrm{sc}},
\end{equation}
where $\mu_{\mathrm{d}}$ is purely discrete and $\mu_{\mathrm{sc}}$ is singular continuous, see \cite[Thm. 9.7]{McDonaldAnalysis}.  A consequence of \eqref{eq:basicDecomp} and \eqref{eq:refinedDecomp} is that $F$ can be decomposed into three parts, 
\begin{equation}\label{eq:refinedDecompF}
	F(x) = F_{\mathrm{ac}}(x)  + F_{\mathrm{d}}(x) + F_{\mathrm{sc}}(x), 
\end{equation}
where $F_{\mathrm{ac}}(x) = \mu_{\mathrm{ac}}((-\infty, x))$ is absolutely continuous, $F_{\mathrm{d}}(x) = \mu_{\mathrm{d}}((-\infty, x))$ is a piecewise constant function, and $F_{\mathrm{sc}}(x) = \mu_{\mathrm{sc}}((-\infty, x))$ is continuous with $F'_{\mathrm{sc}} = 0$ a.e.. Furthermore, $F_{\mathrm{sing}}(x) = \mu_{\mathrm{sing}}((-\infty, x)) = F_{\mathrm{d}}(x) + F_{\mathrm{sc}}(x)$. The function $G(x) = \nu((-\infty, x))$ admits a similar decomposition.

Next, let us define the set of Lagrangian coordinates $\F^{\alpha}$. This requires us to introduce $E^4 = E_2 \times E_2 \times E_1 \times E_1$, which is a Banach space when equipped with 
\begin{equation}\label{eq:normB}
	\|(f_1, f_2, f_3, f_4)\|_{E^4} := \|f_1\|_{E_2} + \|f_2\|_{E_2} + \|f_3\|_{E_1} + \|f_4\|_{E_1}.
\end{equation}

\begin{definition}\label{def:LagSet} The space $\F^{\alpha}$ is composed of all quadruplets $X = (y, U, V, H)$ satisfying 
\begin{enumerate}[label=(\roman*)]
  \item $(y-\id, U, V, H) \in E^{4}\cap \big[W^{1, \infty}(\R) \big]^4 $,
  \item $ y_{\xi}, H_{\xi} \geq 0$ a.e. and there exists $c > 0$ such that $y_{\xi} + H_{\xi} \geq c $ a.e., \label{def:LagSet2}
  \item \label{def:LagSet3} $y_{\xi}V_{\xi}= U_{\xi}^2 $ a.e., 
  \item $0 \leq V_{\xi} \leq H_{\xi}$ a.e., 
  \item there exists $\kappa\! :\! \R\! \rightarrow \!(0, 1]$ such that $V_{\xi}(\xi) = \kappa(y(\xi))H_{\xi}(\xi)$ a.e., with \phantom{aa} $\kappa(y(\xi))=1$ whenever $U_{\xi}(\xi) < 0$. 
\end{enumerate}
Furthermore, we define  
\begin{equation}\label{eq:subsetsF}
	\F_0^{\alpha} := \{X \in \F^{\alpha}\!\mid y + H = \id \} \hspace{0.35cm} \text{and} \hspace{0.35cm} \F_0^{\alpha, 0} := \{X \in \F_0^{\alpha} \! \mid H = V \}. 
\end{equation}
\end{definition}

Typically, given some initial data $(\bar{u}, \bar{\mu}, \bar{\nu})$ belonging to $\D^{\alpha}$, one wants to solve \eqref{eq:reformulatedHS} combined with $\nu_t + (u\nu)_x = 0$. Moreover, to ensure that the correct amount of energy is dissipated, which is specified by $\alpha$, one uses a generalized method of characteristics. Hence, let us introduce the following mapping, which is well-defined by \cite[Prop. 2.1.5]{PhdThesisNordli}, to define the initial data in Lagrangian coordinates, $\bar X=(\bar y, \bar U, \bar V, \bar H)$.  
 
\begin{definition}\label{def:MapL}Let $L: \D^{\alpha} \rightarrow \F_0^{\alpha}$ be defined by $L((u, \mu, \nu)) = (y, U, V, H)$, where
\begin{subequations}
\begin{align}
    y(\xi) &= \sup \{x \in \R\! \mid x + \nu((-\infty, x)) < \xi \}, \label{eq:L_eq1}\\
    U(\xi) &= u (y(\xi)), \label{eq:L_eq2} \\
    H(\xi) &= \xi - y(\xi), \label{eq:L_eq3} \\
    V(\xi) &= \int_{-\infty}^{\xi} \frac{d\mu}{d\nu} (y(\eta)) H_{\xi}(\eta) d\eta \label{eq:L_eq4}. 
\end{align}
\end{subequations}
\end{definition}

In Lagrangian coordinates, the $\alpha$-dissipative solution with initial data $\bar{X} = (\bar{y}, \bar{U}, \bar{V}, \bar{H})\in \F^{\alpha}$ satisfies the following system of differential equations 
\begin{subequations}
\label{eq:LagrSystem}
\begin{align}
    y_t(t, \xi) &= U(t, \xi), \label{eq:ODE1} 
    \\
    U_t(t, \xi) &= \frac{1}{2}V(t, \xi) - \frac{1}{4}V_{\infty}(t), \label{eq:ODE2} 
    \\
    V(t, \xi) &= \int_{-\infty}^{\xi}\!\big(1 - \alpha(y(\tau(\eta), \eta)) \chi_{\{\omega \mid t \geq \tau(\omega) > 0\}}(\eta) \big) \bar{V}_{\xi}(\eta) d\eta, \label{eq:ODE3} 
    \\
    H_t(t, \xi) &= 0. \label{eq:ODE4}
\end{align}
\end{subequations}
Here $V_{\infty}(t) = \displaystyle \lim_{\xi \rightarrow \infty}V(t, \xi)$ represents the total Lagrangian energy and $\tau: \R \rightarrow [0, \infty]$ is the {\it wave breaking function} given by 
\begin{align}\label{eq:tau}
	\tau(\xi) = \begin{cases}
		0, & \text{ if }\bar{y}_{\xi}(\xi) = \bar{U}_{\xi}(\xi) = 0, \\
		- \frac{2\bar{y}_{\xi}(\xi)}{\bar{U}_{\xi}(\xi)}, & \text{ if} \bar{U}_{\xi}(\xi) < 0, \\
		\infty, & \text{ otherwise},
	\end{cases}
\end{align}
which tells us if and when the $\alpha$-dissipative solution experiences wave breaking along the characteristic labeled by $\xi \in \R$. 

It has been established in \cite[Lem. 2.2.1]{PhdThesisNordli} that \eqref{eq:LagrSystem}--\eqref{eq:tau} is globally well-posed for initial data belonging to $\F^{\alpha}$. Consequently, associated with \eqref{eq:LagrSystem}--\eqref{eq:tau}, we define the Lagrangian solution operator $S_t$, for which $\F^{\alpha}$ is an {\em invariant set}.

\begin{definition}\label{def:LagrOp}
	Define, for any $t\geq 0$, $S_t: \F^{\alpha} \rightarrow \F^{\alpha}$ by $X(t) = S_t(\bar{X})$, where $X(t)$ is the unique solution of \eqref{eq:LagrSystem}--\eqref{eq:tau} with initial data $X(0) = \bar{X} \in \F^{\alpha}$. 
\end{definition}

At last, in order to obtain the sought solution of \eqref{eq:reformulatedHS}, we have to transform the solution of \eqref{eq:LagrSystem}--\eqref{eq:tau} back to Eulerian coordinates, and the following mapping, which is well-defined by \cite[Lem. 2.1.7]{PhdThesisNordli}, is needed for this very purpose. 

\begin{definition}\label{def:MapM}
	Define $M\!: \F^{\alpha} \rightarrow \D^{\alpha}$ as $M((y, U, V, H)) = (u, \mu, \nu)$, where 
	\begin{subequations}
	\begin{align}
		u(x) &= U(\xi)  \text{ for any } \xi \in \R \text{ such that} x=y(\xi), \\
		\mu &= y_{\#}(V_{\xi}d\xi), \\
		\nu &= y_{\#}(H_{\xi}d\xi).
	\end{align}
	\end{subequations}
	Here $y_{\#}(V_{\xi}d\xi)$ is the pushfoward measure given by 
	\begin{equation*}
		\mu(A) = \int_{y^{-1}(A)} \!V_{\xi}(\xi)d\xi, \hspace{0.35cm} \text{for all Borel sets} A \subseteq \R,
	\end{equation*}
	and the measure $y_{\#}(H_{\xi}d\xi)$ is defined similarly.  
\end{definition}

By composing the mappings $L$ and $M$ with the Lagrangian solution operator $S_t$, one can construct $\alpha$-dissipative solutions, leading to the following definition.

\begin{definition}\label{def:AlphaSol}  Suppose $(\bar{u}, \bar{\mu}, \bar{\nu})\! \in \D^{\alpha}$, the $\alpha$-dissipative solution at time $t \geq 0$ is given by 
\begin{equation*}
	(u, \mu, \nu)(t) = T_t((\bar{u}, \bar{\mu}, \bar{\nu})) = M \circ S_t \circ L ((\bar{u}, \bar{\mu}, \bar{\nu})). 
\end{equation*}
\end{definition}

Note that there are three variables in Eulerian coordinates, while there are four in Lagrangian coordinates, implying that the mapping $M\! :\!\F^{\alpha} \rightarrow \D^{\alpha}$ cannot be injective. However, one can identify an equivalence relation on $\F^{\alpha}$, such that all elements belonging to the same equivalence class are mapped to the same element in Eulerian coordinates. To this end, let us define a group that acts on $\F^{\alpha}$ through {\em relabeling}. 

\begin{definition}\label{def:RelabelingGroup}
	Let $\mathcal{G}$ be the group of homeomorphisms $f:\R \rightarrow \R$ satisfying 	\begin{enumerate}[label=(\roman*)]
		\item $f- \id$, $f^{-1} -\id \in E_2$, 
		\item $f_{\xi}-1$, $(f^{-1})_{\xi}-1  \in L^\infty(\R)$. 
	\end{enumerate}
	Furthermore, define the group action $\bullet: \F^{\alpha} \times \mathcal{G} \rightarrow \F^{\alpha}$ by 
	\begin{equation}\label{eq:relabel}
		(X, f) \mapsto (y(f), U(f), V(f), H(f)) = X \bullet f. 
	\end{equation} 
\end{definition}
As mentioned, all elements belonging to the equivalence class
\begin{equation*}
	[X] := \{\hat{X} \in \F^{\alpha}\! \mid \text{there exists } f \in \mathcal{G} \text{ s.t. } \!\hat{X} = X \bullet f \},
\end{equation*}
for some $X \in \F^{\alpha}$, are mapped to the same element in Eulerian coordinates.

\begin{prop}[{\cite[Prop. 2.1.10]{PhdThesisNordli}}]
	Let $f \in \mathcal{G}$ and $X \in \F^{\alpha}$, then 
	\begin{equation*}
		M(X \bullet f) = M(X).
	\end{equation*} 
\end{prop}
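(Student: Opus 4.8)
The statement to prove is that $M(X \bullet f) = M(X)$ for all $f \in \mathcal{G}$ and $X \in \F^{\alpha}$.

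\medskip

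The plan is to verify directly, component by component, that the three Eulerian objects produced by $M$ are unchanged under relabeling. Write $X = (y, U, V, H)$ and $X \bullet f = (y \circ f, U \circ f, V \circ f, H \circ f)$, and denote $\tilde{y} = y \circ f$, etc. First I would handle the wave profile $u$. By definition of $M$, the component $\tilde{u}$ associated with $X \bullet f$ satisfies $\tilde{u}(x) = (U \circ f)(\zeta)$ for any $\zeta$ with $\tilde{y}(\zeta) = x$, i.e. $y(f(\zeta)) = x$. Setting $\xi = f(\zeta)$, which ranges over all of $\R$ as $\zeta$ does since $f$ is a homeomorphism of $\R$, this reads $\tilde{u}(x) = U(\xi)$ for any $\xi$ with $y(\xi) = x$, which is exactly the defining relation for the component $u$ of $M(X)$. (One should note in passing that $u$ is well-defined: if $y(\xi_1) = y(\xi_2)$ then $U(\xi_1) = U(\xi_2)$, which follows from $y_\xi V_\xi = U_\xi^2$ together with $y_\xi \geq 0$ — but this is part of $M$ being well-defined, cf. \cite[Lem. 2.1.7]{PhdThesisNordli}, so it may simply be cited.) Hence $\tilde{u} = u$.

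\medskip

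Next I would treat the two measures. The measure associated with $X \bullet f$ is $\tilde{\mu} = \tilde{y}_{\#}((V \circ f)_\xi \, d\xi) = (y \circ f)_{\#}\big((V_\xi \circ f) f_\xi \, d\xi\big)$, using the chain rule $(V \circ f)_\xi = (V_\xi \circ f) f_\xi$, which is valid a.e. since $f$ is locally Lipschitz with locally Lipschitz inverse (condition (ii) of $\mathcal{G}$) and $V$ is Lipschitz. For a Borel set $A \subseteq \R$, the pushforward gives
\begin{equation*}
	\tilde{\mu}(A) = \int_{(y \circ f)^{-1}(A)} (V_\xi \circ f)(\zeta) f_\xi(\zeta) \, d\zeta = \int_{f^{-1}(y^{-1}(A))} V_\xi(f(\zeta)) f_\xi(\zeta) \, d\zeta.
\end{equation*}
Now apply the change of variables $\xi = f(\zeta)$; since $f$ is an increasing (being an orientation-preserving homeomorphism with $f_\xi > 0$ a.e., which follows from $f_\xi \geq 0$ and $f$ being a homeomorphism) bi-Lipschitz bijection, the change-of-variables formula for the Lebesgue integral applies and $f\big(f^{-1}(y^{-1}(A))\big) = y^{-1}(A)$, so the right-hand side equals $\int_{y^{-1}(A)} V_\xi(\xi) \, d\xi = \mu(A)$. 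Thus $\tilde{\mu} = \mu$. The identical computation with $H_\xi$ in place of $V_\xi$ yields $\tilde{\nu} = \nu$.

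\medskip

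I would then simply assemble these three identities: since $M(X \bullet f) = (\tilde{u}, \tilde{\mu}, \tilde{\nu}) = (u, \mu, \nu) = M(X)$, the proof is complete. The only genuinely delicate point — and the one I would be most careful about — is the justification of the chain rule and change of variables for $f$, which is merely bi-Lipschitz rather than $C^1$; this is where conditions (i)--(ii) defining $\mathcal{G}$ enter, ensuring $f_\xi$ and $(f^{-1})_\xi$ exist a.e., are bounded, and are bounded away from zero, so that null sets are mapped to null sets in both directions and the substitution is legitimate. Everything else is bookkeeping. Alternatively, and perhaps more cleanly, one can avoid the pointwise chain rule entirely by testing the pushforward measures against an arbitrary $\varphi \in C_c(\R)$: $\int \varphi \, d\tilde{\mu} = \int \varphi(y(f(\zeta))) (V \circ f)_\xi(\zeta)\, d\zeta$, then integrate by parts / use that $(V\circ f)(\zeta) = V(f(\zeta))$ as absolutely continuous functions and substitute, reducing to $\int \varphi(y(\xi)) V_\xi(\xi)\, d\xi = \int \varphi\, d\mu$; this packages the measure-theoretic subtleties into a single application of the change of variables for absolutely continuous monotone functions.
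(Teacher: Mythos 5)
The paper does not prove this proposition at all: it is imported verbatim from \cite[Prop.~2.1.10]{PhdThesisNordli}, so there is no in-paper argument to compare against. Your direct, component-by-component verification is correct and is the standard argument one would expect the cited reference to give: the $u$-component is invariant because $f$ is a bijection, so the fibres $\{\zeta : y(f(\zeta))=x\}$ and $\{\xi: y(\xi)=x\}$ correspond under $f$; the measures are invariant by the chain rule for compositions of Lipschitz functions together with the change of variables $\xi=f(\zeta)$, both of which are legitimate here since conditions (i)--(ii) of Definition~\ref{def:RelabelingGroup} make $f$ bi-Lipschitz and increasing (indeed the paper's own Theorems~\ref{thm:chainRule} and~\ref{thm:changeVar} are exactly the tools needed, and your alternative of testing the pushforwards against $\varphi\in C_c(\R)$ is a clean way to handle general Borel sets).

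One small inaccuracy worth fixing: in the parenthesis you assert that $f_\xi>0$ a.e.\ ``follows from $f_\xi\geq 0$ and $f$ being a homeomorphism.'' That inference is false as stated — a strictly increasing homeomorphism can have vanishing derivative on a set of positive measure. The correct reason, which you in fact give in your closing paragraph, is that $(f^{-1})_\xi-1\in L^\infty(\R)$ makes $f^{-1}$ Lipschitz, whence $f_\xi$ is bounded away from zero a.e. Note also that strict positivity of $f_\xi$ is not actually needed: the Serrin--Varberg change of variables (Theorem~\ref{thm:changeVar}) and the chain rule with the convention of Theorem~\ref{thm:chainRule} only require $f$ Lipschitz and increasing, so the argument goes through even without that refinement.
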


The pair $(u, \mu)$ contains all the essential information about the solution in Eulerian coordinates, and $(y, U, V)$ therefore encodes the important information in Lagrangian coordinates. Yet, it is apparent from Definition~\ref{def:MapL}, that the choice of $\nu$ not only influences $H$, but also $(y, U, V)$. Despite this, the pair $(u,\mu)$ is independent of $\nu$.

\begin{lemma}[{\cite[Lem. 2.13]{LipschitzAlpha}}]
	Given $(\bar{u}_j, \bar{\mu}_j, \bar{\nu}_j)$ in $\D^{\alpha}$ for $j \in \{1, 2\}$ with $\bar{u}_1 = \bar{u}_2$ and $\bar{\mu}_1 = \bar{\mu}_2$, let $(u_j, \mu_j, \nu_j)(t) = T_t((\bar{u}_j, \bar{\mu}_j, \bar{\nu}_j))$ for $j \in \{1, 2\}$ and any $t \geq 0$,  then 
	\begin{equation*}
		u_1(t, \cdot) = u_2(t, \cdot) \hspace{0.5cm} \text{and} \hspace{0.5cm} \mu_1(t) = \mu_2(t). 
	\end{equation*}
\end{lemma}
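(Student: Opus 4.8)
The plan is to exploit the factorisation $T_t=M\circ S_t\circ L$ and the fact that only the Eulerian output of $M$ is asked to be $\bar\nu$-independent: I want to show that changing $\bar\nu$ alters the Lagrangian data only by a (generalised) change of variables, that the Lagrangian flow respects it, and that $M$ does not see it. First I would reduce to a canonical choice. The triplet $(\bar u,\bar\mu,\bar\mu)$ satisfies all conditions of Definition~\ref{def:EulSet} (the two Radon–Nikodym derivatives there are identically $1$), hence lies in $\D_0^\alpha\subset\D^\alpha$; writing $(u,\mu)$ for its associated solution, it suffices to prove $u_j(t,\cdot)=u(t,\cdot)$ and $\mu_j(t)=\mu(t)$ for $j=1,2$. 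So from now on I fix an arbitrary $(\bar u,\bar\mu,\bar\nu)\in\D^\alpha$, set $\bar X=(\bar y,\bar U,\bar V,\bar H)=L((\bar u,\bar\mu,\bar\nu))$ and $\bar X'=(\bar y',\bar U',\bar V',\bar H')=L((\bar u,\bar\mu,\bar\mu))$, and compare $M\circ S_t(\bar X)$ with $M\circ S_t(\bar X')$.

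The central object is the map $g:=\bar y+\bar V\colon\R\to\R$. Using \eqref{eq:L_eq1}, that $x\mapsto x+\bar\nu((-\infty,x))$ is strictly increasing with generalised inverse $\bar y$, and that $\bar\mu\le\bar\nu$ with $\tfrac{d\bar\mu}{d\bar\nu}>0$, one checks from \eqref{eq:L_eq4} that $g$ is a strictly increasing, Lipschitz, absolutely continuous bijection of $\R$ — on the flat intervals of $\bar y$ (the atoms of $\bar\nu$) strict monotonicity is supplied by $\bar V$, whose slope there is $\tfrac{d\bar\mu}{d\bar\nu}>0$. (It need not belong to the relabelling group $\mathcal G$ of Definition~\ref{def:RelabelingGroup}, because $g^{-1}$ may fail to be Lipschitz where $g_\xi$ degenerates, so the relabelling invariance $M(X\bullet f)=M(X)$ cannot be applied directly.) A direct computation with the generalised inverses in Definition~\ref{def:MapL}, together with $\bar X'\in\F_0^{\alpha,0}$ (so that $\bar V'=\bar H'$), gives the transformation identities
\[
\bar y'\circ g=\bar y,\qquad \bar U'\circ g=\bar U,\qquad \bar V'\circ g=\bar V,
\]
the flat intervals of $\bar y$, i.e.\ the atoms of $\bar\nu$, being accommodated by an affine correction in $g$ there. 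From \eqref{eq:tau} and the identity $\bar U_\xi=\bar u_x(\bar y)\bar y_\xi$ (valid a.e.) one also reads off that the wave-breaking time is invariant under this change of variables, $\tau=\tau'\circ g$: indeed $\tau(\xi)=-2/\bar u_x(\bar y(\xi))$ wherever $\bar y_\xi>0$ and $\bar u_x(\bar y)<0$, and the sets $\{\bar y_\xi=0\}$ correspond to one another under $g$.

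Next I would propagate these identities along the flow. Since the fourth Lagrangian component enters neither \eqref{eq:ODE1}--\eqref{eq:ODE3} nor \eqref{eq:tau}, the triple $(y,U,V)(t)$ is determined by $(\bar y,\bar U,\bar V)$ alone. I would check that $\xi\mapsto\big(y'(t,g(\xi)),U'(t,g(\xi)),V'(t,g(\xi))\big)$ solves the $(y,U,V)$-subsystem \eqref{eq:ODE1}--\eqref{eq:ODE3} with initial data $(\bar y,\bar U,\bar V)$: equations \eqref{eq:ODE1}--\eqref{eq:ODE2} pull back verbatim once $V_\infty(t)=V'_\infty(t)$ is known and \eqref{eq:ODE3} is verified, while \eqref{eq:ODE3} itself follows from the change of variables $\eta'=g(\eta)$ in the integral defining $V'(t,g(\xi))$, using $(\bar V'_\xi\circ g)\,g_\xi=\bar V_\xi$ a.e., $\tau'\circ g=\tau$, and $y'(\tau'(g(\eta)),g(\eta))=y'(\tau(\eta),g(\eta))$ for the breaking locations. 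The well-posedness of \eqref{eq:LagrSystem}--\eqref{eq:tau} recalled in Definition~\ref{def:LagrOp}, i.e.\ uniqueness for this subsystem, then gives $(y,U,V)(t)=\big(y'(t)\circ g,\ U'(t)\circ g,\ V'(t)\circ g\big)$ for every $t\ge0$.

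Finally I would carry the identities back through $M$ (Definition~\ref{def:MapM}). For any $x$, picking $\xi$ with $y(t,\xi)=x$ gives $y'(t,g(\xi))=x$, hence $u(t,x)=U'(t,g(\xi))=U(t,\xi)$, so $u_1(t,\cdot)=u(t,\cdot)$; and for a Borel set $A$, the absolute continuity of $g$ with $V_\xi(t,\cdot)=(V'_\xi(t)\circ g)\,g_\xi$ and $y(t)^{-1}(A)=g^{-1}\big((y'(t))^{-1}(A)\big)$ yields, after substituting $\eta=g(\xi)$,
\[
\mu_1(t)(A)=\int_{y(t)^{-1}(A)}\!\!V_\xi(t,\xi)\,d\xi=\int_{(y'(t))^{-1}(A)}\!\!V'_\xi(t,\eta)\,d\eta=\mu(t)(A).
\]
Repeating the argument with $\bar\nu_2$ in place of $\bar\nu$ and combining proves the claim. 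The principal obstacle is the construction and analysis of $g$: establishing the three transformation identities at $t=0$ with a careful treatment of the atoms and of any singular-continuous part of $\bar\nu$ (which is where the positivity $\tfrac{d\bar\mu}{d\bar\nu}>0$ is used), verifying the regularity of $g$ that makes all the changes of variables legitimate, and — because $g\notin\mathcal G$ — replacing the relabelling invariance by the uniqueness of \eqref{eq:LagrSystem}--\eqref{eq:tau} together with the observation that the fourth Lagrangian component plays no role in the evolution of $(y,U,V)$.
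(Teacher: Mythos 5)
The paper itself does not prove this lemma; it is imported verbatim from \cite[Lem.~2.13]{LipschitzAlpha}, so there is no in-paper argument to compare against. Judged on its own terms, your proposal is essentially correct and follows the natural (and, as far as the cited reference goes, standard) route: reduce to the canonical triplet $(\bar u,\bar\mu,\bar\mu)\in\D_0^\alpha$, relate the two Lagrangian data by the strictly increasing Lipschitz map $g=\bar y+\bar V$, propagate the identities through $S_t$, and observe that $M$ only sees $(y,U,V)$. Two steps deserve to be spelled out if this were written up. First, the identity $\bar y'\circ g=\bar y$ rests on the sandwich $\bar F(\bar y(\xi))\le\bar V(\xi)\le\bar F(\bar y(\xi)+)$, the $V$-analogue of \eqref{eq:ineqG}; this is true and follows from the pushforward identity $\bar y_{\#}\big(\tfrac{d\bar\mu}{d\bar\nu}(\bar y)\bar H_\xi\,d\xi\big)=\bar\mu$ applied to $(-\infty,\bar y(\xi))$ and $(-\infty,\bar y(\xi)]$, but it is not a one-line "direct computation"; once it is in place, your observation $\bar V'\circ g=g-\bar y'\circ g=\bar V$ (using $\bar X'\in\F_0^{\alpha,0}$) is clean, and the a.e.\ identities for the differentiated variables and for $\tau=\tau'\circ g$ follow from Theorem~\ref{thm:chainRule} together with $g_\xi=\bar y_\xi+\bar V_\xi>0$ a.e., which is exactly where Definition~\ref{def:EulSet}(vii) enters (on the whole set $\{\bar y_\xi=0\}$, not only on atoms). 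Second, since $g\notin\mathcal G$, the relabelling invariance cannot be invoked, and your substitute — uniqueness for \eqref{eq:LagrSystem}--\eqref{eq:tau} applied to the candidate $(y'(t)\circ g,U'(t)\circ g,V'(t)\circ g,\bar H)$ — needs the caveat that this candidate need not lie in $\F^\alpha$ for $t>0$ (the lower bound in Definition~\ref{def:LagSet}~\ref{def:LagSet2} may degenerate), so one must either use the uniqueness class coming from the fixed-point construction in \cite[Lem.~2.2.1]{PhdThesisNordli}, or argue as the present paper does for its non-relabelling maps $\phi,\psi$ in Lemma~\ref{lem:commutation}, adapting \cite[Prop.~3.7]{AlphaHS} "up to slight modifications". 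With these points made precise, your argument closes correctly, including the change-of-variables computation for $\mu_1(t)=\mu(t)$, which is legitimate because $g$ is absolutely continuous, increasing, with $g_\xi>0$ a.e.
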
 
As a consequence, we will restrict our attention to initial data belonging to the subset $\D_0^{\alpha} \subset \D^{\alpha}$, defined in \eqref{eq:D0}, which in particular implies that $\bar{\mu} = \bar{\nu}$. 

\subsection{The numerical method}\label{sec:numMethod}

Let us proceed by describing the numerical method proposed in \cite{Alpha2}, which is closely related to  Definition~\ref{def:AlphaSol}, but differs in two ways: 
\begin{enumerate}[label=(\roman*)]
	\item Before mapping the Eulerian initial data to Lagrangian coordinates, one applies the piecewise linear projection operator introduced in \cite[Def. 3.2]{Alpha1}. The $\alpha$-dissipative solution associated with this projected initial data will remain piecewise linear for all $t\geq 0$. This makes it possible to implement the solution operator $T_t$ exactly in the case of $\alpha$ being a fixed constant, see \cite{Alpha1}. However, this is no longer possible when $\alpha \in W^{1, \infty}(\R, [0, 1))$. 
	\item If the $\alpha$-dissipative solution experiences wave breaking along the characteristic labeled by $\xi$, then the fraction of energy to be removed equals $\alpha(y(\tau(\xi), \xi))$. Moreover, since the time evolution of $y(t, \xi)$ depends on $V_{\xi}(t, \cdot)$ on all of $\R$,  cf. \eqref{eq:LagrSystem}, the position at which wave breaking takes place, $y(\tau(\xi), \xi)$, is influenced by all the other wave breaking occurrences happening prior to $t=\tau(\xi)$. Thus the time evolutions of $y(t, \xi)$ and $V_{\xi}(t, \cdot)$ are closely intertwined and \eqref{eq:LagrSystem} cannot be solved explicitly in general. To resolve this issue, one approximates the solution operator $S_t$ numerically by introducing an iteration scheme, which is based on computing successive approximations of the energy to be removed. 
\end{enumerate}

\subsubsection{The projection operator}
We start by recalling the projection operator introduced in \cite[Def. 3.2]{Alpha1}. To this end, let $\{x_j\}_{j \in \mathbb{Z}}$ represent a uniform mesh on $\R$ with $x_j = j\Dx$ for $j \in \mathbb{Z}$ and $\Dx > 0$ fixed. In addition, let $f_j = f(x_j)$ denote the values attained at these gridpoints for any function $f\!:\!\R \rightarrow \R$  and introduce the difference operator 
\begin{equation*}
	Df_{2j} = \frac{f_{2j+2} - f_{2j}}{2\Dx}.  
\end{equation*}
The projection operator $P_{\Dx}$, which is tailored to preserve the total energy $F_{\infty}$, the continuity of $u$, and Definition~\ref{def:EulSet}~\ref{def:EulSet4}, is then defined as follows. 

 \begin{definition}\label{def:ProjOperator}
    Define $P_{\Dx}\! \!:  \!\D_0^{\alpha}\! \rightarrow\! \D_0^{\alpha}$ by $P_{\Delta x} \!\left((u, F, G) \right)\! =\!\!(u_{\Delta x}, F_{\Delta x}, G_{\Delta x})$, where  
    \begin{align}
        u_{\Delta x}(x) &= \begin{cases}
        u_{2j} + \left(Du_{2j} \mp q_{2j} \right)(x-x_{2j}), & x_{2j} < x \leq x_{2j+1}, 
        \\ 
        u_{2j+2} + \left(Du_{2j} \pm q_{2j} \right) \left(x-x_{2j+2} \right) \!, & x_{2j+1} < x \leq x_{2j+2}, 
        \end{cases}
        \label{eq:up}
    \end{align}
    with $q_{2j}$ given by 
    \begin{equation}\label{eq:q2j}
    	q_{2j}:= \sqrt{DF_{\mathrm{ac}, 2j} - (Du_{2j})^2}, 
   \end{equation}
    and
    \begin{align}
        G_{\Delta x}(x) &= F_{\Delta x}(x) = F_{\Delta x, \mathrm{ac}}(x) + F_{\Delta x, \mathrm{sing}}(x), \hspace{0.5cm} \text{for all} x \in \R. 
        \label{eq:Fp}
    \end{align}
    Furthermore, the absolutely continuous part of $F_{\Dx}$ is defined by 
    \begin{align}\label{eq:Facp} 
    F_{\Delta x, \mathrm{ac}}(x) &= \begin{cases} F_{\mathrm{ac}, 2j} + \left(Du_{2j} \mp q_{2j}\right)^2 \left(x - x_{2j} \right)\!, & x_{2j} < x \leq x_{2j+1}, 
    \\ 
    \frac{1}{2}\left(F_{\mathrm{ac}, 2j+2} + F_{\mathrm{ac}, 2j}\right) \mp 2Du_{2j} q_{2j}\Delta x \\ \quad + \left(Du_{2j} \pm q_{2j} \right)^2(x-x_{2j+1}), & x_{2j+1} < x \leq x_{2j+2},
    \end{cases}
    \end{align}
    while the singular part is given by 
    \begin{equation}\label{eq:Fsp}
        F_{\Delta x, \mathrm{sing}}(x) = F_{2j+2} - F_{\mathrm{ac}, 2j+2}= F_{\mathrm{sing}, 2j+2}, \hspace{0.8cm}  x_{2j} < x \leq x_{2j+2}.
    \end{equation}
\end{definition}

Note that there is one degree of freedom for each interval $[x_{2j}, x_{2j+2}]$, that is, we have to determine which sign to use in \eqref{eq:up} and \eqref{eq:Facp}. This choice will not influence our upcoming analysis, but it affects the numerical accuracy in practical applications. For the numerical experiments in Section~\ref{sec:NumericalExp}, we therefore couple Definition~\ref{def:ProjOperator} with the sign-selection criterion from \cite[Sec. 3.1]{Alpha1}. This criterion is based on picking, for each $[x_{2j}, x_{2j+2}]$, the sign that minimizes the distance between $u(x_{2j+1})$ and $u_{\Dx}(x_{2j+1})$, which is given by $(-1)^{k_{2j}}$ for $[x_{2j}, x_{2j+1}]$, where 
\begin{equation*}
	k_{2j} := \argmin_{m \in \{0, 1\}} \left \{ \Big(\frac{u_{2j+1}-u_{2j}}{\Dx} - Du_{2j}\Big)\Dx + (-1)^{m+1}q_{2j}\Dx \right \} \!. 
\end{equation*}

\subsubsection{Implementation of $L$ and computing $\tau_{\Dx}$}
After applying $L$ from Definition~\ref{def:MapL} to the projected data, one obtains the numerical Lagrangian initial data
\begin{equation*}
	X_{\Dx} = (y_{\Dx}, U_{\Dx}, V_{\Dx}, H_{\Dx}) =  L \circ P_{\Dx}((u, \mu, \nu)).
\end{equation*}
Due to Definition~\ref{def:MapL} and Definition~\ref{def:ProjOperator}, this is a tuple of continuous and piecewise linear functions, whose nodes are located at the points $\{\xi_j\}_{j \in \mathbb{Z}}$. The value attained by $X_{\Dx}(\xi)$ at any $\xi \in \R$ can therefore be computed by linear interpolation based on the sequence $\{X_{\Dx}(\xi_j)\}_{j \in \mathbb{Z}}$. Here $\{\xi_j\}_{j \in \mathbb{Z}}$ represents a nonuniform discretization in Lagrangian coordinates, which is related to the uniform mesh $\{x_j\}_{j \in \mathbb{Z}}$ in Eulerian coordinates via
\begin{align}\label{eq:lagrPoints}
	\xi_{3j} &= x_{2j} + G_{\Dx}(x_{2j}),  \hspace{1.6cm} \xi_{3j+1} = x_{2j} + G_{\Dx}(x_{2j}+), \nonumber \\
	\xi_{3j+2} &= x_{2j+1} + G_{\Dx}(x_{2j+1}), \hspace{1.cm} \xi_{3j+3} = x_{2j+2} + G_{\Dx}(x_{2j+2}).
\end{align} 
Here $x_{2j}+$ denotes the limit from the right at $x_{2j}$. 

Furthermore, by inserting $X_{\Dx}$ into \eqref{eq:tau}, one obtains the numerical wave breaking function $\tau_{\Dx}\!: \R \rightarrow [0, \infty]$, which, for $\xi \in [\xi_{3j}, \xi_{3j+3})$, reads
\begin{align}\label{eq:numBreaking}
	\tau_{\Dx}(\xi) = \begin{cases}
	\tau_{3j+\frac{1}{2}}, & \xi \in [\xi_{3j}, \xi_{3j+1}], \\
	\tau_{3j+\frac{3}{2}}, & \xi \in (\xi_{3j+1}, \xi_{3j+2}], \\
	\tau_{3j+\frac{5}{2}}, & \xi \in (\xi_{3j+2}, \xi_{3j+3}),
	\end{cases}
\end{align}
where
\begin{equation*}
	\tau_{j+\frac{1}{2}} = \tau_{\Dx}\Big(\frac{1}{2}(\xi_{j+1}+ \xi_{j})\Big) \hspace{0.45cm} \text{for any } j \in \mathbb{Z}. 
\end{equation*}
In particular, $\xi_{3j} = \xi_{3j+1}$ whenever $G_{\Dx}$ does not have a jump discontinuity at $x=x_{2j}$ and, in addition, $\tau_{3j+\frac{1}{2}} = 0$.  By comparing \eqref{eq:tau} and \eqref{eq:numBreaking} we infer that the exact and numerical wave breaking times in general differ, and this complicates the upcoming error analysis. The interested reader is referred to \cite[Sec. 3.2.1--3.2.2]{Alpha1} for more details. 

\subsubsection{A brief recap of the time evolution}\label{Sec:OutlineSt}
The numerical time evolution is throughly described in \cite[Sec. 3.2.2]{Alpha2}, we therefore only give a brief summary. 

One starts by extracting a finite, increasing sequence of numerical breaking times, $\{\tau_k^*\}_{k=0}^N$ from $\{\tau_{j+\frac12}\}_{j\in \mathbb{Z}}$, which constitutes a non-uniform temporal discretization of $[0, T]$, for any finite $T\! > 0$. Thereafter, one evolves the differentiated Lagrangian variables, $X_{\Dx, \xi}$, between successive times in this sequence by an iteration scheme. This turns out to be much more efficient than solving \eqref{eq:LagrSystem} directly with initial data $\bar{X}_{\Dx}= L \circ P_{\Dx}((\bar{u}, \bar{\mu}, \bar{\nu}))$, which is possible as the exact Lagrangian solution, with initial data $\bar{X}_{\Dx}$,  remains piecewise linear for all $t \geq 0$. 

For the upcoming analysis, the particular values of the $\tau_k^*$'s do not matter. The important point is that $\{\tau_k^*\}_{k =0 }^N$ ensures that the iteration scheme, which we now define inductively, is a contraction and hence converges. 

Suppose
\begin{equation*}
	\tilde{X}_{\Dx, \xi}(t) = (\zeta_{\Dx, \xi}, U_{\Dx, \xi}, V_{\Dx, \xi}, H_{\Dx, \xi})(t), \hspace{0.35cm} \text{where} \hspace{0.35cm} \zeta_{\Dx}(\xi) = y_{\Dx}(\xi) - \xi, 
\end{equation*}
has been computed for $t \in [0, \tau_k^*]$ and denote by $\{\tilde{X}_{\Dx, \xi}^n(t, \cdot) \}_{n \in \mathbb{N}}$ the sequence whose time evolution, for $\xi \in \R \setminus \{\xi_j\}_{j \in \mathbb{Z}}$ and $t \in (\tau_k^*, \tau_{k+1}^*]$, is given by 
\vspace{-0.05cm}
\begin{subequations}\label{eq:iterationSeq}
\begin{align}
	\zeta_{\Dx, t \xi}^{n}(t, \xi) &=  U_{\Dx, \xi}^{n}(t, \xi), \\
	U_{\Dx, t \xi}^{n}(t, \xi) &= \frac{1}{2}V_{\Dx, \xi}^{n}(t, \xi), \\
	V_{\Dx, \xi}^{n}(t, \xi)  &= \big(1 - \beta_{\Dx}^{n}(t, \xi) \chi_{\{s\mid s\geq \tau_{\Dx}(\xi) > \tau_k^*\}}(t)\big)V_{\Dx, \xi}(\tau_k^*, \xi), \\
	H_{\Dx, t \xi}^{n}(t, \xi) &= 0, 
\end{align}
\end{subequations}
with $\tilde{X}_{\Dx, \xi}^{n}(\tau_k^*) = \tilde{X}_{\Dx, \xi}(\tau_k^*)$ for all $n\geq 1$, where
\begin{equation}\label{eq:beta1}
	\beta^1_{\Dx}(t, \xi) = 0 \hspace{0.65cm} \text{for } \xi \in \R, 
\end{equation}
and for $n\geq 2$, 
\begin{equation}\label{eq:beta2}
	\beta_{\Dx}^{n}(t, \xi) = \beta_{j+\frac{1}{2}}^{n}(t) \hspace{0.65cm} \text{for any} \xi \in (\xi_j, \xi_{j+1}],
\end{equation}
with 
\begin{align}\label{eq:beta3}
	\beta_{j+\frac{1}{2}}^{n}(t) &= \begin{cases}
		0, & \tau_{j+\frac{1}{2}} \notin (\tau_k^*, \tau_{k+1}^*], \\
		\alpha(y_{\Dx}^{n-1}(\tau_{k+1}^*, \xi_j)), &\tau_{j+\frac{1}{2}} \in (\tau_k^*, \tau_{k+1}^*].
\end{cases}
\end{align}
Since $\tilde{X}_{\Dx, \xi}^n(t,\cdot)$ is piecewise constant with possible jump discontinuities at $\{\xi_j\}_{j \in \mathbb{Z}}$, for all $t \in [\tau_k^*, \tau_{k+1}^*]$, it suffices to solve \eqref{eq:iterationSeq} at the midpoints  $\xi_{j+\frac{1}{2}} \!=\!\frac{1}{2}(\xi_{j} + \xi_{j+1})$. Moreover, it is evident from \eqref{eq:iterationSeq}--\eqref{eq:beta3} that we need to compute $y_{\Dx}^n(t, \cdot)$ and hence $X_{\Dx}^n(t, \cdot)$ at the times $\{\tau_k^*\}_{k=0}^N$. To this end, introduce for each $j \in \mathbb{Z}$
\begin{align*}
	\big(\zeta_{j+\frac{1}{2}\hspace{-0.05cm}, \hspace{0.05cm} \xi}^n, U_{j+\frac{1}{2}\hspace{-0.05cm}, \hspace{0.05cm} \xi}^n, V_{j+\frac{1}{2}\hspace{-0.05cm}, \hspace{0.05cm} \xi}^n, H_{j+\frac{1}{2}\hspace{-0.05cm}, \hspace{0.05cm} \xi}^n\big)(t) &= \tilde{X}_{j+\frac{1}{2}\hspace{-0.05cm}, \hspace{0.05cm} \xi}^n(t) 
	\\ &= \begin{cases}
	\tilde{X}_{\Dx\hspace{-0.025cm}, \hspace{0.05cm} \xi}^n\Big(t, \frac{1}{2}(\xi_{j+1} + \xi_j)\Big),  & \text{if } \xi_j \neq \xi_{j+1}, \\
	(0, 0, 0, 0) & \text{otherwise}, 
	\end{cases}
\end{align*}
and observe that $H_{\Dx}^{n}(t) = \bar{H}_{\Dx}$.  In order to compute $V_{\Dx}^{n}(t)$, $U_{\Dx}^{n}(t)$, and $y_{\Dx}^{n}(t)$, we use the recursive procedure from \cite[Sec. 3.2.2]{Alpha2}. It follows from \eqref{eq:ODE3} that$\lim_{\xi \rightarrow -\infty}V_{\Dx}^{n}(t, \xi) = 0$ and one therefore has
\begin{equation*}
	V_{\Dx}^{n}(t, \xi_j) = \sum_{m =-\infty}^{j-1} \! \!V_{m+\frac{1}{2}\hspace{-0.05cm}, \hspace{0.05cm}\xi}^{n}(t)(\xi_{m+1} - \xi_m) = V_{\Dx}^{n}(t, \xi_{j-1}) + V_{j-\frac{1}{2}\hspace{-0.05cm}, \hspace{0.05cm} \xi}^{n}(t)(\xi_j - \xi_{j-1}). 
\end{equation*}
The left asymptotes of $U_{\Dx}^{n}(t)$ and $y_{\Dx}^{n}(t)$ on the other hand, are time-dependent and evolve according to \eqref{eq:LagrSystem}. For brevity, let us introduce $f_{\pm \infty} = \lim_{\xi \rightarrow \pm \infty} f(\xi)$, then 
\begin{align*}
	U_{\Dx, -\infty}^{n}(t) &= U_{\Dx, -\infty}(\tau_k^*)  - \frac{1}{4}V_{\Dx, \infty}(\tau_k^*)(t-\tau_k^*) 
	\\ & \quad + \frac{1}{4}\sum_{m \in \mathbb{Z}} \!\beta_{m+\frac{1}{2}}^{n}(t)\chi_{\{l \mid t \geq \tau_{l+\frac{1}{2}} > \tau_k^* \}}(m)V_{m + \frac{1}{2}\hspace{-0.05cm}, \hspace{0.05cm}\xi}(\tau_k^*)(\xi_{m+1} - \xi_m) (t - \tau_{m+\frac{1}{2}}),
\end{align*}
while 
\begin{align*}
	\zeta_{\Dx, -\infty}^{n}(t)&= \zeta_{\Dx, -\infty}(\tau_k^*) + U_{\Dx, -\infty}(\tau_k^*)(t-\tau_k^*) - \frac{1}{8}V_{\Dx, -\infty}(\tau_k^*)(t- \tau_k^*)^2
	\\ & \quad \hspace{-0.2cm}+ \frac{1}{8}\sum_{m \in \mathbb{Z}} \! \beta_{m+\frac{1}{2}}^{n}(t)\chi_{\{l \mid t \geq \tau_{l+\frac{1}{2}} > \tau_k^* \}}(m)V_{m + \frac{1}{2}\hspace{-0.05cm}, \hspace{0.05cm} \xi}(\tau_k^*)(\xi_{m+1} - \xi_m) (t-\tau_{m+\frac{1}{2}})^2. 
\end{align*}
From these asymptotes we compute the functions $U_{\Dx}^{n}(t)$ and $\zeta_{\Dx}^{n}(t)$ at the gridpoints $\{\xi_j \}_{j \in \mathbb{Z}}$ and thereafter recover their value at any intermediate point $\xi$ by linear interpolation. In particular,  
\begin{align*}
	U_{\Dx}^{n}(t, \xi_j) &= U_{\Dx, -\infty}^{n}(t) + \sum_{m=-\infty}^{j-1}U_{m+\frac{1}{2}\hspace{-0.05cm}, \hspace{0.05cm}\xi}^n(t)(\xi_{m+1}-\xi_m) 
	\\ &= U_{\Dx}^{n}(t, \xi_{j-1}) + U_{j-\frac{1}{2}\hspace{-0.05cm}, \hspace{0.05cm} \xi}^n(t)(\xi_j - \xi_{j-1}),
\end{align*}
and
\begin{align*}
	\zeta_{\Dx}^{n}(t, \xi_j) &= \zeta_{\Dx, -\infty}^{n}(t) + \sum_{m=-\infty}^{j-1}\zeta_{m+\frac{1}{2}\hspace{-0.05cm}, \hspace{0.05cm} \xi}^n(t)(\xi_{m+1}-\xi_m) 
	\\ &= \zeta_{\Dx}^{n}(t, \xi_{j-1}) + \zeta_{j-\frac{1}{2}\hspace{-0.05cm}, \hspace{0.05cm} \xi}^n(t)(\xi_j - \xi_{j-1}).
\end{align*}
At last, one computes $y_{\Dx}^{n}(t, \cdot)$ from $y_{\Dx}^{n}(t, \xi_j) = \zeta_{\Dx}^{n}(t, \xi_j) + \xi_j$ for any $j \in \mathbb{Z}$.  

The iteration scheme is stopped at the first integer $n=M_{\mathrm{it}}^k$ satisfying
\begin{align}\label{eq:stopCond}
	\sup_{t \in [\tau_k^*, \tau_{k+1}^*]} \! \|y_{\Dx}^{M_{\mathrm{it}}^k}(t) - y_{\Dx}^{M_{\mathrm{it}}^k-1}(t)\|_{\infty} & \!\leq \! \sup_{t \in [\tau_k^*, \tau_{k+1}^*]} \! \| \{\zeta_{\Dx}^{M_{\mathrm{it}}^k}(t, \xi_j)\}_{j \in \mathbb{Z}} - \{\zeta_{\Dx}^{M_{\mathrm{it}}^k-1}(t, \xi_j)\}_{j \in \mathbb{Z}}\|_{\ell^{\infty}} \nonumber
	\\ & \leq \epsilon, 
\end{align}
for some user-determined threshold $\epsilon > 0$, where $\|\cdot \|_{\ell^{\infty}}$ denotes the usual sup-norm on $\ell^{\infty}$. As mentioned, the iteration scheme is contractive. Thus \eqref{eq:stopCond} will eventually be met for any $\epsilon > 0$, see \cite[Prop. 3.5]{Alpha2}. However, we will stick with the same choice as in \cite{Alpha2}, that is, 
\begin{equation*}
	\epsilon = \frac{1}{\|\alpha'\|_{\infty}}\Dx^2, 
\end{equation*}
in which case the number of iterates $M_{\mathrm{it}}^k$ in \eqref{eq:stopCond}, over any $(\tau_k^*, \tau_{k+1}^*]$, is uniformly bounded by $3$. Furthermore, set 
\begin{equation*}
	X_{\Dx}(t) = X_{\Dx}^{M_{\mathrm{it}}^k}(t) \hspace{0.5cm} \text{for all } t \in (\tau_k^*, \tau_{k+1}^*],
\end{equation*}
and let $S_{\Dx, t}: \F^{\alpha} \rightarrow \F^{\alpha}$ denote the numerical solution operator associated with the outlined evolution procedure, namely   
\begin{equation}\label{eq:numSol}
	S_{\Dx, t}(\bar{X}_{\Dx}) := X_{\Dx}(t), \hspace{0.5cm} \text{for any} t \in [0, T]. 
\end{equation}
The important point here is that $S_t$ from Definition~\ref{def:LagrOp} is not implemented exactly, but approximated by $S_{\Dx, t}$. 
As a consequence, we introduce two numerical errors on each time interval $(\tau_k^*, \tau_{k+1}^*]$, which we analyze in Section~\ref{sec:FirstAttempt}.

\subsubsection{Recovering the solution in Eulerian coordinates}
It remains to recover the numerical solution in Eulerian coordinates. One achieves this by applying $M$ from Definition~\ref{def:MapM} to $X_{\Dx}(t)$, namely 
\begin{equation}\label{eq:recoveringNum}
	(u_{\Dx}, \mu_{\Dx}, \nu_{\Dx})(t) = M(X_{\Dx}(t)). 
\end{equation}
As the components of $X_{\Dx}(t)$ are piecewise linear, so are the components of $(u_{\Dx}, F_{\Dx}, G_{\Dx})(t)$. In fact, their nodes are situated at the points $\{y_{\Dx}(t, \xi_j)\}_{j \in \mathbb{Z}}$ such that  \eqref{eq:recoveringNum} amounts to applying a piecewise linear reconstruction based on the values attained at these nodes. Furthermore, $u_{\Dx}(t)$ is continuous for all $t \geq 0$, while $F_{\Dx}(t)$ and $G_{\Dx}(t)$ are left-continuous and increasing. In addition, $F_{\Dx}(t) \neq G_{\Dx}(t)$ for $t>0$ in general. Again, see \cite[Sec. 3.2.4]{Alpha1} for more details. 

The outlined construction leads to the following definition of numerical $\alpha$-dissipative solutions. 

\begin{definition}\label{def:NumericalSol}
	Given $(\bar{u}, \bar{\mu}, \bar{\nu}) \! \in \D_0^{\alpha}$, $\Dx\! > \!0$, and $T>0$, define $T_{\Dx, t} = M \circ S_{\Dx, t} \circ L$. The numerical $\alpha$-dissipative solution, for any time $t \in [0, T]$, is then given by 
	\begin{equation*}
		(u_{\Dx}, \mu_{\Dx}, \nu_{\Dx})(t) = T_{\Dx, t} \circ P_{\Dx}((\bar{u}, \bar{\mu}, \bar{\nu})).
	\end{equation*}
\end{definition}

\section{Convergence rate for the numerical method}\label{sec:rateMethod}
Let  $T \geq 0$ be fixed and assume from now on that $\Dx \leq 1$. We aim at deriving a convergence rate in $C([0, T], L^{\infty}(\R))$ for the approximation family $\{u_{\Dx} \}_{\Dx > 0}$. By \cite[Lem. 4.11]{Alpha1}, it is enough to deduce an error estimate for $(y_{\Dx}-\id, U_{\Dx})(t)$ in $[L^{\infty}(\R)]^2$ for any $t \in [0, T]$,  because
\begin{equation}\label{eq:est_u}
	\|u(t) - u_{\Dx}(t)\|_{\infty} \leq \|U(t) - U_{\Dx}(t)\|_{\infty} + \bar{F}_{\infty}^{\nicefrac{1}{2}} \|y(t) - y_{\Dx}(t)\|_{\infty}^{\nicefrac{1}{2}}. 
\end{equation}

If we for a moment neglect the fact that we have to approximate $S_t$ by $S_{\Dx, t}$, cf. \eqref{eq:numSol}, and  integrate \eqref{eq:ODE2} with respect to time, we observe that in order to obtain a convergence rate for $\|U(t) - U_{\Dx}(t)\|_{\infty}$, we need a convergence rate for
\begin{align}\label{eq:trouble}
	\bigg|\frac{1}{2} \int_0^t \!\Big(\big(V(s, \xi) &- \frac{1}{2}V_{\infty}(s)\big) - \big(V_{\Dx}(s, \xi) - \frac{1}{2}V_{\Dx, \infty}(s) \big) \Big)ds \bigg| \nonumber \\
	& \hspace{-1.2cm}= \frac{1}{4}\Big |\!\int_0^t \! \Big(\int_{-\infty}^{\xi}\!(V_{\xi}- V_{\Dx, \xi})(s, \eta)d\eta - \int_{\xi}^{\infty}\!(V_{\xi}- V_{\Dx, \xi})(s, \eta)d\eta \Big)ds \Big |.
\end{align}
This is a delicate issue, because there might be a set $\mathcal{A}$ of positive measure in Lagrangian coordinates on which we have no initial rate for $(y_{\Dx, \xi}, U_{\Dx, \xi}, V_{\Dx, \xi}, H_{\Dx, \xi})$, which makes it difficult to obtain a convergence rate for \eqref{eq:trouble}. To resolve this issue, we introduce, in Section~\ref{sec:AdditionalChange}, a novel pair of maps, inspired by \cite[Def. 6]{NVWE}, that enables us to prove that the contribution from the set $\mathcal{A}$ to \eqref{eq:trouble} tends to $0$ with a certain rate when $\Dx \rightarrow 0$, provided $\bar{u}_{x}$ has more regularity than just belonging to $L^2(\R)$.  

\subsection{Initial convergence rates in Eulerian coordinates}\label{sec:InitialRatesEuler}
In order to have sufficiently strong initial convergence in Eulerian coordinates, one has to impose additional regularity on $\bar{u}_x$. In particular, we will use a regularity criterion which is inspired by a characterizing property of certain Besov spaces. 

 Let us start by recalling the following error estimates.  

\begin{prop}[{\cite[Prop. 4.1]{Alpha1}}] \label{prop:ratesPdx} Given $(\bar u, \bar \mu, \bar \nu) \in \D_0^{\alpha}$, let $(\bar u_{\Dx}, \bar \mu_{\Dx}, \bar \nu_{\Dx}) = P_{\Dx}((\bar u, \bar \mu, \bar \nu))$, then\begin{subequations}
	\begin{align}
	\|\bar u - \bar u_{\Dx}\|_{\infty} &\leq \big(1+\sqrt{2}\big) \bar F_{\mathrm{ac}, \infty}^{\nicefrac{1}{2}} \Dx^{\nicefrac{1}{2}}, \label{eq:u1}\\
	\|\bar u - \bar u_{\Dx}\|_2 &\leq \sqrt{2}\big(1+ \sqrt{2}\big) \bar F_{\mathrm{ac}, \infty}^{\nicefrac{1}{2}} \Dx, \\ \label{est:Fp}
	\|\bar F - \bar F_{\Dx}\|_p &\leq 2\bar F_{\infty}\Dx^{\nicefrac{1}{p}} \hspace{0.4cm} \text{for } p = 1, 2, \\
	 \|\bar G - \bar G_{\Dx}\|_p &\leq 2\bar G_{\infty}\Dx^{\nicefrac{1}{p}} \hspace{0.4cm} \text{for } p =1, 2. 
	\end{align}
	\end{subequations}
\end{prop}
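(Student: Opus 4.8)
The plan is to exploit that $P_{\Dx}$ of Definition~\ref{def:ProjOperator} acts \emph{locally}: each component is modified only inside a single interval $[x_{2j}, x_{2j+2}]$ at a time, and a direct check from \eqref{eq:up}, \eqref{eq:Facp} and \eqref{eq:Fsp} shows that $\bar u_{\Dx}$, $\bar F_{\Dx}$ and $\bar G_{\Dx}$ reproduce the nodal values $\bar u_{2j}$, $\bar F_{2j}$, $\bar G_{2j}$ at every even gridpoint. Hence all four estimates reduce to a bound on a single interval followed by a summation over $j \in \mathbb{Z}$. I abbreviate $E_j := \int_{x_{2j}}^{x_{2j+2}} \bar u_x^2\, dx = \bar\mu_{\mathrm{ac}}([x_{2j}, x_{2j+2}))$, using Definition~\ref{def:EulSet}~\ref{def:EulSet4}, so that $2\Dx\, DF_{\mathrm{ac},2j} = E_j$ and $\sum_j E_j = \bar F_{\mathrm{ac},\infty}$, and $m_j := \bar F_{2j+2} - \bar F_{2j} = \bar\mu([x_{2j}, x_{2j+2}))$, so that $\sum_j m_j = \bar F_\infty$.

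For \eqref{eq:u1} I would, on the left half $[x_{2j}, x_{2j+1}]$, anchor at $x_{2j}$ and, on the right half $[x_{2j+1}, x_{2j+2}]$, anchor at $x_{2j+2}$; at each of these nodes $\bar u$ and $\bar u_{\Dx}$ coincide. On the left half,
\[
	|\bar u(x) - \bar u_{\Dx}(x)| \leq |\bar u(x) - \bar u_{2j}| + |\bar u_{\Dx}(x) - \bar u_{2j}| \leq \Dx^{\nicefrac{1}{2}} E_j^{\nicefrac{1}{2}} + |Du_{2j} \mp q_{2j}|\,\Dx,
\]
where the first term comes from Cauchy--Schwarz and the second by reading the slope off \eqref{eq:up}. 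Since $|Du_{2j}| \leq DF_{\mathrm{ac},2j}^{\nicefrac{1}{2}}$ (again Cauchy--Schwarz) and $q_{2j} \leq DF_{\mathrm{ac},2j}^{\nicefrac{1}{2}}$ by \eqref{eq:q2j}, the slope term is at most $2\Dx\, DF_{\mathrm{ac},2j}^{\nicefrac{1}{2}} = \sqrt{2}\,(\Dx E_j)^{\nicefrac{1}{2}}$, so $|\bar u - \bar u_{\Dx}| \leq (1+\sqrt{2})(\Dx E_j)^{\nicefrac{1}{2}}$ on the left half, and symmetrically on the right half. Bounding $E_j \leq \bar F_{\mathrm{ac},\infty}$ and taking the supremum over $j$ gives \eqref{eq:u1}. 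For the $L^2$-bound I would square this intervalwise $L^\infty$-bound, multiply by the interval length $2\Dx$, and sum: $\|\bar u - \bar u_{\Dx}\|_2^2 \leq 2(1+\sqrt{2})^2\Dx^2 \sum_j E_j = 2(1+\sqrt{2})^2 \bar F_{\mathrm{ac},\infty}\Dx^2$.

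For the bounds on $\bar F$ (and $\bar G$) I would use monotonicity rather than smoothness. Both $\bar F$ and $\bar F_{\Dx}$ are nondecreasing, left-continuous, and agree at the even nodes, so on each interval $[x_{2j}, x_{2j+2})$ both take values in $[\bar F_{2j}, \bar F_{2j+2}]$, whence $|\bar F(x) - \bar F_{\Dx}(x)| \leq m_j$ there. Integrating gives $\|\bar F - \bar F_{\Dx}\|_1 \leq \sum_j 2\Dx\, m_j = 2\bar F_\infty\Dx$, while $\|\bar F - \bar F_{\Dx}\|_2^2 \leq \sum_j 2\Dx\, m_j^2 \leq 2\Dx\,(\sup_j m_j)\sum_j m_j \leq 2\bar F_\infty^2\Dx$, which yields the claimed estimates (in fact with the slightly better constant $\sqrt{2}$ when $p = 2$). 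Since $(\bar u, \bar\mu, \bar\nu) \in \D_0^{\alpha}$ we have $\bar\nu = \bar\mu$ and $\bar G_{\Dx} = \bar F_{\Dx}$, so the $\bar G$-estimates are identical.

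The one genuinely delicate point is the sharp constant $1 + \sqrt{2}$ in \eqref{eq:u1}: it hinges on anchoring each half-interval at the endpoint where $\bar u$ and $\bar u_{\Dx}$ agree -- so that the Cauchy--Schwarz term contributes only $(\Dx E_j)^{\nicefrac{1}{2}}$ -- and on controlling the two piecewise-linear slopes through the identity $(Du_{2j})^2 + q_{2j}^2 = DF_{\mathrm{ac},2j}$, which is exactly where the structural relation $d\bar\mu_{\mathrm{ac}} = \bar u_x^2\,dx$ of $\D_0^{\alpha}$ and the defining choice \eqref{eq:q2j} of $q_{2j}$ enter. The monotone-comparison step for $\bar F$ is routine once nodal agreement is checked.
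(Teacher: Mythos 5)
Your proof is correct: the cellwise reduction is justified because $\bar u_{\Dx}$, $\bar F_{\Dx}$, $\bar G_{\Dx}$ indeed reproduce the exact nodal values at the even gridpoints, the Cauchy--Schwarz bounds $|Du_{2j}|\leq DF_{\mathrm{ac},2j}^{\nicefrac{1}{2}}$ and $q_{2j}\leq DF_{\mathrm{ac},2j}^{\nicefrac{1}{2}}$ give exactly the constant $1+\sqrt{2}$ in \eqref{eq:u1} and hence the $L^2$-bound, and the monotone-comparison argument with $m_j=\bar\mu([x_{2j},x_{2j+2}))$ yields the $\bar F$- and $\bar G$-estimates (even with a slightly better constant for $p=2$). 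The present paper does not reprove this proposition but imports it from \cite[Prop.~4.1]{Alpha1}, and your local, interval-by-interval verification is essentially the same argument used there, so there is nothing substantive to add.
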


Additionally, we also have $\bar u_{\Dx, x} \rightarrow \bar u_x$ in $L^2(\R)$ by \cite[Lem. 4.2]{Alpha1}. However,  as there is a close relation between $\bar u_{\Dx, x}$ and  the energy variable $\bar V_{\Dx, \xi}$, this convergence is too weak to guarantee a convergence rate for \eqref{eq:trouble} and hence for $(y_{\Dx}-\id, U_{\Dx})(t)$ in $[L^{\infty}(\R)]^2$. In fact,  it turns out that it is essential to have a bound of the form 
\begin{equation*}
\|\bar u_x - \bar u_{\Dx, x}\|_2 \leq \mathcal{O}(\Dx^{\gamma})\quad \text { for some } \quad \gamma > 0,  
\end{equation*}
because the solution operator $S_{\Dx, t}$ will otherwise completely deteriorate the initial convergence rate in \eqref{eq:u1} for $\alpha$-dissipative solutions due to the following reason. The characteristics along which wave breaking occurs at time $t$ are determined by the sets
\begin{equation}\label{eq:setsux}
	E_t := \Big\{\xi \!\mid \bar{u}_{x}(\bar y(\xi))) = -\frac{2}{t} \Big\} \hspace{0.35cm} \text{and} \hspace{0.35cm} E_{\Dx, t} :=  \Big\{\xi \!\mid \bar{u}_{\Dx, x}(\bar y_{\Dx}(\xi)) = -\frac{2}{t} \Big\}, 
\end{equation}
for $(\bar u,\bar F,\bar G)$ and $(\bar u_{\Dx}, \bar F_{\Dx}, \bar G_{\Dx})$, respectively.  Thus, to prevent the error between the exact and the numerical Lagrangian solution from becoming too large, one needs to be able to monitor the influence of $E_t$ and $E_{\Dx, t}$ for $t \in (0, T]$. This is possible if we have the stronger $L^2$-error bound on $\bar u_{\Dx, x}$. 

Note that $\bar u_{\Dx}$, given by \eqref{eq:up}, is piecewise linear such that we are approximating $\bar u_x \in L^2(\R)$ by a piecewise constant function and additional regularity has to be imposed in order to obtain a convergence rate in $L^2(\R)$. Motivated by the {\em Lipschitz spaces} $\mathrm{Lip}(\beta, L^2(\R))$, which coincide with the {\em Besov spaces} $ B_{\infty}^{\beta}(L^2(\R))$ for $\beta \in (0, 1)$, see e.g., \cite{NonlinearApprox, ConstructiveApprox},  we introduce, for $\beta \in (0, 1]$,  
\begin{equation}\label{eq:lipSpace}
	B_2^{\beta} := \big \{f \! \in L^2(\R)\! \mid \text{there exists } C\! > 0 \text{ s.t. } \|\delta_hf\|_2 \leq Ch^{\beta} \text{ for all } h \in (0, 2]\! \big \}. 
\end{equation}
Here $\delta_hf(x) := f(x+h) - f(x)$. Furthermore, denote by $|f|_{2, \beta}$ the smallest admissible constant in \eqref{eq:lipSpace}, i.e.,  
\begin{equation}\label{eq:semiNorm}
	|f|_{2, \beta} := \sup_{h \in (0, 2]}\! \!\Big( h^{-\beta} \|\delta_hf \|_2  \Big). 
\end{equation} 
Then the set $B_2^\beta$, which satisfies $B_{\infty}^\beta(L^2(\R))\subset B_2^{\beta}$, contains all the $L^2(\R)$-functions that optimally can be approximated  by piecewise constant functions on uniform meshes with an error of order $\mathcal{O}(\Dx^{\beta})$, see \cite[Sec. 3]{NonlinearApprox} and \cite[Chp. 12]{ConstructiveApprox}. However, the projection operator $P_{\Dx}$ is not constructed with the aim of approximating $\bar u_x$ by $\bar u_{\Dx, x}$ in the best possible way in $L^2(\mathbb{R})$, but instead with focus on preserving the relation $d\bar \mu_{\mathrm{ac}}= \bar u_x^2dx$. One therefore introduces $q_{2j}$, given by \eqref{eq:q2j}, which can be viewed as a correction term in \eqref{eq:up}, but it leads to a suboptimal convergence rate.

\begin{lemma}[{\cite[Lem. 3.2]{AlphaRate}}]\label{lem:rateux}
	Given $(\bar u, \bar \mu, \bar \nu) \in \D_0^{\alpha}$ with $\bar u_x \in B_2^{\beta}$ for some $\beta \in (0, 1]$,  let  $(\bar u_{\Dx}, \bar \mu_{\Dx}, \bar \nu_{\Dx}) = P_{\Dx}((\bar u, \bar \mu, \bar \nu))$. Then there exists a constant $\tilde{C}$, dependent on $\beta$, $\bar F_{\mathrm{ac}, \infty}$, and $|\bar u_x|_{2, \beta}$, such that
    	\begin{equation}\label{est:rateux}
        \|\bar u_x - \bar u_{\Delta x, x} \|_2 \leq \tilde{C}\Delta x^{\nicefrac{\beta}{2}}.
            \end{equation}
\end{lemma}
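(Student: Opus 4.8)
The plan is to split each interval $[x_{2j}, x_{2j+2}]$ into its two halves and estimate the $L^2$-discrepancy between $\bar u_x$ and the piecewise-constant function $\bar u_{\Dx, x}$ on each half separately, then sum over $j$. On $(x_{2j}, x_{2j+1}]$ the derivative $\bar u_{\Dx, x}$ equals the constant $Du_{2j} \mp q_{2j}$, and on $(x_{2j+1}, x_{2j+2}]$ it equals $Du_{2j} \pm q_{2j}$, where $Du_{2j}$ is the centered difference quotient of $\bar u$ over $[x_{2j}, x_{2j+2}]$ and $q_{2j} = \sqrt{DF_{\mathrm{ac}, 2j} - (Du_{2j})^2}$. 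The first step is to write
\begin{equation*}
	\|\bar u_x - \bar u_{\Dx, x}\|_2^2 = \sum_{j \in \mathbb{Z}} \Big( \int_{x_{2j}}^{x_{2j+1}} (\bar u_x - (Du_{2j} \mp q_{2j}))^2\, dx + \int_{x_{2j+1}}^{x_{2j+2}} (\bar u_x - (Du_{2j} \pm q_{2j}))^2\, dx \Big),
\end{equation*}
and to compare, on each such subinterval, the constant $Du_{2j}$ with the local average $\frac{1}{\Dx}\int_{x_{2j}}^{x_{2j+1}} \bar u_x\, dx$ (and similarly over the right half). Since $2\Dx\, Du_{2j} = \bar u_{2j+2} - \bar u_{2j} = \int_{x_{2j}}^{x_{2j+2}} \bar u_x\, dx$, the constant $Du_{2j}$ is the average of $\bar u_x$ over the full cell $[x_{2j}, x_{2j+2}]$, so the differences between $Du_{2j}$ and the half-cell averages are themselves controlled by oscillations of $\bar u_x$ over scale $\Dx$.

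The main decomposition is: on a generic subinterval $I$ of length $\Dx$ with endpoints among the $x_j$, write $\bar u_x - \mathrm{const} = (\bar u_x - \langle \bar u_x\rangle_I) + (\langle \bar u_x\rangle_I - \mathrm{const})$. The first term is a mean-oscillation term, bounded in $L^2(I)$ by a multiple of $\|\delta_h \bar u_x\|_{L^2}$ with $h \sim \Dx$, hence by $|\bar u_x|_{2,\beta}\Dx^{\beta}$ after summing over cells (this is the standard Whitney/Besov estimate: $\sum_j \int_I (\bar u_x - \langle\bar u_x\rangle_I)^2 \le \frac{1}{\Dx}\int_0^{\Dx}\|\delta_h \bar u_x\|_2^2\, dh \le |\bar u_x|_{2,\beta}^2 \Dx^{2\beta}$, using $\Dx\le 1$ so $h\le 2$). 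The second term, the difference between the half-cell average and the chosen constant, reduces to controlling $q_{2j}$ together with the mismatch between the half-cell average and the full-cell average $Du_{2j}$; the latter mismatch is again an oscillation term of the same order. So everything hinges on bounding $\sum_j \Dx\, q_{2j}^2 = \sum_j \Dx(DF_{\mathrm{ac},2j} - (Du_{2j})^2)$.

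Here is where the relation $d\bar\mu_{\mathrm{ac}} = \bar u_x^2\, dx$ enters: $2\Dx\, DF_{\mathrm{ac}, 2j} = \bar F_{\mathrm{ac}, 2j+2} - \bar F_{\mathrm{ac}, 2j} = \int_{x_{2j}}^{x_{2j+2}} \bar u_x^2\, dx$, which is $2\Dx$ times the average of $\bar u_x^2$ over the cell, while $(Du_{2j})^2$ is the square of the average of $\bar u_x$. Thus $q_{2j}^2 = \langle \bar u_x^2\rangle - \langle \bar u_x\rangle^2 = \langle (\bar u_x - \langle \bar u_x\rangle)^2\rangle$ is exactly the variance of $\bar u_x$ over the cell $[x_{2j}, x_{2j+2}]$, so $\Dx\, q_{2j}^2 = \frac{1}{2}\int_{x_{2j}}^{x_{2j+2}}(\bar u_x - Du_{2j})^2\, dx$ — again a mean-oscillation term at scale $2\Dx$, summable to $\mathcal{O}(\Dx^{2\beta})$ by the $B_2^\beta$-bound. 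Collecting all pieces gives $\|\bar u_x - \bar u_{\Dx,x}\|_2^2 \le \tilde C^2 \Dx^{2\beta}$ with $\tilde C$ depending only on $\beta$, $|\bar u_x|_{2,\beta}$, and (through absolute constants arising when $2\Dx$-oscillations are re-expressed via $\Dx$-oscillations, or via $\bar F_{\mathrm{ac},\infty}$ as a crude fallback bound on $q_{2j}$) on $\bar F_{\mathrm{ac},\infty}$. The only real subtlety — the part I expect to take the most care — is the bookkeeping that converts oscillations at scale $2\Dx$ into the seminorm $|\bar u_x|_{2,\beta}$ defined via increments $\delta_h$ with $h\in(0,2]$, and making sure the half-cell-versus-full-cell average mismatches are absorbed cleanly rather than producing an uncontrolled cross term; the sign ambiguity $\mp$ vs. $\pm$ is irrelevant since $q_{2j}^2$ appears squared in every bound.
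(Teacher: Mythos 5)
Your proposal is correct, and it in fact proves more than Lemma~\ref{lem:rateux} asserts. The paper does not prove this lemma internally but cites \cite{AlphaRate}; judging from the stated rate $\Dx^{\nicefrac{\beta}{2}}$, the dependence of $\tilde C$ on $\bar F_{\mathrm{ac},\infty}$, and the surrounding remark that the correction $q_{2j}$ ``leads to a suboptimal convergence rate'', that argument treats $q_{2j}$ in a lossier way (presumably paying a Cauchy--Schwarz factor of order $\bar F_{\mathrm{ac},\infty}^{\nicefrac{1}{2}}$, which costs a square root in the rate). Your route avoids this loss: since $Du_{2j}$ and $DF_{\mathrm{ac},2j}$ are the cell averages of $\bar u_x$ and $\bar u_x^2$, the identity $2\Dx\, q_{2j}^2=\int_{x_{2j}}^{x_{2j+2}}(\bar u_x-Du_{2j})^2\,dx$ is exact, so the pointwise bound $|\bar u_x-\bar u_{\Dx,x}|\le|\bar u_x-Du_{2j}|+q_{2j}$ gives $\int_{x_{2j}}^{x_{2j+2}}(\bar u_x-\bar u_{\Dx,x})^2\,dx\le 4\int_{x_{2j}}^{x_{2j+2}}(\bar u_x-Du_{2j})^2\,dx$, and the Whitney-type estimate (legitimate since $2\Dx\le 2$) yields $\|\bar u_x-\bar u_{\Dx,x}\|_2\le 4\sqrt{2}\,|\bar u_x|_{2,\beta}\Dx^{\beta}$, which implies \eqref{est:rateux} because $\Dx\le 1$, with a constant that needs neither $\bar F_{\mathrm{ac},\infty}$ nor the $\nicefrac{\beta}{2}$ loss. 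Two cosmetic points: the detour through half-cell averages and the cross-term bookkeeping you worry about are unnecessary, since $(a+b)^2\le 2a^2+2b^2$ absorbs everything and the sign choice is indeed irrelevant; and your Whitney constant is off by harmless factors of $2$ (the clean form is $\sum_j\int_{I_j}(f-\langle f\rangle_{I_j})^2\,dx\le \tfrac{2}{L}\int_0^L\|\delta_h f\|_2^2\,dh$ for disjoint cells of length $L=2\Dx$). Finally, note that your stronger bound, combined with $\|\delta_{2\Dx}\bar u_x^2\|_1=\mathcal{O}(\Dx^{\beta})$, would upgrade the projection-error contribution in Theorem~\ref{thm:ConvRateLagr} from $\Dx^{\nicefrac{\beta}{2}}$ to $\Dx^{\beta}$ and hence the $\beta$-dependent term in Theorem~\ref{thm:ConvRateEuler} from $\Dx^{\nicefrac{\beta}{4}}$ to $\Dx^{\nicefrac{\beta}{2}}$ (the $\Dx^{\nicefrac{1}{8}}$ local-error term is unaffected), so the suboptimality appears to be an artifact of the cited proof rather than of the projection $P_{\Dx}$ itself.
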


\begin{remark}\label{remark:beta}
Other more frequently encountered conditions are $\bar u_x \in \mathrm{BV}\cap L^1(\R)$ or $\bar u_x \in C_c^{0, \beta}(\R)$, i.e, $\bar u_x$ being $\beta$-H{\"o}lder continuous with compact support, which both lead to error estimates similar to \eqref{est:rateux}, because
\begin{equation*}
	BV\cap L^1(\R) \subset B_2^{\nicefrac{1}{2}} \hspace{0.55cm} \text{and} \hspace{0.55cm} 
	C_c^{0, \beta} \subset B_2^{\beta} \text{ for any } \beta \in (0, 1]. 
\end{equation*}
On the other hand, we only require $\bar u_x\in L^2(\R)$ and hence these two other conditions exclude a lot of interesting examples. One being the solution to the Cauchy problem \eqref{eq:HS} with the following initial data
 \begin{align*}
	\bar{u}(x) &= \begin{cases}
	1, &x < -1, \\
	\left|x\right|^{\nicefrac{2}{3}}\!, & -1\leq x \leq1, \\
	1, & 1 < x,
	\end{cases}
\end{align*}
for which wave breaking occurs for each $t\in [0,3]$ along a single characteristic, see \cite[Ex. 5.2]{Alpha1} and \cite[Ex. 3]{NumericalConservative}. 
Its derivative, given by 
\begin{align*}
	\bar{u}_{x}(x) &= \begin{cases}
	0, & x < -1, \\
	\frac{2}{3} \mathrm{sgn}(x)\left |x \right|^{-\nicefrac{1}{3}}\!, & -1 \leq x \leq 1, \\
	0, & 1 < x, \end{cases}
\end{align*} 
belongs neither to $C_c^{0, \beta}(\R)$ for any $\beta \in (0, 1]$ nor to $\mathrm{BV}(\R)$, but it does belong to $B_2^{\nicefrac{1}{6}}$. We will return to this example in Section~\ref{sec:NumericalExp}. 
\end{remark}

\subsection{A first step towards a convergence rate in Lagrangian coordinates}\label{sec:FirstAttempt}
The error estimates from Proposition~\ref{prop:ratesPdx} yield the following convergence rates in Lagrangian coordinates. 

\begin{lemma}[{\cite[Lem. 4.4]{Alpha1}}]\label{lem:initialRatesLagr}
	Suppose $(\bar u, \bar \mu, \bar \nu) \in \D_0^{\alpha}$, let $(\bar y, \bar U, \bar V, \bar H) = L((\bar u, \bar \mu, \bar \nu))$, and $(\bar y_{\Dx}, \bar U_{\Dx}, \bar V_{\Dx}, \bar H_{\Dx})= L \circ P_{\Dx}((\bar u, \bar \mu, \bar \nu))$, then 
	\begin{align*}
		\|\bar y - \bar y_{\Dx} \|_{\infty} & \leq 2\Dx, \\
		\|\bar U - \bar U_{\Dx} \|_{\infty} & \leq \big(1 + 2\sqrt{2} \big) \bar F_{\mathrm{ac}, \infty}^{\nicefrac{1}{2}} \Dx^{\nicefrac{1}{2}}, \\
		\|\bar H -\bar  H_{\Dx} \|_{\infty} &= \|\bar V - \bar V_{\Dx}\|_{\infty} \leq 2\Dx. 
	\end{align*}
\end{lemma}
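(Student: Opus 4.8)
The plan is to reduce all three $\mathcal{O}(\Dx)$-type bounds to the single estimate $\|\bar{y}-\bar{y}_{\Dx}\|_\infty\le 2\Dx$ and then read off the estimate for $\bar{U}$. Since $(\bar{u},\bar{\mu},\bar{\nu})\in\D_0^{\alpha}$ we have $\bar{\mu}=\bar{\nu}$, so $\tfrac{d\bar{\mu}}{d\bar{\nu}}\equiv 1$ and \eqref{eq:L_eq4} collapses to $\bar{V}(\xi)=\int_{-\infty}^{\xi}\bar{H}_{\xi}(\eta)\,d\eta=\bar{H}(\xi)$, using that $\bar{H}$ is increasing and Lipschitz with $\lim_{\xi\to-\infty}\bar{H}(\xi)=0$; since $P_{\Dx}$ maps $\D_0^{\alpha}$ into itself, the same identity holds for the projected data, $\bar{V}_{\Dx}=\bar{H}_{\Dx}$. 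Hence $\|\bar{H}-\bar{H}_{\Dx}\|_\infty=\|\bar{V}-\bar{V}_{\Dx}\|_\infty$, and since $\bar{H}(\xi)=\xi-\bar{y}(\xi)$, $\bar{H}_{\Dx}(\xi)=\xi-\bar{y}_{\Dx}(\xi)$ by \eqref{eq:L_eq3}, the first and third assertions follow at once from $\|\bar{y}-\bar{y}_{\Dx}\|_\infty\le 2\Dx$.

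For that estimate the first step is to record the key structural property of $P_{\Dx}$: unwinding \eqref{eq:Facp}--\eqref{eq:Fsp}, and using that the correction $q_{2j}$ in \eqref{eq:q2j} is chosen precisely so that $\bar{F}_{\Dx,\mathrm{ac}}$ redistributes the absolutely continuous mass inside each cell $[x_{2j},x_{2j+2}]$ while interpolating $\bar{F}_{\mathrm{ac}}$ at the even nodes, one obtains $\bar{F}_{\Dx,\mathrm{ac}}(x_{2j})=\bar{F}_{\mathrm{ac}}(x_{2j})$ and $\bar{F}_{\Dx,\mathrm{sing}}(x_{2j})=\bar{F}_{\mathrm{sing}}(x_{2j})$ for every $j$; consequently $\bar{G}_{\Dx}(x_{2j})=\bar{F}_{\Dx}(x_{2j})=\bar{F}(x_{2j})=\bar{G}(x_{2j})$ on the coarse grid $\{x_{2j}\}_{j\in\mathbb{Z}}$, whose spacing is $2\Dx$. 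The second step is a monotonicity sandwich. Write $g(x)=x+\bar{G}(x)$ and $g_{\Dx}(x)=x+\bar{G}_{\Dx}(x)$, so that by \eqref{eq:L_eq1} one has $\bar{y}(\xi)=\sup\{x\mid g(x)<\xi\}$ and $\bar{y}_{\Dx}(\xi)=\sup\{x\mid g_{\Dx}(x)<\xi\}$, with $g(x_{2j})=g_{\Dx}(x_{2j})$ by step one. If $\bar{y}_{\Dx}(\xi)-\bar{y}(\xi)>2\Dx$ for some $\xi$, pick an even node $x_{2j}\in(\bar{y}(\xi),\bar{y}_{\Dx}(\xi))$; since $x_{2j}<\bar{y}_{\Dx}(\xi)$, the definition of the supremum gives some $x'>x_{2j}$ with $g_{\Dx}(x')<\xi$, whence $g(x_{2j})=g_{\Dx}(x_{2j})\le g_{\Dx}(x')<\xi$ by monotonicity, forcing $\bar{y}(\xi)\ge x_{2j}$ --- a contradiction. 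The symmetric argument excludes $\bar{y}(\xi)-\bar{y}_{\Dx}(\xi)>2\Dx$, and so $\|\bar{y}-\bar{y}_{\Dx}\|_\infty\le 2\Dx$.

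It remains to estimate $\bar{U}-\bar{U}_{\Dx}$. For each $\xi$, \eqref{eq:L_eq2} gives
\[
	|\bar{U}(\xi)-\bar{U}_{\Dx}(\xi)|=|\bar{u}(\bar{y}(\xi))-\bar{u}_{\Dx}(\bar{y}_{\Dx}(\xi))|\le|\bar{u}(\bar{y}(\xi))-\bar{u}(\bar{y}_{\Dx}(\xi))|+\|\bar{u}-\bar{u}_{\Dx}\|_\infty .
\]
By \eqref{eq:u1} the second term is at most $(1+\sqrt{2})\bar{F}_{\mathrm{ac},\infty}^{\nicefrac{1}{2}}\Dx^{\nicefrac{1}{2}}$. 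For the first term, Definition~\ref{def:EulSet}~\ref{def:EulSet4} gives $\|\bar{u}_x\|_2=\bar{F}_{\mathrm{ac},\infty}^{\nicefrac{1}{2}}$, so $\bar{u}$ is $\tfrac{1}{2}$-H\"older continuous with constant $\bar{F}_{\mathrm{ac},\infty}^{\nicefrac{1}{2}}$ (Cauchy--Schwarz applied to $\bar{u}(a)-\bar{u}(b)=\int_a^b\bar{u}_x$), and combining this with $|\bar{y}(\xi)-\bar{y}_{\Dx}(\xi)|\le 2\Dx$ yields $\sqrt{2}\,\bar{F}_{\mathrm{ac},\infty}^{\nicefrac{1}{2}}\Dx^{\nicefrac{1}{2}}$; adding the two contributions gives the claimed $(1+2\sqrt{2})\bar{F}_{\mathrm{ac},\infty}^{\nicefrac{1}{2}}\Dx^{\nicefrac{1}{2}}$. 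I expect the main obstacle to be the first step of the $\bar{y}$-estimate: verifying that the intricate construction in Definition~\ref{def:ProjOperator} --- the interplay of the correction term $q_{2j}$, the absolutely continuous part \eqref{eq:Facp}, and the cell-wise freezing of the singular part in \eqref{eq:Fsp} --- genuinely leaves $\bar{G}$ unchanged at the even gridpoints; everything after that is routine bookkeeping plus the elementary inverse-function sandwich, which uses only monotonicity and the definition of the supremum, so the jump discontinuities of $\bar{G}$ and $\bar{G}_{\Dx}$ cause no difficulty.
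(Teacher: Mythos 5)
Your proof is correct and uses essentially the same ingredients as the cited proof of this lemma ([Alpha1, Lem.~4.4]) and as the paper itself records elsewhere: the node identity $\bar G_{\Dx}(x_{2j})=\bar G(x_{2j})$ from Definition~\ref{def:ProjOperator} (which the paper restates around \eqref{eq:coincideEnd}), monotonicity of $x\mapsto x+\bar G(x)$ in \eqref{eq:L_eq1}, the identity $\bar V=\bar H=\id-\bar y$ on $L(\D_0^{\alpha})=\F_0^{\alpha,0}$, and the Eulerian bound \eqref{eq:u1} combined with the $\nicefrac{1}{2}$-H\"older continuity of $\bar u$ with constant $\|\bar u_x\|_2=\bar F_{\mathrm{ac},\infty}^{\nicefrac{1}{2}}$. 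The only cosmetic difference is that you get $\|\bar y-\bar y_{\Dx}\|_\infty\leq 2\Dx$ by a lattice/contradiction argument, whereas the standard argument notes $\bar y(\xi_{3j})=x_{2j}=\bar y_{\Dx}(\xi_{3j})$ and uses monotonicity cell by cell; the two are equivalent.
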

Here we took advantage of the fact that $L(\D_0^{\alpha}) = \F_0^{\alpha, 0}$, with $\F_0^{\alpha, 0}$ defined in \eqref{eq:subsetsF}. 

Ideally, we would like to proceed by deriving convergence rates for the differentiated Lagrangian variables $\bar X_{\Dx, \xi} = (\bar y_{\Dx, \xi}, \bar U_{\Dx, \xi}, \bar V_{\Dx, \xi}, \bar H_{\Dx, \xi})$, but this is in general out of reach. Instead, we have to apply an additional change of variables in order to be able to handle \eqref{eq:trouble}. 

To better understand what type of transformation we are looking for, we start by outlining the approach we will take for further estimating \eqref{eq:est_u}. As mentioned, the solution operator $S_t$ from Definition~\ref{def:LagrOp} has to be approximated by the numerical operator $S_{\Dx, t}$,  which is based on evolving the numerical solution between successive times in $\{\tau_k^*\}_{k=0}^N$ by the iteration scheme described in Section~\ref{Sec:OutlineSt}. Consequently, one introduces two errors over each interval $[\tau_k^*, \tau_{k+1}^*]$ that we collectively refer to as the \emph{local error}: 
\begin{enumerate}[label=(\roman*)]
	\item the iteration scheme is terminated after a finite number of iterations, once \eqref{eq:stopCond} is satisfied with $\epsilon = \frac{1}{\|\alpha'\|_{\infty}}\Dx^2$, 
	\item the amount of energy to be removed at wave breaking is only approximated, cf.  \eqref{eq:beta1}--\eqref{eq:beta3}. 
\end{enumerate}

Therefore, introduce 
\begin{equation*}
	\widehat{X}_{\Dx}(t) = (\widehat{y}_{\Dx}, \widehat{U}_{\Dx}, \widehat{V}_{\Dx}, \widehat{H}_{\Dx})(t) = S_t(\bar{X}_{\Dx}).
\end{equation*}
It then follows from \eqref{eq:est_u} that we can write
\begin{align*}
	\|u(t) - u_{\Dx}(t)\|_{\infty} & \leq \|U(t) - \widehat{U}_{\Dx}(t)\|_{\infty} + \|\widehat{U}_{\Dx}(t) - U_{\Dx}(t)\|_{\infty}
	\\ & \qquad + \bar{G}_{\infty}^{\nicefrac{1}{2}}\|y(t) - \widehat{y}_{\Dx}(t)\|_{\infty}^{\nicefrac{1}{2}} + \bar{G}_{\infty}^{\nicefrac{1}{2}}\|\widehat{y}_{\Dx}(t) - y_{\Dx}(t)\|_{\infty}^{\nicefrac{1}{2}}, 
\end{align*}
where $\bar{F}_{\infty}=\bar{G}_{\infty}$, since $(\bar{u}, \bar{\mu}, \bar{\nu}) \in \D_0^{\alpha}$. A closer look reveals that the terms  $\|U(t) - \widehat{U}_{\Dx}(t)\|_{\infty}$ and $\|y(t) - \widehat{y}_{\Dx}(t)\|_{\infty}$ describe the {\em projection error}, i.e., the error introduced by the projection operator $P_{\Dx}$ and its influence on the time evolution in Lagrangian coordinates, while the other terms account for the accumulated effect of the aforementioned local errors. 

The local error has already been studied in detail in \cite{Alpha2} and a careful inspection of the estimates in \cite[(4.29) and Lem. 4.10]{Alpha2} reveals that there exists a constant $C$, dependent on $\|\alpha'\|_{\infty}$, $\|\bar u\|_\infty$,  $\bar G_{\infty}$, and  $T$, such that 
\begin{align}\label{eq:localErrLagr}
	 \|\widehat{y}_{\Dx}(t) - y_{\Dx}(t)\|_{\infty} &+ \|\widehat{U}_{\Dx}(t) - U_{\Dx}(t)\|_{\infty}  \leq C\Dx^{\nicefrac{1}{4}}, 
\end{align}
holds for all $t \in [0, T]$. Thus 
\begin{align}\label{eq:est_uNoLocal}
	\|u(t) - u_{\Dx}(t)\|_{\infty} &\leq \|U(t) - \widehat{U}_{\Dx}(t)\|_{\infty}  + \bar G_{\infty}^{\nicefrac{1}{2}}\|y(t) - \widehat{y}_{\Dx}(t)\|_{\infty}^{\nicefrac{1}{2}} \nonumber 
	\\ & \qquad + C\Dx^{\nicefrac{1}{4}} + \sqrt{C\bar G_{\infty}} \Dx^{\nicefrac{1}{8}},
\end{align}
for all $t\in [0,T]$. Or, in other words, the local error restricts the rate of convergence to be at best $\mathcal{O}(\Dx^{\nicefrac{1}{8}})$. However, it still remains to establish an error estimate for the projection error, i.e., for 
\begin{equation}\label{convrate:rest}
\|U(t) - \widehat{U}_{\Dx}(t)\|_{\infty} \quad \text{ and } \quad \|y(t) - \widehat{y}_{\Dx}(t)\|_{\infty}.
\end{equation} 

\vspace{0.1cm}
Integrating \eqref{eq:ODE1} with respect to time, immediately reveals that 
\begin{equation*}
	\|y(t) - \hat y_{\Dx}(t)\|_{\infty} \leq \|\bar{y} - \bar{y}_{\Dx}\|_{\infty} + \int_0^t \|U(s) - \hat U_{\Dx}(s)\|_{\infty}ds.
\end{equation*}
As Lemma~\ref{lem:initialRatesLagr} provides an upper bound for the first term on the right hand side and $t\leq T$, it suffices to derive a convergence rate for $\sup_{t\in [0,T]} \|U(t) - \hat U_{\Dx}(t)\|_{\infty}$.
Integrating \eqref{eq:ODE2} and using \eqref{eq:trouble} yields
\begin{align}
		\|U(t) - \hat U_{\Dx}(t)\|_{\infty} & \leq \|\bar{U} - \bar{U}_{\Dx}\|_{\infty} \nonumber
		\\ & \hspace{-0.15cm}+ \frac{1}{4} \bigg|\!\int_0^t\! \! \Big ( \int_{-\infty}^{\xi} \! \!\big(V_{\xi} - \hat V_{\Dx, \xi} \big)(s,\eta)d\eta -  \int_{\xi}^{\infty} \! \!\big(V_{\xi} - \hat V_{\Dx, \xi} \big)(s, \eta)d\eta \Big)ds \bigg|. \label{tow:conv1}
	\end{align}
Again, Lemma~\ref{lem:initialRatesLagr} provides us with an estimate for the first term on the right hand side and it is therefore enough to obtain a convergence rate for the last term on the right hand side, which coincides with \eqref{eq:trouble}. It will become apparent, in the next two sections, that the convergence order of \eqref{eq:trouble}, and hence also \eqref{eq:est_uNoLocal} and \eqref{convrate:rest}, heavily depends on the inequality from Lemma~\ref{lem:rateux}, and hence on the set $B_2^\beta$ that $\bar u_x$ belongs to. 

\subsection{The additional change of variables}\label{sec:AdditionalChange}
To establish a convergence rate for \eqref{eq:trouble}, we will apply an additional change of variables, which we study throughly in this section.

Let us start by observing that wave breaking occurs, cf. \eqref{eq:tau}, for the initial Lagrangian coordinate $\bar X$ on the following set
\begin{equation}\label{eq:S} 
	\mathcal{S} := \{\xi \in \R \! \mid \bar y_{\xi}(\xi) = 0 \}\! \! = \{\xi \in \R \! \mid \bar V_{\xi}(\xi) = 1 \}, 
\end{equation}
while it occurs for $\bar{X}_{\Dx}$, if we also recall \eqref{eq:numBreaking}, on the set
\begin{equation}\label{eq:Sdx}
	\mathcal{S}_{\Dx} := \{\xi \in \R \!\mid  \bar y_{\Dx, \xi}(\xi) = 0 \}\! \! = \{\xi \in \R \! \mid \bar V_{\Dx, \xi}(\xi) = 1 \} = \bigcup_{j \in \mathbb{Z}}[\xi_{3j}, \xi_{3j+1}]. 
\end{equation}
Furthermore, let $B=\bar y(\mathcal{S})$, then the proof of \cite[Thm. 27]{AlphaCH} reveals that 
\begin{equation}\label{eq:nuac}
	\bar \nu_{\mathrm{ac}} =\bar  \nu\lvert_{B^c} \hspace{0.5cm} \text{and} \hspace{0.5cm} \bar \nu_{\mathrm{sing}} = \bar \nu \lvert_{B}, 
\end{equation}
where $\bar \nu\lvert_B$ denotes the restriction of $\bar \nu$ to the set $B$, i.e., for any Borel set $A$, $\bar \nu\lvert_B (A) = \bar \nu(A \cap B)$. Hence,  
\begin{equation*}
	\mathrm{meas}(\mathcal{S}) = \bar \nu_{\mathrm{sing}}(\R).
\end{equation*}
Similarly, let $B_{\Dx}=\bar y_{\Dx}(\mathcal{S}_{\Dx})$, then 
\begin{equation}\label{eq:nudac}
	\bar \nu_{\Dx,\mathrm{ac}} =\bar  \nu_{\Dx}\lvert_{B_{\Dx}^c} \hspace{0.5cm} \text{and} \hspace{0.5cm} \bar \nu_{\Dx,\mathrm{sing}} = \bar \nu_{\Dx} \lvert_{B_{\Dx}}, 
\end{equation}
and, by Definition~\ref{def:ProjOperator}, we have 
\begin{equation*}
\mathrm{meas}(\mathcal{S}_{\Dx}) = \bar \nu_{\Dx, \mathrm{sing}}(\R)= \bar \nu_{\mathrm{sing}}(\R)=\mathrm{meas}(\mathcal{S}).
\end{equation*}
 
If 
\begin{equation}\label{SSDx}
\mathcal{S}^c= \mathcal{S}_{\Dx}^c,
\end{equation}
then the estimates for $\bar X_\xi-\bar X_{\Dx,\xi}$ on $\mathcal{S}^c$ can be played back to \eqref{est:rateux}, because on $\mathcal{S}^c$, the differentiated Lagrangian variables $\bar{X}_{\xi}$ can all be related to $\bar{u}_x$ and similarly for $\bar{X}_{\Dx, \xi}$ and $\bar{u}_{\Dx, x}$, see e.g., the proof of \cite[Prop. 2.1.5]{PhdThesisNordli}. Moreover, if \eqref{SSDx} holds, then $\bar{X}_{\xi} = (0, 0, 1, 1) = \bar{X}_{\Dx, \xi}$ a.e. on $\mathcal{S}$ and hence there is no contribution from $\bar X_\xi-\bar X_{\Dx, \xi}$ on $\mathcal{S}$. However,  \eqref{SSDx} does not hold in general, but there exists a rescaling of $\bar y$ and $\bar y_{\Dx}$ such that 
\begin{equation} \label{eq:mapCoincide}
	\bar y(\phi(r))=\bar y_{\Dx}(\psi(r)) \quad \text{ for all } r\in \mathbb{R}. 
\end{equation}
In analogy to the mappings $\mathcal{X}$ and $\mathcal{Y}$, defined in \cite[Def. 6]{NVWE} for the nonlinear variational wave equation, we introduce $\phi$ and $\psi$ as follows.

\begin{definition}\label{def:maps}
	Given $(\bar{u}, \bar{\mu}, \bar{\nu}) \in \D_0^{\alpha}$, let $(\bar{y}, \bar{U}, \bar{V}, \bar{H}) = L( (\bar{u}, \bar{\mu}, \bar{\nu}))$ and $(\bar{y}_{\Dx}, \bar{U}_{\Dx}, $  $\bar{V}_{\Dx}, \bar{H}_{\Dx}) = L \circ P_{\Dx}( (\bar{u}, \bar{\mu}, \bar{\nu}))$. Then the functions $\phi: \R\to \R$ and $\psi:\R\to \R$ are given by  
\begin{subequations}\label{eq:maps}
\begin{align}
	\phi(r)\! &= \sup \{\xi\! \mid \bar y(\xi') < \bar y_{\Dx}(2r - \xi')  \text{ for all } \xi'<\xi\}, \label{eq:map_chi}\\
	\psi(r)\! &= 2r - \phi(r). \label{eq:map_psi}
\end{align}
\end{subequations}
\end{definition}

Definition~\ref{def:maps} has a geometrical interpretation: as $\bar y$ and $\bar y_{\Dx}$ are continuous and increasing, the value attained by $\phi(r)$ corresponds, for each $r \in \R$, to the smallest intersection point between $\bar y(\xi)$ and $\bar y_{\Dx}(2r-\xi)$. Moreover, since $\bar G_{\Dx}(x_{2j}) = \bar G(x_{2j})$ by Definition~\ref{def:ProjOperator}, Definition~\ref{def:MapL} implies 
\begin{equation}\label{eq:coincideEnd}
\bar y(\xi_{3j}) = x_{2j} = \bar y_{\Dx}(\xi_{3j})\quad \text{ for all }j \in \mathbb{Z}, 
\end{equation}
which together with Definition~\ref{def:maps} yields  
\begin{equation}\label{eq:coincideGrid}
	[\phi(\xi_{3j}), \phi(\xi_{3j+3})) = [\xi_{3j}, \xi_{3j+3}) = [\psi(\xi_{3j}), \psi(\xi_{3j+3})) \hspace{0.75cm} \text{for } j \in \mathbb{Z}. 
\end{equation}

It therefore suffices to study the behavior of $\phi$ and $\psi$ on the isolated Lagrangian gridcell, $[\xi_{3j}, \xi_{3j+3})$. As a starting point, we look at two particular cases. 

As in \eqref{eq:nuac}, let $B=\bar y(\mathcal{S})$ and introduce 
\begin{equation}\label{def:SSDxj}
\mathcal{S}_{j}:=\mathcal{S}\cap [\xi_{3j}, \xi_{3j+3})\quad \text{ and } \quad \mathcal{S}_{\Dx, j}:= \mathcal{S}_{\Dx}\cap[\xi_{3j}, \xi_{3j+3})=[\xi_{3j},\xi_{3j+1}), 
\end{equation}
which, due to \eqref{eq:Fsp} and \eqref{eq:nuac}--\eqref{eq:nudac}, satisfy
\begin{equation}\label{size:SSDx}
\mathrm{meas}(\mathcal{S}_{j})=\bar \nu_{\mathrm{sing}}\big([x_{2j}, x_{2j+2}) \big)=\bar  \nu_{\Dx, \mathrm{sing}} \big([x_{2j}, x_{2j+2}) \big)= \mathrm{meas}(\mathcal{S}_{\Dx, {j}}).
\end{equation}

{\it Case 1:} If $B \cap [x_{2j}, x_{2j+2}) = \emptyset$, then $\mathcal{S}_{j}= \emptyset=\mathcal{S}_{\Dx, j}$ by Definition~\ref{def:ProjOperator} and \eqref{size:SSDx}, which in turn implies that $\bar y$ and $\bar y_{\Dx}$ are strictly increasing on $[\xi_{3j}, \xi_{3j+3})$. Thus, as depicted in Figure~\ref{fig:mapChiStrictly},  $\bar y(\xi)$ and $\bar y_{\Dx}(2r-\xi)$ have a unique intersection point for each $r \in [\xi_{3j}, \xi_{3j+3})$, given by $\phi(r)$, and the functions $\phi(r)$ and $\psi(r)$ are strictly increasing on $[\xi_{3j}, \xi_{3j+3})$.   

{\it Case 2:} If $B\cap [x_{2j}, x_{2j+2}) \neq \emptyset$ and $\bar \nu_{\mathrm{sing}}$ is purely discrete on $[x_{2j}, x_{2j+2})$, that is, $\bar \nu_{\mathrm{sing}}([x_{2j}, x_{2j+2}))=\bar \nu_{\mathrm{d}}([x_{2j}, x_{2j+2}))$, then $\bar y_{\Dx}$ is constant on the interval $[\xi_{3j}, \xi_{3j+1}]$, cf. \eqref{eq:Sdx}. Thus, by Definition~\ref{def:maps}, 
\begin{equation*}
\phi(r)=\xi_{3j} \quad \text{ and } \quad \psi(r)=\xi_{3j}+ 2(r-\xi_{3j}) \quad \text{ for all } r\in \big[\xi_{3j}, \tfrac12(\xi_{3j}+\xi_{3j+1}) \big].
\end{equation*}
Moreover, there exists a set $A\subset[\frac12(\xi_{3j}+ \xi_{3j+1}), \xi_{3j+3})$, which can be expressed as the union of at most countably many disjoint closed intervals, all having positive length, such that $\mathrm{meas}(A)=\frac12(\xi_{3j+1}-\xi_{3j})$ and
\begin{equation*}
\dot \phi(r)=2 \quad \text{ and } \quad \dot \psi(r)=0 \quad \text{ for a.e. } r\in {A}.
\end{equation*}
This is illustrated in Figure~\ref{fig:mapChiConst}.

\begin{figure}
	\captionsetup{width=.95\linewidth}
	\includegraphics{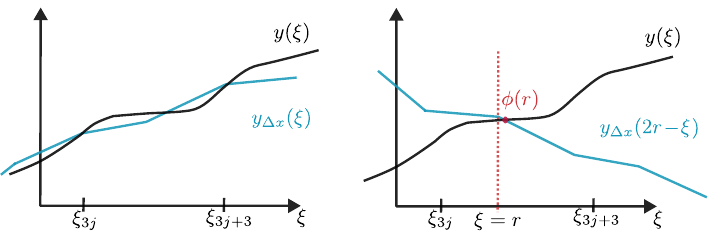}
	\caption{The left picture visualizes $\bar y(\xi)$ and $\bar y_{\Dx}(\xi)$. Since $\bar y$ and $\bar y_{\Dx}$ are strictly increasing on $[\xi_{3j}, \xi_{3j+3})$, $\bar y(\xi)$ and $\bar y_{\Dx}(2r-\xi)$ intersect in the unique point $\phi(r)$, marked by a red dot in the right picture. }
	\label{fig:mapChiStrictly} 
\end{figure}
\begin{figure}
	\captionsetup{width=.95\linewidth}
	\includegraphics{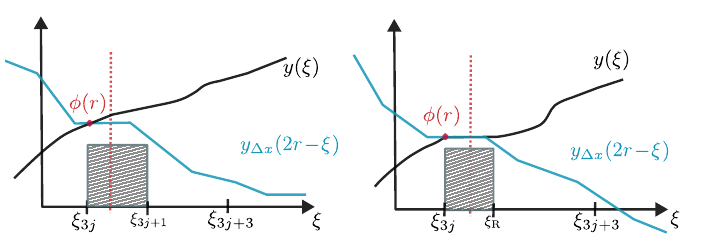}
	\caption{These plots illustrate what happens when $\bar y_{\Dx}(\xi)$ is constant on the interval $[\xi_{3j}, \xi_{3j+1}]$. Then either $\bar y(\xi)$ and $\bar y_{\Dx}(2r - \xi)$ intersect in a unique point, as in the left plot, or they coincide on a whole subinterval $[\xi_{3j}, \xi_{\mathrm{R}}] \subseteq [\xi_{3j}, \xi_{3j+1}]$ as in the right plot.}
	\label{fig:mapChiConst}
\end{figure} 

In general, neither $\phi$ nor $\psi$ will be relabeling functions in the sense of Definition~\ref{def:RelabelingGroup}, because we can only expect them to be increasing and not strictly increasing. Furthermore, it can be shown as in the proof of the well-posedness of \cite[Def. 6]{NVWE} that the pair $(\phi,\psi)$ satisfies the following properties. 

\begin{prop}\label{prop:propertiesmaps}
Let $\phi$ and $\psi$ be given by Definition~\ref{def:maps}, then 
\begin{enumerate}[label=(\roman*)]
	\item $\phi$ and $\psi$ are increasing, 
	\item $\phi-\id$, $\psi-\id \in W^{1, \infty}(\R)$,
	\item $0\leq \dot \phi\leq 2$ a.e. and $0\leq \dot \psi\leq 2$ a.e., 
	\item $\phi(r)+\psi(r)=2r$ for all $r\in \mathbb{R}$.
\end{enumerate} 
\end{prop}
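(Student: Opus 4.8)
The plan is to establish the four properties of the pair $(\phi,\psi)$ by first understanding $\phi$ directly from its definition \eqref{eq:map_chi} and then deriving the corresponding facts for $\psi$ from the identity $\psi(r) = 2r - \phi(r)$. Property (iv) is immediate: it is simply the definition \eqref{eq:map_psi} rewritten, so nothing needs to be proved there. The real content is in (i)--(iii), and among these the monotonicity and boundedness of $\phi$ are the crux; everything about $\psi$ then follows by the linear relation, since $\dot\psi = 2 - \dot\phi$ turns $0 \le \dot\phi \le 2$ into $0 \le \dot\psi \le 2$, and $\phi - \id, \psi - \id \in W^{1,\infty}$ are equivalent statements given (iv).

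First I would reformulate \eqref{eq:map_chi} more geometrically. Since $\bar y$ and $\bar y_{\Dx}$ are continuous and nondecreasing (being the $y$-components of elements of $\F_0^{\alpha,0}$, cf. Definition~\ref{def:LagSet}), the map $\xi \mapsto \bar y(\xi) - \bar y_{\Dx}(2r - \xi)$ is continuous and nondecreasing in $\xi$ for each fixed $r$; hence $\phi(r)$ is exactly the smallest $\xi$ at which this difference becomes $\ge 0$, i.e. $\bar y(\phi(r)) = \bar y_{\Dx}(2r - \phi(r)) = \bar y_{\Dx}(\psi(r))$, which is the asserted relation \eqref{eq:mapCoincide}. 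To prove \emph{monotonicity} of $\phi$, suppose $r_1 < r_2$ and note that for any $\xi$ with $\xi < \phi(r_1)$ we have $\bar y(\xi) < \bar y_{\Dx}(2r_1 - \xi) \le \bar y_{\Dx}(2r_2 - \xi)$, using that $\bar y_{\Dx}$ is nondecreasing and $2r_1 - \xi < 2r_2 - \xi$; hence every $\xi < \phi(r_1)$ is admissible in the supremum defining $\phi(r_2)$, giving $\phi(r_2) \ge \phi(r_1)$. The same argument, using $\psi(r) = 2r - \phi(r)$ and reversing roles, shows $\psi$ is increasing — or one argues symmetrically from $\psi(r) = \sup\{\xi \mid \bar y_{\Dx}(\xi') < \bar y(2r - \xi') \text{ for all } \xi' < \xi\}$ after checking this alternative description holds.

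For the Lipschitz bounds I would exploit \eqref{eq:coincideGrid} (itself a consequence of \eqref{eq:coincideEnd} and Definition~\ref{def:maps}): on each gridcell $[\xi_{3j}, \xi_{3j+3})$ both $\phi$ and $\psi$ map the cell onto itself, so $\|\phi - \id\|_\infty \le \sup_j (\xi_{3j+3} - \xi_{3j})$, which is finite and in fact $\mathcal O(\Dx)$ because the Lagrangian mesh spacing is controlled by \eqref{eq:lagrPoints} together with the boundedness of $\bar G_{\Dx}$; the same bound holds for $\psi - \id$ by (iv). For the derivative bound $0 \le \dot\phi \le 2$ a.e., fix $r_1 < r_2$ and set $\xi_i = \phi(r_i)$, $\eta_i = \psi(r_i) = 2r_i - \xi_i$; from $\phi, \psi$ increasing we get $\xi_2 \ge \xi_1$ and $\eta_2 \ge \eta_1$, and adding these after writing $\eta_2 - \eta_1 = 2(r_2 - r_1) - (\xi_2 - \xi_1)$ yields $0 \le \xi_2 - \xi_1 \le 2(r_2 - r_1)$, i.e. $\phi$ is Lipschitz with constant $2$ and nondecreasing; hence $0 \le \dot\phi \le 2$ a.e. by Rademacher, and $0 \le \dot\psi \le 2$ a.e. follows from $\dot\psi = 2 - \dot\phi$. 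Since $\phi$ and $\psi$ are Lipschitz with bounded difference from $\id$, they lie in $W^{1,\infty}(\R)$, giving (ii).

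The main obstacle I anticipate is not any single estimate but the careful verification that the supremum in \eqref{eq:map_chi} is well-defined and finite for every $r$ — i.e. that the admissible set is nonempty and bounded above — and that the candidate geometric description (``smallest intersection point'') is genuinely correct even at points where $\bar y$ or $\bar y_{\Dx}$ is locally constant, which is precisely the situation in Case~2 of the preceding discussion where $\bar y$ and $\bar y_{\Dx}(2r - \cdot)$ may \emph{coincide on a whole subinterval} rather than meeting at a point. Pinning down that $\phi(r)$ then equals the \emph{left endpoint} of that coincidence interval, and checking continuity of $\phi$ and $\psi$ across the gridpoints $\xi_{3j}$ using \eqref{eq:coincideEnd}, is the delicate bookkeeping; this is exactly where the cited well-posedness proof for \cite[Def.~6]{NVWE} does the analogous work, and I would adapt that argument rather than redo it from scratch.
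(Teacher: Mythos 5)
Your treatment of $\phi$ is fine: property (iv) is indeed just \eqref{eq:map_psi}, the identity $\bar y(\phi(r))=\bar y_{\Dx}(\psi(r))$ follows from continuity and monotonicity as you say, and the argument that every $\xi<\phi(r_1)$ remains admissible for $\phi(r_2)$ correctly gives that $\phi$ is increasing. The genuine gap is the monotonicity of $\psi$. Note that "$\psi$ increasing" is literally equivalent to the upper bound $\phi(r_2)-\phi(r_1)\leq 2(r_2-r_1)$, so when you later derive $0\leq\dot\phi\leq 2$ "from $\phi,\psi$ increasing" you are assuming exactly the estimate that has to be proved. Your two suggested routes do not close this: there is no literal "same argument with roles reversed", since $\psi$ is not defined by a supremum, and the symmetric description $\psi(r)=\sup\{\xi\mid \bar y_{\Dx}(\xi')<\bar y(2r-\xi')\ \forall \xi'<\xi\}$ is \emph{false} in general. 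It fails precisely in the coincidence situation you flag as delicate: if the only singular mass of $\bar\nu$ in $[x_{2j},x_{2j+2})$ is an atom at the gridpoint $x_{2j}$ (e.g.\ the $\delta_0$ of Example~\ref{ex:Multipeakon} with the meshes \eqref{eq:meshFam}), then $\bar y$ and $\bar y_{\Dx}$ are constant on the \emph{same} interval $[\xi_{3j},\xi_{3j+1}]$, and for $r\in(\xi_{3j},\tfrac12(\xi_{3j}+\xi_{3j+1}))$ Definition~\ref{def:maps} gives $\phi(r)=\xi_{3j}$ and $\psi(r)=2r-\xi_{3j}$, whereas the role-reversed supremum equals $\xi_{3j}$, not $2r-\xi_{3j}$. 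The tie-breaking in \eqref{eq:map_chi} is genuinely asymmetric (this is the paper's Case~2: $\psi$ sweeps the flat part of $\bar y_{\Dx}$ first, $\phi$ that of $\bar y$ afterwards), so no symmetry argument can be invoked.

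The fix is short and uses the intersection identity you already established: suppose $r_1<r_2$ and $\phi(r_2)>\phi(r_1)+2(r_2-r_1)$, and set $\xi^{*}:=\phi(r_1)+2(r_2-r_1)<\phi(r_2)$; since every point strictly below $\phi(r_2)$ satisfies the defining inequality, $\bar y(\xi^{*})<\bar y_{\Dx}(2r_2-\xi^{*})=\bar y_{\Dx}(2r_1-\phi(r_1))=\bar y_{\Dx}(\psi(r_1))=\bar y(\phi(r_1))$, contradicting that $\bar y$ is nondecreasing and $\xi^{*}>\phi(r_1)$. This gives $0\leq\phi(r_2)-\phi(r_1)\leq 2(r_2-r_1)$, hence (i) for $\psi$, (iii), and the Lipschitz part of (ii) all at once. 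Two smaller points: your bound $\|\phi-\id\|_\infty\leq\sup_j(\xi_{3j+3}-\xi_{3j})$ is correct and sufficient for (ii), but the claim that this is $\mathcal{O}(\Dx)$ is not, since $\xi_{3j+3}-\xi_{3j}=2\Dx+\bar\nu_{\Dx}([x_{2j},x_{2j+2}))\leq 2\Dx+\bar G_\infty$; boundedness is all you need. Deferring the routine well-definedness of the supremum (nonempty, bounded above — which follows from $\xi-\bar G_\infty\leq\bar y(\xi),\bar y_{\Dx}(\xi)\leq\xi$) to the cited proof of \cite[Def.~6]{NVWE} is acceptable, since the paper itself only cites that argument, but the monotonicity of $\psi$ should not be deferred to a symmetry that does not hold.
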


To get a complete understanding of how  $\phi$ and $\psi$ behave on the isolated Lagrangian gridcell $[\xi_{3j}, \xi_{3j+3})$, and thereby on all of $\R$, we will need to take a slight detour via the function $\bar{\mathcal{Y}}(r)$ given by 
\begin{equation}\label{eq:Y}
\bar{ \mathcal{Y}}(r)= \bar y(\phi(r))=\bar y_{\Dx}(\psi(r)) \quad \text{ for all } r\in \R.
\end{equation}
In particular, we are interested in showing that $\bar{\mathcal{Y}}(r)$ is related to the measure $\bar \nu+\bar \nu_{\Dx}$ in much the same way as $\bar y$ is related to $\bar \nu$ in Definition~\ref{def:MapL}. As a consequence, $\bar{\mathcal{Y}}(r)$ will share a lot of properties with $\bar y$.

\begin{lemma}\label{lem:equivalentY}
 The function $\bar{\mathcal{Y}}(r)$ from \eqref{eq:Y} satisfies
\begin{equation}
		\bar{\mathcal{Y}}(r) = \sup \big\{x \in \R\!\mid x + \frac12 (\bar G + \bar G_{\Dx})(x) < r \big\}.  \label{eq:equivalentY} 
	\end{equation}
\end{lemma}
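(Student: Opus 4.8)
The plan is to show that $\bar{\mathcal{Y}}$ is the (left-continuous, increasing) generalized inverse of the map $x \mapsto x + \tfrac12(\bar G + \bar G_{\Dx})(x)$, by first establishing the analogous characterizations for $\bar y$ and $\bar y_{\Dx}$ individually and then combining them along the curve $r \mapsto (\phi(r), \psi(r))$. By Definition~\ref{def:MapL} applied to $(\bar u, \bar\mu, \bar\nu) \in \D_0^{\alpha}$ (so $\bar\nu = \bar\mu$, with cumulative function $\bar G$) and to $P_{\Dx}((\bar u, \bar\mu, \bar\nu))$ (with cumulative function $\bar G_{\Dx}$), we have
\begin{equation*}
	\bar y(\xi) = \sup\{x \mid x + \bar G(x) < \xi\}, \qquad \bar y_{\Dx}(\xi) = \sup\{x \mid x + \bar G_{\Dx}(x) < \xi\}.
\end{equation*}
Equivalently, writing $g(x) = x + \bar G(x)$ and $g_{\Dx}(x) = x + \bar G_{\Dx}(x)$, which are increasing and left-continuous with jumps exactly where $\bar G$, resp. $\bar G_{\Dx}$, jumps, the value $\bar y(\xi)$ is characterized by $g(\bar y(\xi)) \leq \xi$ and $g(\bar y(\xi)+) \geq \xi$ (and symmetrically for $\bar y_{\Dx}$). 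The first step is to record these inequalities cleanly.

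Next I would fix $r \in \R$, set $x := \bar{\mathcal{Y}}(r) = \bar y(\phi(r)) = \bar y_{\Dx}(\psi(r))$, and show $g(x) + g_{\Dx}(x) \leq 2r$ together with $(g + g_{\Dx})(x+) \geq 2r$; the claimed formula \eqref{eq:equivalentY} is exactly the statement that these two inequalities characterize $x$ as $\sup\{x' \mid \tfrac12(g + g_{\Dx})(x') < r\}$. From the characterization of $\bar y$ we get $g(x) \leq \phi(r)$, and from that of $\bar y_{\Dx}$ we get $g_{\Dx}(x) \leq \psi(r)$; adding and using $\phi(r) + \psi(r) = 2r$ from Proposition~\ref{prop:propertiesmaps}(iv) gives $g(x) + g_{\Dx}(x) \leq 2r$. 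For the reverse, the right-limit inequalities give $g(x+) \geq \phi(r)$ and $g_{\Dx}(x+) \geq \psi(r)$, hence $(g + g_{\Dx})(x+) \geq 2r$. This already pins down $x$ up to the usual left-continuity convention, and since $\bar{\mathcal{Y}}$ is increasing (being a composition $\bar y \circ \phi$ of increasing functions) one concludes \eqref{eq:equivalentY}.

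The main obstacle — and the step requiring genuine care rather than bookkeeping — is justifying that the two inequalities $g(x) \leq \phi(r)$ and $g_{\Dx}(x) \leq \psi(r)$ can hold \emph{simultaneously} with the right-hand sides summing to exactly $2r$, i.e. that the definition of $\phi$ via the supremum in \eqref{eq:map_chi} does not "overshoot." Concretely, one must rule out the possibility that both $g(x) < \phi(r)$ strictly and $g_{\Dx}(x) < \psi(r)$ strictly, which would correspond to $\bar y$ and $\bar y_{\Dx}(2r - \cdot)$ failing to actually touch at $\phi(r)$; here one invokes the geometric description following Definition~\ref{def:maps} (that $\phi(r)$ is the smallest intersection point of the increasing curves $\bar y(\xi)$ and $\bar y_{\Dx}(2r - \xi)$, so $\bar y(\phi(r)) = \bar y_{\Dx}(2r - \phi(r)) = \bar y_{\Dx}(\psi(r))$ genuinely holds) together with the fact that at an intersection point at least one of the two "slack" inequalities must be an equality. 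I would also double-check the degenerate cases — where $g$ or $g_{\Dx}$ has a jump at $x$, i.e. $\bar\nu$ or $\bar\nu_{\Dx}$ has an atom at $x$ — against the two model situations (Case 1 and Case 2) already worked out in the excerpt, to make sure the supremum in \eqref{eq:equivalentY} picks out the correct endpoint. Once this is settled the argument is complete.
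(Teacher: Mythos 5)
Your argument is essentially the paper's own proof: both verify that $\bar{\mathcal{Y}}(r)$ satisfies the sandwich inequality $2\bar{\mathcal{Y}}(r) + (\bar G + \bar G_{\Dx})(\bar{\mathcal{Y}}(r)) \leq 2r \leq 2\bar{\mathcal{Y}}(r) + (\bar G + \bar G_{\Dx})(\bar{\mathcal{Y}}(r)+)$ by adding the generalized-inverse inequalities for $\bar y$ at $\phi(r)$ and for $\bar y_{\Dx}$ at $\psi(r)$ (the paper writes them via $\bar H = \id - \bar y$, i.e.\ membership in $\F_0^{\alpha,0}$), using $\phi(r)+\psi(r)=2r$, and concluding from the strict monotonicity of $x \mapsto 2x + (\bar G + \bar G_{\Dx})(x)$. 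The ``main obstacle'' you flag is not actually one: no equality in the one-sided bounds is needed for the sandwich to pin down $\bar{\mathcal{Y}}(r)$, and the touching property $\bar y(\phi(r)) = \bar y_{\Dx}(\psi(r))$ is precisely \eqref{eq:Y}, which the lemma takes as given.
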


\begin{proof}
	Let
	\begin{equation*}
		\mathcal{Z}(r) := \sup \{x \in \R\! \mid 2x + (\bar G + \bar G_{\Dx})(x) < 2r \},
	\end{equation*}
	which, by definition, satisfies 
	\begin{equation}\label{prop:Z}
	2 \mathcal{Z}(r) + (\bar G + \bar G_{\Dx})(\mathcal{Z}(r))\leq 2r \leq 2\mathcal{Z}(r)  + (\bar G+\bar G_{\Dx})(\mathcal{Z}(r)+).
	\end{equation}
	Since the function $2x+(\bar G+\bar G_{\Dx})(x)$ is strictly increasing, the claim follows if we can show that $\bar{\mathcal{Y}}(r)$ also satisfies \eqref{prop:Z}.

	Definition~\ref{def:MapL} implies
	\begin{equation*}
		\bar y(\xi) + \bar G(\bar y(\xi)) \leq \bar y(\xi) + \bar H(\xi) \leq \bar y(\xi) + \bar G(\bar y(\xi)+),
	\end{equation*}
	which in turn leads to 
	\begin{equation}\label{eq:ineqG}
		\bar G(\bar y(\xi)) \leq \bar H(\xi) \leq \bar G(\bar y(\xi)+).
	\end{equation}
	In a similar way, we obtain
	\begin{equation}\label{eq:ineqGdx}
		\bar G_{\Dx}(\bar y_{\Dx}(\xi)) \leq \bar H_{\Dx}(\xi) \leq \bar G_{\Dx}(\bar y_{\Dx}(\xi)+). 
	\end{equation}
	Since both $\bar X$ and $\bar X_{\Dx}$ belong to $\F_0^{\alpha, 0}$, Propositon~\ref{prop:propertiesmaps} implies 
	\begin{align*}
		\bar y(\phi(r)) + \bar y_{\Dx}(\psi(r)) + \bar H(\phi(r)) + \bar H_{\Dx}(\psi(r)) = \phi(r) + \psi(r) = 2r, 
	\end{align*}
	which, after being combined with \eqref{eq:Y} and \eqref{eq:ineqG}--\eqref{eq:ineqGdx}, yields	
	\begin{align*}
		2 \bar{\mathcal{Y}}(r) + (\bar G + \bar G_{\Dx})(\bar {\mathcal{Y}}(r))\leq 2r \leq 2\bar{\mathcal{Y}}(r)  + (\bar G+\bar G_{\Dx})(\bar{\mathcal{Y}}(r)+). \hspace{0.65cm}\qedhere
	\end{align*}
\end{proof}

Next, let 
\begin{equation}\label{eq:setB}
\mathcal{B}:=\{r\in \R\mid \bar{\mathcal{Y}}_r(r)=0 \}
\end{equation}
and introduce $\mathcal{C}=\bar{\mathcal{Y}}(\mathcal{B})$, then, in a similar spirit to \eqref{eq:nuac} and \eqref{eq:nudac}, we have 
\begin{equation*}
\bar \nu_{\mathrm{ac}}+\bar \nu_{\Dx, \mathrm{ac}}=(\bar \nu+\bar \nu_{\Dx})_{\mathrm{ac}}= (\bar \nu+\bar\nu_{\Dx})\vert_{\mathcal{C}^c}= \bar \nu\vert_{\mathcal{C}^c}+ \bar \nu_{\Dx}\vert_{\mathcal{C}^c},
\end{equation*}
and
\begin{equation}\label{eq:sumSingular}
\bar \nu_{\mathrm{sing}}+\bar \nu_{\Dx, \mathrm{sing}}=(\bar \nu+\bar \nu_{\Dx})_{\mathrm{sing}}= (\bar \nu+\bar\nu_{\Dx})\vert_{\mathcal{C}}= \bar\nu\vert_{\mathcal{C}}+ \bar\nu_{\Dx}\vert_{\mathcal{C}}.
\end{equation}
As a consequence, one finds that 
\begin{equation*}
2\mathrm{meas}(\mathcal{B})= \bar \nu_{\mathrm{sing}}(\R)+ \bar \nu_{\Dx, \mathrm{sing}}(\R)=2\bar \nu_{\mathrm{sing}}(\R).
\end{equation*}

To finally get a better understanding of the pair $(\phi, \psi)$, we will apply the following result, which is a special case of \cite[Thm. 3.59]{LeoniBook} and ensures that the chain rule can be applied to compute the derivative of $\bar{\mathcal{Y}}(r)$, given by \eqref{eq:Y}. 

\begin{theorem}\label{thm:chainRule}
Suppose $f$, $g:\R\rightarrow\R$ are Lipschitz continuous, then 
\begin{equation*}
		(f \circ g)' (x)= f'(g(x))g'(x) \hspace{0.35cm} \text{for a.e. } x\in \R , 
	\end{equation*}
	where $f'(g(x))g'(x)$ is interpreted as zero whenever $g'(x) = 0$, even if $f$ is not differentiable at $g(x)$. 
\end{theorem}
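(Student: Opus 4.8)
The plan is to prove the identity from first principles by splitting $\R$ according to whether $g'$ vanishes, with the only delicate point being a measure-theoretic lemma controlling the set where $g'\neq 0$ but $f$ fails to be differentiable at $g(x)$. First I would record the standard facts: a Lipschitz function on $\R$ is absolutely continuous, hence differentiable almost everywhere, and $f\circ g$ is again Lipschitz (with constant $L_fL_g$), hence differentiable a.e.\ as well. Fix a null set $N$ outside of which $f$ and $g$ are both differentiable, and a null set $N_f\subseteq\R$ outside of which $f$ is differentiable. Write $L_f$, $L_g$ for the Lipschitz constants and decompose $N^c=E_0\cup E_1$ with $E_0:=\{x\in N^c : g'(x)=0\}$ and $E_1:=\{x\in N^c : g'(x)\neq 0\}$.

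On $E_0$ the claim is immediate: for $x\in E_0$ one has, as $h\to 0$,
\begin{equation*}
	|f(g(x+h))-f(g(x))| \leq L_f\,|g(x+h)-g(x)| = L_f\,o(|h|),
\end{equation*}
so $(f\circ g)'(x)=0$, which agrees with the value $f'(g(x))g'(x)$ prescribed by the convention. On $E_1$ I would show that $f$ is differentiable at $g(x)$ for almost every $x\in E_1$; granting this, the classical one-variable chain rule --- valid at any point at which the inner map is differentiable and the outer map is differentiable at its image --- yields $(f\circ g)'(x)=f'(g(x))g'(x)$ for a.e.\ $x\in E_1$. Since $E_0\cup E_1$ has full measure, this completes the proof.

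The crux, and the step I expect to be the main obstacle, is showing that $A:=\{x\in E_1 : g(x)\in N_f\}$ has Lebesgue measure zero. Because $g$ is Lipschitz it satisfies the Lusin $(N)$ property, so $g(A)\subseteq N_f$ is null; the nontrivial direction is the converse, which exploits that $g$ \emph{expands} near every point of $E_1$. Write $A=\bigcup_{n\geq 1}A_n$ with
\begin{equation*}
	A_n := \Big\{ x\in A : |g(x+h)-g(x)| \geq \tfrac1n|h| \text{ for all } |h|\leq \tfrac1n \Big\},
\end{equation*}
noting that every $x\in A$ lies in some $A_n$, since $g'(x)\neq 0$ forces $|g(x+h)-g(x)|\geq \tfrac12|g'(x)|\,|h|$ for all sufficiently small $h$. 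Now cover $\R$ by countably many intervals $I$ of length at most $\tfrac1n$. On each such $I$ the restriction $g|_{A_n\cap I}$ is injective and its inverse is $n$-Lipschitz, and since a Lipschitz map does not increase outer measure by more than its constant,
\begin{equation*}
	\mathrm{meas}(A_n\cap I) \leq n\,\mathrm{meas}\big(g(A_n\cap I)\big) \leq n\,\mathrm{meas}(N_f) = 0.
\end{equation*}
Summing over the cover gives $\mathrm{meas}(A_n)=0$ for every $n$, hence $\mathrm{meas}(A)=0$, which is exactly what was needed above. (The same conclusion also follows from the area formula, but the covering argument is self-contained.)
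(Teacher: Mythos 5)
Your argument is correct, but it is worth noting that the paper does not prove this statement at all: Theorem~\ref{thm:chainRule} is simply quoted as a special case of a cited result (Thm.\ 3.59 in Leoni's book on Sobolev spaces), so your proposal supplies a self-contained proof where the paper offers only a reference. Your route is the standard one behind such results and it is sound: the $E_0$ case ($g'(x)=0$) follows directly from the Lipschitz bound $|f(g(x+h))-f(g(x))|\le L_f|g(x+h)-g(x)|=o(|h|)$, which both establishes differentiability of $f\circ g$ there and matches the stated convention; and on $E_1$ the only genuine issue is exactly the one you isolate, namely that $A=\{x: g'(x)\neq 0,\ g(x)\in N_f\}$ is null. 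Your covering argument for this is complete: every $x\in A$ lands in some $A_n$ by the expansion estimate $|g(x+h)-g(x)|\ge\tfrac12|g'(x)||h|$ for small $h$, on each interval $I$ of length at most $\tfrac1n$ the restriction of $g$ to $A_n\cap I$ has an $n$-Lipschitz inverse, and the fact that a Lipschitz map increases outer measure by at most its constant gives $\mathrm{meas}(A_n\cap I)\le n\,\mathrm{meas}(N_f)=0$, so $A$ is null and the classical pointwise chain rule finishes the $E_1$ case. (The side remark about the Lusin $(N)$ property is superfluous, since $g(A)\subseteq N_f$ holds by definition of $A$, but this does not affect the proof.) What your approach buys is an elementary, fully self-contained verification of precisely the statement used in the paper, including the convention at points where $g'=0$; what the citation buys the paper is brevity and access to the more general formulation in the literature (compositions with absolutely continuous outer functions, as in Leoni and in the Serrin--Varberg change-of-variables theorem the paper also invokes).
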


A consequence of Lemma~\ref{lem:equivalentY} and Theorem~\ref{thm:chainRule}, is that we can give a complete description of the behavior of $\phi$ and $\psi$ on any Lagrangian gridcell $[\xi_{3j}, \xi_{3j+3})$. To this end, recall $\mathcal{S}_j$ and $\mathcal{S}_{\Dx,j}$ given by \eqref{def:SSDxj}.

{\it Case 1:} If $B \cap [x_{2j}, x_{2j+2}) = \emptyset$, then as before, $\mathcal{S}_{j}= \emptyset=\mathcal{S}_{\Dx, j}$ ensures that $\bar y$ and $\bar y_{\Dx}$ are strictly increasing on $[\xi_{3j}, \xi_{3j+3})$ and hence $\bar y(\xi)$ and $\bar y_{\Dx}(2r-\xi)$ have a unique intersection point for each $r \in [\xi_{3j}, \xi_{3j+3})$, given by $\phi(r)$. The functions $\phi(r)$ and $\psi(r)$ are therefore strictly increasing on $[\xi_{3j}, \xi_{3j+3})$. 

{\it Case 2:} If $B\cap [x_{2j}, x_{2j+2})\not=\emptyset$, then $\bar y_{\Dx}$ is constant on the interval $[\xi_{3j}, \xi_{3j+1}]$. Thus, by Definition~\ref{def:maps}
\begin{equation*}
\phi(r)=\xi_{3j} \quad \text{ and } \quad \psi(r)= \xi_{3j}+2(r-\xi_{3j}) \quad \text{ for all } r \in \mathcal{B}_{\Dx,j},
\end{equation*}
which implies 
\begin{equation}\label{PhiPB}
\dot \phi(r)=0 \quad \text{ and } \quad \dot \psi(r)=2 \quad \text{ for all } r \in \mathcal{B}_{\Dx,j},
\end{equation}
where 
\begin{equation}\label{def:BDxj}
	\mathcal{B}_{\Dx,j}= \big[\xi_{3j}, \tfrac12 (\xi_{3j}+\xi_{3j+1}) \big].
\end{equation}
Furthermore, one has 
\begin{equation}\label{psi:BS}
\psi(\mathcal{B}_{\Dx,j})=\mathcal{S}_{\Dx,j},
\end{equation}
and $\bar y_{\Dx, \xi}(\xi)> 0$ for a.e.  $\xi \in [\xi_{3j}, \xi_{3j+3})\backslash \mathcal{S}_{\Dx,j}$.

Let $\mathcal{B}_j= \big[\tfrac12(\xi_{3j}+\xi_{3j+1}), \xi_{3j+3} \big]\cap \mathcal B$. By the chain rule, it now follows that 
\begin{equation}\label{PsiPB}
\dot \phi(r)=2 \quad \text{ and } \quad \dot \psi(r)=0 \quad \text{ for all }r\in \mathcal{B}_j,
\end{equation}
and 
\begin{equation}\label{phi:BS}
\phi(\mathcal{B}_j)= \mathcal{S}_{j}.
\end{equation}
To summarize, 
\begin{equation*}
	\mathcal{B}_{\Dx,j} = \big \{r \in [\xi_{3j}, \xi_{3j+3}) \!\mid \dot{\phi}(r) = 0 \big\} \hspace{0.3cm} \text{and} \hspace{0.3cm}\mathcal{B}_j = \big\{r \in [\xi_{3j}, \xi_{3j+3}) \!\mid \dot{\psi}(r) = 0 \big\}. 
\end{equation*}
Furthermore,  by definition, 
\begin{equation*}
\mathcal{B}_j\cap \mathcal{B}_{\Dx,j}\subset \big\{\tfrac{1}{2}(\xi_{3j} + \xi_{3j+1})\big\}\quad \text{ and } \quad \mathcal{B}_j\cup \mathcal{B}_{\Dx,j}= \mathcal{B}\cap [\xi_{3j}, \xi_{3j+3}),
\end{equation*}
which implies, thanks to \eqref{eq:sumSingular}, \eqref{psi:BS}, and \eqref{phi:BS},
\begin{align}\nonumber
(\bar \nu+\bar \nu_{\Dx})_{\mathrm{sing}}\big([x_{2j}, x_{2j+2}) \big)& = 2\mathrm{meas} (\mathcal{B}\cap [\xi_{3j}, \xi_{3j+3}))\\\label{prop:set}
& =2( \mathrm{meas} (\mathcal{B}_j)+ \mathrm{meas}(\mathcal{B}_{\Dx,j})).
\end{align}

Finally, we want to show that the size of $\mathcal{B}_j$ and $\mathcal{B}_{\Dx,j}$ coincide for any $j\in \mathbb{Z}$.  As we will see later, this property yields crucial cancellations in \eqref{eq:trouble} and hence makes it possible to derive a convergence rate.   
\begin{lemma}\label{lem:coincidingLengths}
For any $j\in \mathbb{Z}$, 
\begin{equation*}
\mathrm{meas}(\mathcal{B}_j)=\mathrm{meas}(\mathcal{B}_{\Dx,j})=\frac12 \bar \nu_{\mathrm{sing}}([x_{2j}, x_{2j+2})).
\end{equation*}
\end{lemma}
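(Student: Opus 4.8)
The strategy is to compute $\mathrm{meas}(\mathcal{B}_{\Dx,j})$ directly from its explicit form \eqref{def:BDxj}, compute $\mathrm{meas}(\mathcal{B}_j)$ via the measure-theoretic identity \eqref{prop:set} together with \eqref{size:SSDx}, and then match the two. Concretely, from \eqref{def:BDxj} we have $\mathrm{meas}(\mathcal{B}_{\Dx,j}) = \tfrac12(\xi_{3j+1} - \xi_{3j})$, and by \eqref{eq:Sdx} and \eqref{def:SSDxj}, $\mathrm{meas}(\mathcal{S}_{\Dx,j}) = \xi_{3j+1} - \xi_{3j}$, so $\mathrm{meas}(\mathcal{B}_{\Dx,j}) = \tfrac12 \mathrm{meas}(\mathcal{S}_{\Dx,j})$. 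Invoking \eqref{size:SSDx}, which records the key property that the projection operator $P_{\Dx}$ preserves the mass of the singular part locally on each cell $[x_{2j}, x_{2j+2})$, this equals $\tfrac12 \bar\nu_{\mathrm{sing}}([x_{2j}, x_{2j+2}))$. This settles the value of $\mathrm{meas}(\mathcal{B}_{\Dx,j})$ and one of the two claimed equalities.

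For $\mathrm{meas}(\mathcal{B}_j)$, I would feed the two cases of the case analysis into \eqref{prop:set}. In Case 1, where $B \cap [x_{2j}, x_{2j+2}) = \emptyset$, both $\mathcal{B}_j$ and $\mathcal{B}_{\Dx,j}$ are empty (or at most a single point, hence null), and $(\bar\nu + \bar\nu_{\Dx})_{\mathrm{sing}}([x_{2j}, x_{2j+2})) = 0$ since $\bar\nu_{\mathrm{sing}}([x_{2j},x_{2j+2})) = 0$ by \eqref{size:SSDx} and the fact that the singular part lives on $B$; so the identity holds trivially. In Case 2, \eqref{prop:set} gives $2(\mathrm{meas}(\mathcal{B}_j) + \mathrm{meas}(\mathcal{B}_{\Dx,j})) = (\bar\nu + \bar\nu_{\Dx})_{\mathrm{sing}}([x_{2j}, x_{2j+2}))$. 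Now $(\bar\nu + \bar\nu_{\Dx})_{\mathrm{sing}} = \bar\nu_{\mathrm{sing}} + \bar\nu_{\Dx,\mathrm{sing}}$ by mutual singularity, and by \eqref{size:SSDx} (applied to both $\bar\nu$ and $\bar\nu_{\Dx}$, which agree on cells by Definition~\ref{def:ProjOperator}) the right-hand side equals $2\bar\nu_{\mathrm{sing}}([x_{2j}, x_{2j+2}))$. Substituting the already-established value $\mathrm{meas}(\mathcal{B}_{\Dx,j}) = \tfrac12 \bar\nu_{\mathrm{sing}}([x_{2j}, x_{2j+2}))$ and solving for $\mathrm{meas}(\mathcal{B}_j)$ yields $\mathrm{meas}(\mathcal{B}_j) = \tfrac12\bar\nu_{\mathrm{sing}}([x_{2j}, x_{2j+2}))$, completing the proof.

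The only genuinely delicate point is justifying the bookkeeping identity \eqref{prop:set} and the disjointness statements $\mathcal{B}_j \cap \mathcal{B}_{\Dx,j} \subset \{\tfrac12(\xi_{3j}+\xi_{3j+1})\}$ and $\mathcal{B}_j \cup \mathcal{B}_{\Dx,j} = \mathcal{B} \cap [\xi_{3j}, \xi_{3j+3})$; these rest on the chain rule from Theorem~\ref{thm:chainRule} applied to $\bar{\mathcal{Y}} = \bar y \circ \phi = \bar y_{\Dx} \circ \psi$, the relabeling identity $\dot\phi + \dot\psi = 2$ from Proposition~\ref{prop:propertiesmaps}, together with the facts $\psi(\mathcal{B}_{\Dx,j}) = \mathcal{S}_{\Dx,j}$ and $\phi(\mathcal{B}_j) = \mathcal{S}_j$ from \eqref{psi:BS} and \eqref{phi:BS} — but all of this is already assembled in the text preceding the lemma statement. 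Given that groundwork, the lemma itself reduces to the short arithmetic above, so the main obstacle is purely in carefully citing \eqref{size:SSDx} and \eqref{prop:set} rather than in any new estimate.
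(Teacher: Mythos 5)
Your proposal is correct and follows essentially the same route as the paper: determine $\mathrm{meas}(\mathcal{B}_{\Dx,j})=\tfrac12\mathrm{meas}(\mathcal{S}_{\Dx,j})=\tfrac12\bar\nu_{\mathrm{sing}}([x_{2j},x_{2j+2}))$ (the paper does this via the change of variables $\psi$ with $\dot\psi=2$, you read it off directly from \eqref{def:BDxj} and \eqref{size:SSDx}, which is equivalent), and then solve for $\mathrm{meas}(\mathcal{B}_j)$ using \eqref{prop:set}, with the degenerate case $\bar\nu_{\mathrm{sing}}([x_{2j},x_{2j+2}))=0$ handled trivially exactly as in the paper.
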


\begin{proof}
If $\bar \nu_{\mathrm{sing}}([x_{2j}, x_{2j+2}))=\bar \nu_{\Dx,\mathrm{sing}}([x_{2j}, x_{2j+2}))=0$, the claim follows immediately from \eqref{prop:set}. 

For $\bar \nu_{\mathrm{sing}}([x_{2j}, x_{2j+2}))=\bar \nu_{\Dx,\mathrm{sing}}([x_{2j}, x_{2j+2}))\not =0$, we have that $\bar V_{\Dx, \xi}(\xi)=1$ for all $\xi \in \mathcal{S}_{\Dx, j}$ by \eqref{eq:Sdx} and \eqref{def:SSDxj}. Furthermore, $\dot \psi(r)=2$ for all $r \in \mathcal{B}_{\Dx,j}$, which combined with \eqref{eq:nudac} and \eqref{psi:BS} yields 
\begin{align*}
\bar \nu_{\mathrm{sing}}([x_{2j}, x_{2j+2}))& =\bar  \nu_{\mathrm{sing}, \Dx} ([x_{2j}, x_{2j+2})) =  \int_{\mathcal{S}_{\Dx,j}} 1 d\eta\\
& = \int_{\psi(\mathcal{B}_{\Dx,j})} 1 d\eta
 = \int_{\mathcal{B}_{\Dx,j}} 2 d\eta
= 2 \mathrm{meas} (\mathcal{B}_{\Dx,j}),
\end{align*}
and recalling \eqref{prop:set} therefore finishes the proof. 
\end{proof}

Next, recall the composition operator $\bullet$ from \eqref{eq:relabel} and introduce 
\begin{equation}\label{def:cX}
\bar{\mathcal{X}}(r)=  (\bar{\mathcal{Y}}(r), \bar{\mathcal{U}}(r), \bar{\mathcal{V}}(r), \bar{\mathcal{H}}(r)) =  \bar X \bullet \phi(r),
\end{equation}
and 
\begin{equation}\label{def:cXdx}
\bar {\mathcal{X}}_{\Dx}(r)= (\bar {\mathcal{Y}}_{\Dx}(r), \bar {\mathcal{U}}_{\Dx}(r), \bar {\mathcal{V}}_{\Dx}(r), \bar {\mathcal{H}}_{\Dx}(r)) = \bar X_{\Dx} \bullet \psi(r),
\end{equation}
which satisfy, cf. \eqref{eq:Y}
\begin{equation}\label{prop:YYD}
\bar{\mathcal{Y}}(r)= \bar{\mathcal{Y}}_{\Dx}(r) \quad \text{ for all } r\in \R. 
\end{equation}

Note that $\bar{\mathcal{X}}$ and $\bar {\mathcal{X}}_{\Dx}$, in general, do not belong to $\F^{\alpha}$, given by Definition~\ref{def:LagSet}, since $\phi$ and $\psi$ do not belong to $\mathcal{G}$, the set of relabeling functions given by Definition~\ref{def:RelabelingGroup}. To be more specific, there are two conditions which are not satisfied:
\begin{itemize}
\item Since $\phi-\id$ and $\psi-\id$ belong to $W^{1, \infty}(\mathbb{R})$, but not to $E_2$, the same holds for $\bar{\mathcal{Y}}-\id $ and $\bar {\mathcal{Y}}_{\Dx}- \id$. 
\item Since $\phi$ and $\psi$ are only increasing, but not strictly increasing, $\bar{\mathcal{Y}}_r+ \bar {\mathcal{H}}_r \geq 0$ and $\bar{\mathcal{Y}}_{\Dx, r} +\bar{\mathcal{H}}_{\Dx,r}\geq 0$, by Theorem~\ref{thm:chainRule}, hence violating Definition~\ref{def:LagSet}~\ref{def:LagSet2}. 
\end{itemize}

We claim that the tuples $\bar{\mathcal{X}}$ and $\bar{\mathcal{X}}_{\Dx}$ belong to the space $\mathfrak{F}^{\alpha}$, which is defined next. 

\begin{definition}
The space $\mathfrak{F}^{\alpha}$ is composed of all quadruplets $\mathcal{X}=(\mathcal{Y}, \mathcal{U}, \mathcal{V}, \mathcal{H})$ satisfying
\begin{enumerate}[label=(\roman*)]
\item $(\mathcal{Y}-\id, \mathcal{U}, \mathcal{V}, \mathcal{H})\in [W^{1, \infty}(\mathbb{R})]^{4}$,
\item $\mathcal{Y}_r, \mathcal{H}_r\geq 0$ a.e. and $\mathcal{Y}_r+\mathcal{H}_r\geq 0$ a.e.,
\item $\mathcal{Y}_r\mathcal{V}_r= \mathcal{U}_r^2$ a.e.,  
\item $0\leq \mathcal{V}_r\leq \mathcal{H}_r$ a.e.,
\item There exists $\kappa:\mathbb{R}\to[0,1)$ such that $\mathcal{V}_r(r)= \kappa(\mathcal{Y}(r))\mathcal{H}_r (r)$ for a.e. $r\in \mathbb{R}$ with $\kappa(\mathcal{Y}(r))=1$ whenever $\mathcal{U}_r(r)<0$.
\end{enumerate}
\end{definition}

\begin{lemma}
The tuplets $\bar{\mathcal{X}}$ and $\bar{\mathcal{X}}_{\Dx}$ belong to $\mathfrak{F}^{\alpha}$. 
\end{lemma}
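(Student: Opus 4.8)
The plan is to verify each of the five defining conditions of $\mathfrak{F}^{\alpha}$ for $\bar{\mathcal{X}} = \bar X \bullet \phi$ (the argument for $\bar{\mathcal{X}}_{\Dx} = \bar X_{\Dx}\bullet \psi$ is identical with $\bar X_{\Dx}$ and $\psi$ in place of $\bar X$ and $\phi$). Since $\bar X = (\bar y,\bar U,\bar V,\bar H)\in \F_0^{\alpha,0}\subset \F^{\alpha}$, we know $\bar y-\id,\bar U,\bar V,\bar H\in W^{1,\infty}(\R)$, and by Proposition~\ref{prop:propertiesmaps} we have $\phi-\id\in W^{1,\infty}(\R)$ with $0\leq \dot\phi\leq 2$ a.e.; composition of a Lipschitz function with a Lipschitz function is Lipschitz, so each component of $\bar{\mathcal{X}}-(\id,0,0,0)$ lies in $W^{1,\infty}(\R)$, giving (i). For the differentiated relations I would invoke the chain rule in the form of Theorem~\ref{thm:chainRule}: for a.e. $r$,
\begin{equation*}
	\bar{\mathcal{Y}}_r(r) = \bar y_\xi(\phi(r))\,\dot\phi(r),\quad \bar{\mathcal{U}}_r(r) = \bar U_\xi(\phi(r))\,\dot\phi(r),\quad \bar{\mathcal{V}}_r(r) = \bar V_\xi(\phi(r))\,\dot\phi(r),\quad \bar{\mathcal{H}}_r(r) = \bar H_\xi(\phi(r))\,\dot\phi(r),
\end{equation*}
with the convention that the right-hand side is $0$ wherever $\dot\phi(r)=0$.

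From this representation, (ii) is immediate: $\bar y_\xi,\bar H_\xi\geq 0$ a.e. and $\dot\phi\geq 0$ a.e., so $\bar{\mathcal{Y}}_r,\bar{\mathcal{H}}_r\geq 0$ and a fortiori $\bar{\mathcal{Y}}_r+\bar{\mathcal{H}}_r\geq 0$ a.e. (note $\mathfrak{F}^{\alpha}$ only asks for this nonnegativity, not for a strictly positive lower bound, which is exactly the relaxation needed since $\phi$ is merely increasing). For (iii), multiply the first and third identities and use Definition~\ref{def:LagSet}~\ref{def:LagSet3}, namely $\bar y_\xi \bar V_\xi = \bar U_\xi^2$ a.e.: on the set where $\dot\phi(r)\neq 0$ we get $\bar{\mathcal{Y}}_r\bar{\mathcal{V}}_r = \bar y_\xi(\phi(r))\bar V_\xi(\phi(r))\dot\phi(r)^2 = \bar U_\xi(\phi(r))^2\dot\phi(r)^2 = \bar{\mathcal{U}}_r^2$, and on the set where $\dot\phi(r)=0$ both sides vanish; one small point to handle carefully is that $\phi$ maps a null set of $r$ to where $\bar y_\xi\bar V_\xi = \bar U_\xi^2$ might fail — this is controlled because $\dot\phi$ is bounded, so $\phi$ maps null sets to null sets, hence the exceptional set pulls back to a null set. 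Condition (iv) follows by multiplying $0\leq \bar V_\xi\leq \bar H_\xi$ a.e. (Definition~\ref{def:LagSet}~(iv)) by $\dot\phi(r)\geq 0$.

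The last condition (v) is the one requiring the most care, and I expect it to be the main obstacle. We must produce $\kappa:\R\to[0,1)$ with $\bar{\mathcal{V}}_r(r) = \kappa(\bar{\mathcal{Y}}(r))\bar{\mathcal{H}}_r(r)$ a.e. and $\kappa(\bar{\mathcal{Y}}(r))=1$ whenever $\bar{\mathcal{U}}_r(r)<0$; note the codomain is $[0,1)$, not $(0,1]$ as in $\F^{\alpha}$, reflecting that $\bar{\mathcal{H}}_r$ can vanish. Since $\bar X\in\F_0^{\alpha,0}$ we in fact have $\bar V = \bar H$, so $\bar V_\xi = \bar H_\xi$ a.e., and moreover $\bar X\in\F^{\alpha}$ supplies a $\bar\kappa(\bar y(\xi))\in(0,1]$ with $\bar V_\xi(\xi)=\bar\kappa(\bar y(\xi))\bar H_\xi(\xi)$ — consistent with $\bar V_\xi=\bar H_\xi$ forcing $\bar\kappa\equiv 1$ wherever $\bar H_\xi>0$. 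The natural candidate is therefore to set $\kappa(x)=1$ on $\bar{\mathcal{Y}}(\mathbb{R})$ where the fibre has $\bar H_\xi(\phi(r))\dot\phi(r)>0$, and $\kappa(x)=0$ otherwise; one then checks that on $\{\dot\phi(r)>0\}$ we have $\bar{\mathcal{V}}_r(r)=\bar V_\xi(\phi(r))\dot\phi(r)=\bar H_\xi(\phi(r))\dot\phi(r)=\bar{\mathcal{H}}_r(r)=1\cdot\bar{\mathcal{H}}_r(r)$, while on $\{\dot\phi(r)=0\}$ both $\bar{\mathcal{V}}_r$ and $\bar{\mathcal{H}}_r$ are $0$ so the identity $\bar{\mathcal{V}}_r=\kappa\cdot\bar{\mathcal{H}}_r$ holds for any value of $\kappa$; for the well-definedness of $\kappa$ as a function of $x=\bar{\mathcal{Y}}(r)$ one uses that $\bar{\mathcal{Y}}$ is constant on each maximal interval where it is constant, and the definition of $\kappa$ only sees the value of $x$. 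Finally, $\bar{\mathcal{U}}_r(r)<0$ forces $\dot\phi(r)>0$ and $\bar U_\xi(\phi(r))<0$; the latter gives $\bar y_\xi(\phi(r))>0$ (via $\bar y_\xi\bar V_\xi=\bar U_\xi^2$ and the structure of $\F^{\alpha}$), hence $\bar H_\xi(\phi(r))\geq \bar V_\xi(\phi(r))=\bar U_\xi(\phi(r))^2/\bar y_\xi(\phi(r))>0$, so $\bar{\mathcal{H}}_r(r)>0$ and $\kappa(\bar{\mathcal{Y}}(r))=1$, as required. This completes the verification; the identical argument with $(\bar X_{\Dx},\psi)$ shows $\bar{\mathcal{X}}_{\Dx}\in\mathfrak{F}^{\alpha}$.
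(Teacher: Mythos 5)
Your verification takes the same route as the paper: the paper gives no detailed argument, stating only that the lemma is an immediate consequence of the chain rule (Theorem~\ref{thm:chainRule}) together with the change-of-variables formula (Theorem~\ref{thm:changeVar}), and your componentwise check based on $\bar{\mathcal{X}}_r(r)=\bar X_\xi(\phi(r))\dot\phi(r)$ (with the zero-product convention) is exactly how that is meant to be carried out. One justification is misstated, although the conclusion survives: you argue that the null set $N$ of $\xi$-values where the a.e.\ relations of Definition~\ref{def:LagSet} fail ``pulls back to a null set'' because $\dot\phi$ is bounded. Lipschitz maps push \emph{forward} null sets to null sets, but preimages need not be null: $\phi$ can be constant on an interval of positive length whose image is a single point of $N$, so $\phi^{-1}(N)$ may well have positive measure. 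What rescues the step is that $\dot\phi=0$ a.e.\ on $\phi^{-1}(N)$: applying Theorem~\ref{thm:changeVar} with $f=\chi_N$ gives $\int_\kappa^\gamma \chi_N(\phi(s))\dot\phi(s)\,ds=\int_{\phi(\kappa)}^{\phi(\gamma)}\chi_N(x)\,dx=0$ for all $\kappa,\gamma$, and since the integrand is nonnegative it vanishes a.e.; combined with the convention in Theorem~\ref{thm:chainRule}, all differentiated relations then hold a.e.\ as you claim. This is precisely why the paper invokes Theorem~\ref{thm:changeVar} alongside the chain rule, so you should replace the ``pulls back to a null set'' remark by this argument. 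Two cosmetic points: since $\bar X\in\F_0^{\alpha,0}$ one has $\bar V=\bar H$, hence $\bar{\mathcal{V}}_r=\bar{\mathcal{H}}_r$ a.e.\ and $\kappa\equiv 1$ already settles condition (v), making your case construction unnecessary; and the codomain $[0,1)$ in the definition of $\mathfrak{F}^{\alpha}$ is evidently a typo (it is incompatible with requiring $\kappa=1$ where $\mathcal{U}_r<0$), as you implicitly noted.
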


The proof is an immediate consequence of Theorem~\ref{thm:chainRule} and the following theorem, that is a special case of \cite[Thm. 4]{ChainRule}, which enable us to perform a change of variables when integrating.

\begin{theorem}\label{thm:changeVar}
Let $f\in L^1(\R)$
and assume that $g: \R\rightarrow \R$ is Lipschitz continuous and increasing. Then 
\begin{enumerate}
		\item $(f\circ g)g' \in L^1(\R)$ and  
		\item the change of variables formula
		 \begin{equation*}
		\int_{g(\kappa)}^{g(\gamma)}f(x)dx = \int_{\kappa}^{\gamma} f(g(s))g'(s)ds, 
	\end{equation*}
	holds for any $\gamma, \kappa \in  \R$.
	\end{enumerate}
\end{theorem}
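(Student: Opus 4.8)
The plan is to prove the statement through the pushforward-measure form of the change-of-variables formula, which handles the possible non-injectivity of $g$ cleanly. Since $g$ is Lipschitz it is absolutely continuous, $g'$ exists a.e. with $g'\in L^{\infty}(\R)$, and, $g$ being increasing, $g'\geq 0$ a.e.; in addition the fundamental theorem of calculus gives $g(d)-g(c)=\int_c^d g'(s)\,ds$ for all $c\leq d$. Both sides of the asserted identity are antisymmetric under swapping $\kappa$ and $\gamma$ and vanish when $\kappa=\gamma$, so it suffices to treat $\kappa<\gamma$; fix such a pair and introduce the finite Borel measure $\lambda$ on $\R$ defined by $\lambda(A)=\int_{A\cap[\kappa,\gamma]}g'(s)\,ds$.

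The heart of the argument is to show that the pushforward $g_{\#}\lambda$ is exactly Lebesgue measure restricted to $[g(\kappa),g(\gamma)]$. For a bounded open interval $(a,b)$, the continuity and monotonicity of $g$ force $g^{-1}((a,b))\cap[\kappa,\gamma]$ to be an interval with endpoints $s_1=\sup\{s\in[\kappa,\gamma]\mid g(s)\leq a\}$ and $s_2=\inf\{s\in[\kappa,\gamma]\mid g(s)\geq b\}$ (the two endpoints themselves carry no $\lambda$-mass, as $g$ can at most be flat there, matching the fact that Lebesgue measure does not see such flat pieces either); by continuity one gets $g(s_1)=\max\{a,g(\kappa)\}$ and $g(s_2)=\min\{b,g(\gamma)\}$, with the evident degenerate reading when $(a,b)$ meets $(g(\kappa),g(\gamma))$ only partially or not at all. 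Therefore
\[
g_{\#}\lambda\big((a,b)\big)=\lambda\big(g^{-1}((a,b))\big)=\int_{s_1}^{s_2}g'(s)\,ds=g(s_2)-g(s_1)=\mathrm{meas}\big((a,b)\cap[g(\kappa),g(\gamma)]\big).
\]
Since bounded open intervals form a $\pi$-system generating $\mathcal{B}(\R)$ and $g_{\#}\lambda$, being a pushforward of a finite measure, agrees on them with the finite measure $\mathrm{meas}(\,\cdot\cap[g(\kappa),g(\gamma)])$, Dynkin's $\pi$--$\lambda$ theorem yields equality of the two measures on all Borel sets.

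The conclusion is now standard. The abstract identity $\int_{\R}h\,d(g_{\#}\lambda)=\int_{\R}(h\circ g)\,d\lambda$ holds for every nonnegative Borel function $h$ — check it on indicators, extend to simple functions by linearity, and pass to the monotone limit — and hence, decomposing into positive and negative parts, for every $h\in L^1(g_{\#}\lambda)$. Taking $h=|f|$ with $[\kappa,\gamma]=[-n,n]$ and letting $n\to\infty$ by monotone convergence gives $\int_{\R}|f(g(s))|\,g'(s)\,ds\leq\|f\|_{L^1(\R)}<\infty$, which is item (1). Because $f\in L^1(\R)$ and $g_{\#}\lambda$ is Lebesgue measure restricted to the bounded interval $[g(\kappa),g(\gamma)]$, we have $f\in L^1(g_{\#}\lambda)$, so applying the identity with $h=f$ yields
\[
\int_{g(\kappa)}^{g(\gamma)}f(x)\,dx=\int_{\R}f\,d(g_{\#}\lambda)=\int_{\R}(f\circ g)\,d\lambda=\int_{\kappa}^{\gamma}f(g(s))\,g'(s)\,ds,
\]
which is item (2). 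The only genuinely delicate point is the interval-level computation of $g_{\#}\lambda$: one must verify that the flat parts of $g$ contribute no $\lambda$-mass and treat correctly the boundary cases where $(a,b)$ lies partly or wholly outside $[g(\kappa),g(\gamma)]$; everything beyond that is routine measure theory.
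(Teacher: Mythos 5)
Your argument is correct, but it is not the paper's route: the paper offers no proof at all, simply quoting the result as a special case of Serrin--Varberg \cite[Thm.~4]{ChainRule}, whose proof runs through the a.e.\ chain rule --- one sets $F(x)=\int_{-\infty}^x f(t)\,dt$, notes that $F\circ g$ is absolutely continuous because $F$ is absolutely continuous and $g$ is monotone and Lipschitz, shows $(F\circ g)'=(f\circ g)\,g'$ a.e.\ (with the convention that the product is $0$ where $g'=0$), and concludes by the fundamental theorem of calculus. You instead identify the pushforward under $g$ of the measure $g'(s)\,ds$ restricted to $[\kappa,\gamma]$ with Lebesgue measure on $[g(\kappa),g(\gamma)]$, via the interval computation plus a $\pi$--$\lambda$ (uniqueness-of-measures) argument, and then invoke the abstract change-of-variables identity for pushforwards; item (1) follows by exhausting $\R$ with $[-n,n]$ and monotone convergence. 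This is a legitimate, self-contained alternative, and arguably cleaner in that the monotone structure is exploited only once, in the identification $g_{\#}\lambda=\mathrm{meas}(\cdot\cap[g(\kappa),g(\gamma)])$; the chain-rule route, by contrast, generalizes more readily to the non-monotone setting treated by Serrin--Varberg. Two small points you leave implicit: single points carry no $\lambda$-mass simply because $\lambda$ is absolutely continuous with respect to Lebesgue measure (no flatness argument is needed), and since $f\in L^1(\R)$ is only Lebesgue measurable, $f\circ g$ need not be measurable, so one should pass to a Borel representative of $f$; your pushforward identity then shows that the modification set pulls back to a $\lambda$-null set, so $(f\circ g)g'$ is unaffected a.e.\ --- precisely the role of the convention ``$(f\circ g)g'=0$ where $g'=0$'' that the paper records in Theorem~\ref{thm:chainRule}. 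Neither point is a gap, just worth a sentence each.
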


Denote by $g(\bar {\mathcal{X}})$ the function from \cite[Def. 4.3]{AlphaHS}, which describes the energy loss at wave breaking continuously in time, in contrast to $\bar {\mathcal{V}}_{r}$ which might drop abruptly at wave breaking. For $\mathcal{X}\in\mathfrak{F}^{\alpha}$, it is defined by 
 \begin{align}\label{eq:g}
        g(\mathcal{X})(r) \!&:= \begin{cases}
        \big(1 - \alpha(\mathcal{Y}(r)) \big)\mathcal{V}_{r}(r), & r \in \Omega_d(\mathcal{X}), \\
        \mathcal{V}_{r}(r), & r \in \Omega_c(\mathcal{X}),
     \end{cases}
    \end{align}
where 
\begin{equation}\label{eq:omegas}
    \Omega_d (\mathcal{X}) := \{ r \in \R \! \mid \mathcal{U}_{r}(r) < 0 \} \hspace{0.35cm} \text{and} \hspace{0.35cm}     \Omega_c(\mathcal{X}) := \{r \in \R \! \mid \mathcal{U}_{r}(r) \geq 0 \}.
\end{equation}
Note that $\Omega_d(\mathcal{X})$ contains all the points at which the $\alpha$-dissipative solution eventually will experience wave breaking, whereas $\Omega_c(\mathcal{X})$ contains the points where there will be no wave breaking in the future. 

\begin{lemma}\label{lem:boundHrUrgr} Suppose $(\bar{u}, \bar{\mu}, \bar{\nu}) \in \D^{\alpha}_0$. Let $\bar{X} = L((\bar{u}, \bar{\mu}, \bar{\nu}))$ and  \newline $\bar{X}_{\Dx} = L \circ P_{\Dx}((\bar u, \bar \mu, \bar \nu))$. Moreover, introduce $\bar{\mathcal{X}}= \bar X\bullet \phi$ and $\bar{\mathcal{X}}_{\Dx}= \bar X_{\Dx} \bullet \psi$ with $\phi$ and $\psi$ given by Definition~\ref{def:maps}, then the following estimates hold 
\begin{align}\label{help:1}
\|\bar{\mathcal{H}}_r - \bar{\mathcal{H}}_{\Dx,r}\|_{L^1(\mathcal{B}^c)}& \leq 2\bar F_{\mathrm{ac},\infty}^{\nicefrac{1}{2}}\|\bar u_x - \bar u_{\Dx, x}\|_2,\\ \label{help:2}
\|\bar {\mathcal{U}}_r-\bar{\mathcal{U}}_{\Dx,r}\|_{L^2(\mathcal{B}^c)}& \leq 2\|\bar u_x-\bar u_{\Dx,x}\|_2,
\\ \label{help:3}
\|g(\bar {\mathcal{X}}) - g(\bar{\mathcal{X}}_{\Dx}) \|_{L^1(\mathcal{B}^c)}& \leq 4 \bar F_{\mathrm{ac}, \infty}^{\nicefrac{1}{2}} \|\bar u_x - \bar u_{\Dx, x}\|_2.
\end{align}
\end{lemma}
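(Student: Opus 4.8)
The plan is to reduce each of the three norms, restricted to $\mathcal{B}^c$, to an integral of $\bar u_x$ and $\bar u_{\Dx,x}$ over $\R$ in the Eulerian variable; this is possible because on $\mathcal{B}^c$ the differentiated rescaled variables can be written through $\bar u_x\circ\bar{\mathcal{Y}}$, $\bar u_{\Dx,x}\circ\bar{\mathcal{Y}}$ and $\bar{\mathcal{Y}}_r$, with the \emph{same} function $\bar{\mathcal{Y}}$ occurring in both $\bar{\mathcal{X}}$ and $\bar{\mathcal{X}}_{\Dx}$ by \eqref{prop:YYD}. Concretely, since $\bar X,\bar X_{\Dx}\in\F_0^{\alpha,0}$ one has $\bar H=\bar V$, and (see the proof of \cite[Prop. 2.1.5]{PhdThesisNordli}) $\bar U_\xi=\bar u_x(\bar y)\bar y_\xi$ and $\bar V_\xi=\bar H_\xi=\bar u_x^2(\bar y)\bar y_\xi$ a.e.\ on $\mathcal{S}^c$, and analogously on $\mathcal{S}_{\Dx}^c$ with $\bar u_{\Dx,x},\bar y_{\Dx}$. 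By Theorem~\ref{thm:chainRule}, $\bar{\mathcal{Y}}_r=\bar y_\xi(\phi)\dot\phi=\bar y_{\Dx,\xi}(\psi)\dot\psi$ a.e., so (every factor being nonnegative) a.e.\ on $\mathcal{B}^c=\{\bar{\mathcal{Y}}_r>0\}$ we have $\dot\phi,\dot\psi>0$, $\phi(r)\in\mathcal{S}^c$ and $\psi(r)\in\mathcal{S}_{\Dx}^c$; applying the chain rule to $\bar{\mathcal{X}}=\bar X\bullet\phi$ and $\bar{\mathcal{X}}_{\Dx}=\bar X_{\Dx}\bullet\psi$ and using $\bar y(\phi(r))=\bar{\mathcal{Y}}(r)=\bar y_{\Dx}(\psi(r))$ then gives, a.e.\ on $\mathcal{B}^c$,
\begin{equation*}
	\bar{\mathcal{U}}_r-\bar{\mathcal{U}}_{\Dx,r}=(\bar u_x-\bar u_{\Dx,x})(\bar{\mathcal{Y}})\,\bar{\mathcal{Y}}_r,\qquad \bar{\mathcal{H}}_r-\bar{\mathcal{H}}_{\Dx,r}=\bar{\mathcal{V}}_r-\bar{\mathcal{V}}_{\Dx,r}=(\bar u_x^2-\bar u_{\Dx,x}^2)(\bar{\mathcal{Y}})\,\bar{\mathcal{Y}}_r.
\end{equation*}

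Next I would change variables back to Eulerian coordinates. Since $\bar{\mathcal{Y}}$ is Lipschitz and increasing (by \eqref{eq:equivalentY}) with $\bar{\mathcal{Y}}_r=0$ a.e.\ on $\mathcal{B}$, Theorem~\ref{thm:changeVar} (letting the integration limits tend to $\pm\infty$) yields $\int_{\mathcal{B}^c}f(\bar{\mathcal{Y}}(r))\bar{\mathcal{Y}}_r(r)\,dr=\int_\R f(x)\,dx$ for every $f\in L^1(\R)$. Taking $f=|\bar u_x^2-\bar u_{\Dx,x}^2|$ and applying Cauchy--Schwarz gives
\begin{equation*}
	\|\bar{\mathcal{H}}_r-\bar{\mathcal{H}}_{\Dx,r}\|_{L^1(\mathcal{B}^c)}=\|\bar u_x^2-\bar u_{\Dx,x}^2\|_1\le\|\bar u_x-\bar u_{\Dx,x}\|_2\,\|\bar u_x+\bar u_{\Dx,x}\|_2,
\end{equation*}
so \eqref{help:1} follows once one notes that $\|\bar u_x\|_2=\|\bar u_{\Dx,x}\|_2=\bar F_{\mathrm{ac},\infty}^{\nicefrac{1}{2}}$ --- the equality for the projection holds because $P_{\Dx}$ preserves $d\mu_{\mathrm{ac}}=u_x^2dx$ and, by \eqref{eq:Fsp}, $\bar F_{\Dx,\mathrm{ac},\infty}=\bar F_{\mathrm{ac},\infty}$. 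For \eqref{help:2} I would in addition use $\bar{\mathcal{Y}}_r\le1$ a.e.\ (immediate from \eqref{eq:equivalentY}, since $x\mapsto x+\tfrac12(\bar G+\bar G_{\Dx})(x)$ increases at least linearly), hence $\bar{\mathcal{Y}}_r^2\le\bar{\mathcal{Y}}_r$ and
\begin{equation*}
	\|\bar{\mathcal{U}}_r-\bar{\mathcal{U}}_{\Dx,r}\|_{L^2(\mathcal{B}^c)}^2=\int_{\mathcal{B}^c}(\bar u_x-\bar u_{\Dx,x})^2(\bar{\mathcal{Y}})\,\bar{\mathcal{Y}}_r^2\,dr\le\int_{\mathcal{B}^c}(\bar u_x-\bar u_{\Dx,x})^2(\bar{\mathcal{Y}})\,\bar{\mathcal{Y}}_r\,dr=\|\bar u_x-\bar u_{\Dx,x}\|_2^2.
\end{equation*}

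For \eqref{help:3}, recall $\bar{\mathcal{X}},\bar{\mathcal{X}}_{\Dx}\in\mathfrak{F}^{\alpha}$, so $g(\bar{\mathcal{X}})$ and $g(\bar{\mathcal{X}}_{\Dx})$ are given by \eqref{eq:g}. On $\mathcal{B}^c$ the positivity $\bar{\mathcal{Y}}_r>0$ forces $\sgn\bar{\mathcal{U}}_r=\sgn\bar u_x(\bar{\mathcal{Y}})$ (and similarly for the $\Dx$-quantities), so by \eqref{eq:g}--\eqref{eq:omegas} one has, a.e.\ on $\mathcal{B}^c$, $g(\bar{\mathcal{X}})(r)=\rho\big(\bar{\mathcal{Y}}(r),\bar u_x(\bar{\mathcal{Y}}(r))\big)\bar{\mathcal{Y}}_r(r)$ and $g(\bar{\mathcal{X}}_{\Dx})(r)=\rho\big(\bar{\mathcal{Y}}(r),\bar u_{\Dx,x}(\bar{\mathcal{Y}}(r))\big)\bar{\mathcal{Y}}_r(r)$, where $\rho(x,v):=v^2$ if $v\ge0$ and $\rho(x,v):=(1-\alpha(x))v^2$ if $v<0$. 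Changing variables as above reduces \eqref{help:3} to estimating $\int_\R|\rho(x,\bar u_x(x))-\rho(x,\bar u_{\Dx,x}(x))|\,dx$, and here I would invoke the elementary inequality $|\rho(x,v)-\rho(x,w)|\le|v^2-w^2|+\|\alpha\|_\infty(v-w)^2$ for all $v,w\in\R$, which is clear when $v$ and $w$ have the same sign and otherwise follows from $\min(v^2,w^2)\le(v-w)^2$. Integrating, and combining Cauchy--Schwarz with the crude a priori bound $\|\bar u_x-\bar u_{\Dx,x}\|_2\le\|\bar u_x\|_2+\|\bar u_{\Dx,x}\|_2=2\bar F_{\mathrm{ac},\infty}^{\nicefrac{1}{2}}$, one obtains $\int_\R|\rho(x,\bar u_x)-\rho(x,\bar u_{\Dx,x})|\,dx\le2\big(1+\|\alpha\|_\infty\big)\bar F_{\mathrm{ac},\infty}^{\nicefrac{1}{2}}\|\bar u_x-\bar u_{\Dx,x}\|_2<4\bar F_{\mathrm{ac},\infty}^{\nicefrac{1}{2}}\|\bar u_x-\bar u_{\Dx,x}\|_2$ since $\|\alpha\|_\infty<1$, which is \eqref{help:3}.

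The step I expect to require the most care is the first one: checking that a.e.\ on $\mathcal{B}^c$ one simultaneously has $\phi(r)\in\mathcal{S}^c$, $\psi(r)\in\mathcal{S}_{\Dx}^c$, and that the chain-rule identities remain valid after composition, so that the Lebesgue-null exceptional sets for $\bar y_\xi$, $\bar y_{\Dx,\xi}$ and for the chosen representative of $\bar u_x$ are not inflated to positive measure by $\phi$, $\psi$ or $\bar{\mathcal{Y}}$. This is handled by the Lipschitz (Lusin~N) character of these maps together with the convention, built into Theorems~\ref{thm:chainRule} and~\ref{thm:changeVar}, that the relevant product is zero wherever the inner derivative vanishes. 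The opposite-sign pointwise estimate for $\rho$ used for \eqref{help:3} is the only genuinely new elementary inequality, and it is short.
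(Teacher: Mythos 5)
Your proposal is correct. For \eqref{help:1} and \eqref{help:2} it is essentially the paper's argument: the same representation of $\bar{\mathcal{X}}_r$ and $\bar{\mathcal{X}}_{\Dx,r}$ on $\mathcal{B}^c$ through $\bar u_x(\bar{\mathcal{Y}})$, $\bar u_{\Dx,x}(\bar{\mathcal{Y}})$ and the common factor $\bar{\mathcal{Y}}_r$ (cf.\ \eqref{rep:Xr} and \eqref{prop:YYD}), followed by the change of variables of Theorem~\ref{thm:changeVar} and Cauchy--Schwarz; your explicit use of $\bar{\mathcal{Y}}_r\leq 1$ (valid, since by \eqref{eq:equivalentY} the map $x\mapsto x+\tfrac12(\bar G+\bar G_{\Dx})(x)$ has increments at least those of $x$, so $\bar{\mathcal{Y}}$ is $1$-Lipschitz) even yields \eqref{help:2} with constant $1$ instead of $2$, where the paper only says ``same up to slight modifications.''

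For \eqref{help:3} you organize the estimate differently: instead of splitting $\mathcal{B}^c$ into the four sign-sets $\bar{\Omega}_{m,n}$ of \eqref{def:Omegamn} and bounding the mismatch terms $\int_{\bar{\Omega}_{d,c}}\alpha(\bar{\mathcal{Y}})\bar{\mathcal{H}}_r\,dr$ and $\int_{\bar{\Omega}_{c,d}}\alpha(\bar{\mathcal{Y}})\bar{\mathcal{H}}_{\Dx,r}\,dr$ via the sign observation $\bar u_x(\bar{\mathcal{Y}})<0\leq\bar u_{\Dx,x}(\bar{\mathcal{Y}})$, you write $g$ pointwise through $\rho(x,v)$ and use the scalar inequality $|\rho(x,v)-\rho(x,w)|\leq|v^2-w^2|+\|\alpha\|_\infty(v-w)^2$ before changing variables. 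This is an equivalent repackaging of the same sign information and gives $2(1+\|\alpha\|_\infty)\bar F_{\mathrm{ac},\infty}^{\nicefrac{1}{2}}\|\bar u_x-\bar u_{\Dx,x}\|_2$, which is at most the paper's constant $4$. Two minor points: since $\alpha$ takes values in $[0,1)$ one only has $\|\alpha\|_\infty\leq 1$ (the supremum may equal $1$), so conclude with $2(1+\|\alpha\|_\infty)\leq 4$ rather than a strict inequality; and in the opposite-sign case the term you must absorb is $\|\alpha\|_\infty$ times the square of the \emph{negative} argument, which need not be $\min(v^2,w^2)$ --- but this is harmless because when $v$ and $w$ have opposite signs both $v^2$ and $w^2$ are bounded by $(v-w)^2$. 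Your handling of the measure-theoretic subtlety (exceptional null sets under composition with $\phi$, $\psi$, $\bar{\mathcal{Y}}$) is adequately covered by the zero-convention in Theorem~\ref{thm:chainRule} together with applying Theorem~\ref{thm:changeVar} to indicator functions of null sets, exactly as the paper implicitly does.
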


\begin{proof}
A closer look at the proof of the well-posedness of $L$, see \cite[Prop. 2.1.5]{PhdThesisNordli}, reveals that 
\begin{align}\label{rep:Xxi}
\bar X_\xi(\xi) &= (\bar{y}_{\xi}(\xi), \bar{U}_{\xi}(\xi), \bar{V}_{\xi}(\xi), \bar{H}_{\xi}(\xi)) \nonumber 
\\ &= \begin{cases}(\bar y_\xi(\xi), \bar u_x(\bar y(\xi))\bar y_\xi(\xi), \bar u_x^2(\bar y(\xi))\bar y_\xi(\xi), \bar u_x^2(\bar y(\xi))\bar y_\xi(\xi)), &  \xi \in \mathcal{S}^c\!,\\
(0,0,1,1), & \xi \in \mathcal{S},
\end{cases}
\end{align}
and analogously, 
\begin{equation*}
\bar X_{\Dx,\xi}(\xi)= \begin{cases}(\bar y_{\Dx,\xi}(\xi), \bar u_{\Dx,x}(\bar y_{\Dx}(\xi))\bar y_{\Dx,\xi}(\xi), \\
 \quad \bar u_{\Dx,x}^2(\bar y_{\Dx}(\xi))\bar y_{\Dx,\xi}(\xi), \bar u_{\Dx,x}^2(\bar y_{\Dx}(\xi))\bar y_{\Dx,\xi}(\xi)), &   \xi \in \mathcal{S}_{\Dx}^c,\\
(0,0,1,1), &  \xi \in \mathcal{S}_{\Dx},
\end{cases}
\end{equation*}
where $\mathcal{S}$ and $\mathcal{S}_{\Dx}$ are given by \eqref{eq:S} and \eqref{eq:Sdx}, respectively. Thus, recalling \eqref{eq:Y}, \eqref{PhiPB}, and \eqref{psi:BS}--\eqref{phi:BS} and applying Theorem~\ref{thm:chainRule}, we have 
\begin{equation}\label{rep:Xr}
\bar{\mathcal{X}}_r(r)= \begin{cases}
(\bar{\mathcal{Y}}_r(r), \bar u_x(\bar{\mathcal{Y}}(r))\bar{\mathcal{Y}}_r(r), \bar u_x^2 (\bar{\mathcal{Y}}(r))\bar{\mathcal{Y}}_r(r), \bar u_x^2 (\bar{\mathcal{Y}}(r))\bar{\mathcal{Y}}_r(r)) , & r\in \mathcal{B}^c,\\
(0, 0, *,*), &  r\in \mathcal{B},
\end{cases}
\end{equation}
and by  \eqref{prop:YYD}, we also have
\begin{equation*}
\bar{\mathcal{X}}_{\Dx,r}(r)= \begin{cases}
(\bar{\mathcal{Y}}_{r}(r), \bar u_{\Dx, x}(\bar{\mathcal{Y}}(r))\bar{\mathcal{Y}}_{r}(r),\\
 \quad  \bar u_{\Dx, x}^2 (\bar{\mathcal{Y}}(r))\bar{\mathcal{Y}}_{r}(r), \bar u_{\Dx,x}^2 (\bar{\mathcal{Y}}(r))\bar{\mathcal{Y}}_{r}(r)) , &  r\in \mathcal{B}^c,\\
(0, 0, *,*), &  r\in \mathcal{B},\\
\end{cases}
\end{equation*}
where $*$ either equals $0$ or $2$, 
which implies
\begin{align*}
	\|\bar{\mathcal{H}}_r - \bar{\mathcal{H}}_{\Dx,r}\|_{L^1(\mathcal{B}^c)} 
	&=  \int_{\mathcal{B}^c} \! \!|\bar u_x^2(\bar{\mathcal{Y}}) - \bar u_{\Dx, x}^2(\bar{\mathcal{Y}})|\bar{\mathcal{Y}}_r (r)dr. 
\end{align*}
Applying the change of variables $x=\bar{\mathcal{Y}}(r)$, which is possible since all assumptions of Theorem~\ref{thm:changeVar} are satisfied, and thereafter the Cauchy--Schwarz inequality, one ends up with 
\begin{align*}
	  \int_{\mathcal{B}^c} \! \!|\bar u_x^2(\bar{\mathcal{Y}}) - \bar u_{\Dx, x}^2(\bar{\mathcal{Y}})|\bar{\mathcal{Y}}_r (r)dr &\leq \int_{\R} |\bar u_x^2(x) - \bar u_{\Dx, x}^2(x)|dx 
	 \\ & \leq \|\bar u_x + \bar u_{\Dx, x}\|_2 \|\bar u_x - \bar u_{\Dx, x}\|_2 
	 \\ & \leq 2\bar F_{\mathrm{ac},\infty}^{\nicefrac{1}{2}}\|\bar u_x - \bar u_{\Dx, x}\|_2. 
\end{align*}

Since the proof of the second inequality \eqref{help:2} is the same, up to some slight modifications, we do not include the details here.

Finally we turn our attention towards \eqref{help:3}.  Based on \eqref{eq:omegas}, introduce 
\begin{equation}\label{def:Omegamn}
\bar{\Omega}_{m, n}=\mathcal{B}^c\! \! \cap \Omega_m(\bar{\mathcal{X}})\cap \Omega_n(\bar{\mathcal{X}}_{\Dx}), \quad n, m\in \{c,d\},
\end{equation}
and recall that $g$ is given by \eqref{eq:g}. Then 
	\begin{align} \nonumber
		\|g(\bar{\mathcal{X}}) - g(\bar{\mathcal{X}}_{\Dx})\|_{L^1(\mathcal{B}^c)} &= \int_{\bar{\Omega}_{c, c}} \!\big|\bar{\mathcal{V}}_r - \bar{\mathcal{V}}_{\Dx, r} \big|(r)dr
		\\ & \qquad + \int_{\bar{\Omega}_{d, c}}\!\big| \big(1-\alpha(\bar{\mathcal{Y}})\big)\bar{\mathcal{V}}_r - \bar{\mathcal{V}}_{\Dx, r} \big|(r)dr \nonumber
		\\ & \qquad + \int_{\bar{\Omega}_{c, d}}\!\big| \bar{\mathcal{V}}_r - \big(1-\alpha(\bar{\mathcal{Y}}) \big)\bar{\mathcal{V}}_{\Dx, r} \big|(r)dr \nonumber
		\\ & \qquad + \int_{\bar{\Omega}_{d, d}}\!\big(1-\alpha(\bar{\mathcal{Y}})\big)\big|\bar{\mathcal{V}}_r - \bar{\mathcal{V}}_{\Dx, r} \big|(r)dr \nonumber\\
		& \leq   \|\bar{\mathcal{H}}_r - \bar{\mathcal{H}}_{\Dx,r}\|_{L^1(\mathcal{B}^c)} \label{eq:g1}
		\\ & \qquad + \int_{\bar{\Omega}_{d, c}}\alpha(\bar{\mathcal{Y}})\bar{\mathcal{H}}_{r}(r)dr \label{eq:g2}
		\\ & \qquad + \int_{\bar{\Omega}_{c, d}}\alpha(\bar{\mathcal{Y}})\bar{\mathcal{H}}_{\Dx, r}(r)dr, \label{eq:g3}
	\end{align}
	where we used \eqref{prop:YYD}. 
	
	As \eqref{help:1} provides an upper bound on the first term, it remains to examine \eqref{eq:g2} and \eqref{eq:g3}. Here we focus on \eqref{eq:g2}, since \eqref{eq:g3} has a similar structure.
	
Observing that for almost every $r\in \mathcal{B}^c$ the signs of $\bar{\mathcal{U}}_r(r)$ and $\bar{\mathcal{U}}_{\Dx,r}(r)$ coincide with the signs of $\bar u_x(\bar{\mathcal{Y}})(r)$ and $\bar u_{\Dx,x}(\bar{\mathcal{Y}})(r)$, respectively, we find, by Theorem~\ref{thm:changeVar} and \eqref{rep:Xr}, 
	 \begin{align*}
	 	 \int_{\bar{\Omega}_{d, c}}\alpha(\bar{\mathcal{Y}})\bar{\mathcal{H}}_r(r)dr &\leq \int_{\bar{\Omega}_{d, c}}\bar u_x^2(\bar{\mathcal{Y}(}r))\bar{\mathcal{Y}}_r(r)dr
		 \\ & \leq \int_{\bar{\Omega}_{d, c}}\bar u_x(\bar{\mathcal{Y}}(r))\big(\bar u_x(\bar{\mathcal{Y}}(r)) - \bar u_{\Dx, x}(\bar{\mathcal{Y}}(r))\big)\bar{\mathcal{Y}}_r(r)dr
		  \\ &\leq \bar F_{\mathrm{ac}, \infty}^{\nicefrac{1}{2}} \|\bar u_x - \bar u_{\Dx, x}\|_2. 
	\end{align*}
	Deriving a similar estimate for \eqref{eq:g3}, which is left to the interested reader, finishes the proof.
\end{proof}

\subsection{The convergence rate for the critical term \eqref{eq:trouble}}

Recall \eqref{eq:relabel}, that $\hat{X}_{\Dx}(t) = S_t(\bar{X}_{\Dx})$, and introduce 
\begin{align}\label{def:mX}
\mathcal{X}(t,r)&= (\mathcal{Y}(t,r), \mathcal{U}(t,r), \mathcal{V}(t,r), \mathcal{H}(t,r)) =  X(t) \bullet \phi(r) \end{align}
and 
\begin{align}\label{def:mXDx}
\hat {\mathcal{X}}_{\Dx}(t,r)&= (\hat {\mathcal{Y}}_{\Dx}(t,r), \hat {\mathcal{U}}_{\Dx}(t,r), \hat {\mathcal{V}}_{\Dx}(t,r), \hat {\mathcal{H}}_{\Dx}(t,r)) = \hat X_{\Dx}(t) \bullet \psi(r).
\end{align}
As in the last section, one can show that $\mathcal{X}(t, \cdot)$ and $\hat{\mathcal{X}}_{\Dx}(t, \cdot)$ belong to $\mathfrak{F}^{\alpha}$ for every $t\geq 0$ and by Theorem~\ref{thm:chainRule}, \eqref{PhiPB}, and \eqref{PsiPB}, one has in addition that for every $j\in\mathbb{Z}$ \vspace{-0.1cm}
\begin{equation}\label{cond:WB}
(\mathcal{Y}_r+ \mathcal{H}_r)(t,r)=0 \quad \text{and} \quad  \hat{\mathcal{V}}_{\Dx,r}(t,r)= \hat{\mathcal{H}}_{\Dx,r}(t,r)=2\quad \text{for } r\in \mathcal{B}_{\Dx,j},
\end{equation}
and \vspace{-0.1cm}
\begin{equation}\label{cond:WBD}
 (\hat{\mathcal{Y}}_{\Dx,r}+ \hat{\mathcal{H}}_{\Dx,r})(t,r)=0 \quad \text{and}\quad \mathcal{V}_{r}(t,r)= \mathcal{H}_{r}(t,r)=2\quad \text{for }r\in \mathcal{B}_j.
 \end{equation}

Moreover, the solution operator $S_t$ respects the mappings $\phi$ and $\psi$ in the following sense.

\begin{lemma} \label{lem:commutation}
Suppose $(\bar{u}, \bar{\mu}, \bar{\nu}) \in \D_0^{\alpha}$ and $t \in [0, T]$. Let $\bar{X} = L((\bar{u}, \bar{\mu}, \bar{\nu}))$ and $\bar{X}_{\Dx} = L \circ P_{\Dx}((\bar{u}, \bar{\mu}, \bar{\nu}))$. Then 
	\begin{equation}\label{eq:identitiesCom}
		S_t(\bar{X} \bullet \phi) = S_t(\bar{X}) \bullet \phi \hspace{0.35cm} \text{and} \hspace{0.35cm} 		S_t(\bar{X}_{\Dx} \bullet \psi) = S_t(\bar{X}_{\Dx}) \bullet \psi. 
	\end{equation}
\end{lemma}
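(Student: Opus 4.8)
The idea is to use that $S_t$ is, by construction, nothing but the solution operator of the Lagrangian system \eqref{eq:LagrSystem}--\eqref{eq:tau} (on $\mathfrak{F}^{\alpha}$ this is still the case: the contraction argument behind \cite[Lem. 2.2.1]{PhdThesisNordli} uses only Lipschitz continuity together with the bounds $0\le\mathcal{V}_r\le\mathcal{H}_r$ and $\mathcal{V}_\infty<\infty$, all of which persist). Hence it suffices to verify that the candidate $t\mapsto S_t(\bar X)\bullet\phi$ solves \eqref{eq:LagrSystem}--\eqref{eq:tau} with initial datum $\bar X\bullet\phi$, and then invoke uniqueness; replacing $\phi$ by $\psi$ (and $\bar X$ by $\bar X_{\Dx}$) gives the second identity verbatim. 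Write $X(t)=(y,U,V,H)(t)=S_t(\bar X)$ and $\mathcal{X}(t,r)=(\mathcal{Y},\mathcal{U},\mathcal{V},\mathcal{H})(t,r):=X(t)\bullet\phi(r)$, so that $\mathcal{Y}(t,r)=y(t,\phi(r))$ and so on; note $\mathcal{X}(0,\cdot)=\bar X\bullet\phi$ since $S_0=\id$, and that here $\mathcal{Y},\mathcal{V},\dots$ denote the candidate defined by composition, so no circularity arises below.

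Since $y(t,\cdot),U(t,\cdot),H(t,\cdot)$ are $C^{1}$ in $t$, the pointwise relations $\mathcal{Y}_t=\mathcal{U}$ and $\mathcal{H}_t=0$ transfer immediately by composing with $\phi$, and
\begin{equation*}
	\mathcal{U}_t(t,r)=U_t(t,\phi(r))=\tfrac12 V(t,\phi(r))-\tfrac14 V_\infty(t)=\tfrac12\mathcal{V}(t,r)-\tfrac14\mathcal{V}_\infty(t),
\end{equation*}
where we used that $\phi-\id\in W^{1,\infty}(\R)$, so $\phi(r)\to\pm\infty$ as $r\to\pm\infty$, to identify $\mathcal{V}_\infty(t)=\lim_{r\to\infty}V(t,\phi(r))=V_\infty(t)$ (and $\lim_{r\to-\infty}\mathcal{V}(t,r)=0$). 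The only genuine work is the integral equation \eqref{eq:ODE3} for $\mathcal{V}$. Let $\tilde{\tau}_\phi$ be the wave-breaking function \eqref{eq:tau} associated with $\bar X\bullet\phi$. From $\bar{\mathcal{Y}}_r=\bar y_\xi(\phi)\dot\phi\ge0$ and $\bar{\mathcal{U}}_r=\bar U_\xi(\phi)\dot\phi$ (Theorem~\ref{thm:chainRule}), a direct inspection of the three cases in \eqref{eq:tau} shows that $\tilde{\tau}_\phi(r)=\tau(\phi(r))$ for a.e. $r$ with $\dot\phi(r)>0$; on $\{\dot\phi=0\}$ the value of $\tilde\tau_\phi$ may differ, but there $\bar{\mathcal{V}}_r(r)=\bar V_\xi(\phi(r))\dot\phi(r)=0$, so that discrepancy never enters the integrand of \eqref{eq:ODE3}.

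Now fix $t$ and set $F(\eta):=\big(1-\alpha(y(\tau(\eta),\eta))\chi_{\{\omega\mid t\geq\tau(\omega)>0\}}(\eta)\big)\bar V_\xi(\eta)$, so $|F|\le\bar V_\xi\in L^1(\R)$ and thus $F\in L^1(\R)$. Applying the change of variables Theorem~\ref{thm:changeVar} with $g=\phi$ (Lipschitz and increasing by Proposition~\ref{prop:propertiesmaps}) and letting the lower endpoint tend to $-\infty$ (legitimate since $F\in L^1$) gives
\begin{equation*}
	\mathcal{V}(t,r)=V(t,\phi(r))=\int_{-\infty}^{\phi(r)}\!F(\eta)\,d\eta=\int_{-\infty}^{r}\!F(\phi(s))\dot\phi(s)\,ds.
\end{equation*}
Finally, using $\bar{\mathcal{V}}_r(s)=\bar V_\xi(\phi(s))\dot\phi(s)$ (Theorem~\ref{thm:chainRule}), that $\tilde\tau_\phi(s)=\tau(\phi(s))$ and $\mathcal{Y}(\tilde\tau_\phi(s),s)=y(\tau(\phi(s)),\phi(s))$ wherever $\dot\phi(s)>0$, and that both $F(\phi(s))\dot\phi(s)$ and $\big(1-\alpha(\mathcal{Y}(\tilde\tau_\phi(s),s))\chi_{\{\omega\mid t\geq\tilde\tau_\phi(\omega)>0\}}(s)\big)\bar{\mathcal{V}}_r(s)$ vanish where $\dot\phi(s)=0$, the two integrands agree a.e., i.e. $\mathcal{X}(t,\cdot)$ satisfies \eqref{eq:ODE3}. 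Hence $\mathcal{X}(t,\cdot)=S_t(\bar X)\bullet\phi$ solves \eqref{eq:LagrSystem}--\eqref{eq:tau} with initial datum $\bar X\bullet\phi\in\mathfrak{F}^\alpha$, and by uniqueness it equals $S_t(\bar X\bullet\phi)$, which is the first identity in \eqref{eq:identitiesCom}; running the same argument with $\psi$ (again Lipschitz and increasing by Proposition~\ref{prop:propertiesmaps}, and with $\dot\psi$ vanishing precisely where the relevant differentiated datum of $\bar X_{\Dx}$ does) yields the second. The main obstacle is exactly the bookkeeping around $\tilde\tau_\phi$ on $\{\dot\phi=0\}$ together with the limit passage in Theorem~\ref{thm:changeVar}; everything else is a formal consequence of the chain rule.
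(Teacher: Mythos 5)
Your proposal is correct and follows essentially the route the paper itself takes: the paper simply cites the relabeling-equivariance result \cite[Prop. 3.7]{AlphaHS}, whose proof is exactly your verification that the composed tuple solves \eqref{eq:LagrSystem}--\eqref{eq:tau} with the composed data, followed by uniqueness. What you add is an explicit account of the "slight modifications" the paper leaves implicit — that $\phi,\psi$ are only non-strictly increasing, that the chain rule (Theorem~\ref{thm:chainRule}) and change of variables (Theorem~\ref{thm:changeVar}) replace the smooth-relabeling computations, and that the mismatch of the wave-breaking function on $\{\dot\phi=0\}$ is harmless since $\bar{\mathcal{V}}_r$ vanishes there — which is consistent with the paper's conventions in \eqref{Tau}--\eqref{Taux} and \eqref{rep:Xr}.
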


The proof, up to some slight modifications, coincides with the one of \cite[Prop. 3,7]{AlphaHS}, which we do not repeat here. Note that after using \eqref{def:cX}, \eqref{def:cXdx}, \eqref{def:mX}, and \eqref{def:mXDx}, Lemma~\ref{lem:commutation} rewrites as 
\begin{equation}\label{eq:identitiesCom2}
\mathcal{X}(t,r)= S_t(\bar {\mathcal{X}})(r)  \quad \text{ and }\quad  \hat {\mathcal{X}}_{\Dx}(t,r)= S_t(\bar {\mathcal{X}}_{\Dx})(r),
\end{equation}
that is, $\mathcal{X}(t)$ and $\hat{\mathcal{X}}_{\Dx}(t)$ solve \eqref{eq:LagrSystem} with the wave breaking functions
\begin{equation}\label{Tau}
\T(r)= \begin{cases} \tau(\phi(r)), & \quad r \in \mathcal{B}^c\\
0, & \quad otherwise,
\end{cases}
\end{equation}
and 
\begin{equation}\label{Taux}
\T_{\Dx}(r)= \begin{cases} \tau_{\Dx}(\psi(r)), & \quad r \in \mathcal{B}^c\\
0, & \quad otherwise,
\end{cases}
\end{equation}
respectively.

We are now ready to take the next step towards a convergence rate for \eqref{eq:trouble}, by establishing the following theorem, which eventually allows us to take advantage of the coordinates $\mathcal{X}(t,r)$ and $\hat{\mathcal{X}}_{\Dx}(t,r)$. 

\begin{theorem}\label{thm:Aprior1}
	Suppose $(\bar{u}, \bar{\mu}, \bar{\nu}) \in \D^{\alpha}_0$ and $t\in [0,T]$. Let $\bar{X} = L((\bar{u}, \bar{\mu}, \bar{\nu}))$ and  $\bar{X}_{\Dx} = L \circ P_{\Dx}((\bar u, \bar \mu, \bar \nu))$. Moreover, introduce $X(t) = S_t (\bar{X})$, $\hat X_{\Dx}(t) = S_t(\bar{X}_{\Dx})$, $\mathcal{X}(t)= X(t)\bullet \phi$, and $\hat{\mathcal{X}}_{\Dx}(t)= \hat X_{\Dx}(t) \bullet \psi$ with $\phi$ and $\psi$ from Definition~\ref{def:maps}, then
	\begin{align*}
	\bigg|\!\int_0^t\! \! \Big ( \int_{-\infty}^{\xi} \! \!\big(V_{\xi}&- \hat V_{\Dx, \xi}\big)(s, \eta) d\eta -  \int_{\xi}^{\infty} \! \!\big(V_{\xi} - \hat V_{\Dx, \xi}\big)(s, \eta) d\eta \Big)ds \bigg| \\
	& \leq 4t\Dx + 4t \|\delta_{2\Dx}\bar{u}_x^2\|_1 + \int_0^t  \|\mathcal{V}_r(s) -\hat{\mathcal{V}}_{\Dx,r} (s)\|_{L^1(\mathcal{B}^c)}ds,
	\end{align*}
where $\delta_{2\Dx}f(x)= f(x+2\Dx)-f(x)$.
\end{theorem}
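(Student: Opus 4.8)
The idea is to turn the left-hand side into a single integral against the sign kernel, transport it through the change of variables generated by $\phi$ and $\psi$, and then exploit the local mass identity of Lemma~\ref{lem:coincidingLengths} together with the explicit behaviour of $\phi,\psi$ on $\mathcal{B}$. Fix $s\in[0,t]$ and $\xi\in\R$, and recall from \eqref{eq:identitiesCom2} that $\mathcal{X}(s)=X(s)\bullet\phi$ and $\hat{\mathcal{X}}_{\Dx}(s)=\hat X_{\Dx}(s)\bullet\psi$ solve \eqref{eq:LagrSystem}, so that $\mathcal{V}(s,r)=V(s,\phi(r))$ and $\hat{\mathcal{V}}_{\Dx}(s,r)=\hat V_{\Dx}(s,\psi(r))$. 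Since $(V_\xi-\hat V_{\Dx,\xi})(s,\cdot)\in L^1(\R)$ (as $0\le V_\xi\le H_\xi$ and $0\le\hat V_{\Dx,\xi}\le\bar H_{\Dx,\xi}$),
\begin{equation*}
\int_{-\infty}^\xi\!\big(V_\xi-\hat V_{\Dx,\xi}\big)(s,\eta)\,d\eta-\int_\xi^\infty\!\big(V_\xi-\hat V_{\Dx,\xi}\big)(s,\eta)\,d\eta=\int_\R\sgn(\xi-\eta)\big(V_\xi-\hat V_{\Dx,\xi}\big)(s,\eta)\,d\eta=:W(s,\xi).
\end{equation*}
By the chain rule (Theorem~\ref{thm:chainRule}), $\mathcal{V}_r(s,r)=V_\xi(s,\phi(r))\dot\phi(r)$ and $\hat{\mathcal{V}}_{\Dx,r}(s,r)=\hat V_{\Dx,\xi}(s,\psi(r))\dot\psi(r)$ (interpreted as $0$ where the inner derivative vanishes), and since $\phi-\id,\psi-\id\in W^{1,\infty}(\R)$, the change of variables $\eta=\phi(\rho)$, resp.\ $\eta=\psi(\rho)$ (Theorem~\ref{thm:changeVar}), gives
\begin{equation*}
W(s,\xi)=\int_\R\sgn(\xi-\phi(\rho))\,\mathcal{V}_r(s,\rho)\,d\rho-\int_\R\sgn(\xi-\psi(\rho))\,\hat{\mathcal{V}}_{\Dx,r}(s,\rho)\,d\rho.
\end{equation*}

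Writing $\mathcal{V}_r=(\mathcal{V}_r-\hat{\mathcal{V}}_{\Dx,r})+\hat{\mathcal{V}}_{\Dx,r}$ in the first integral splits $W(s,\xi)=I_1+I_2$ with $I_1=\int_\R\sgn(\xi-\phi)(\mathcal{V}_r-\hat{\mathcal{V}}_{\Dx,r})(s,\cdot)\,d\rho$ and $I_2=\int_\R[\sgn(\xi-\phi)-\sgn(\xi-\psi)]\hat{\mathcal{V}}_{\Dx,r}(s,\cdot)\,d\rho$. The contribution of $I_1$ over $\mathcal{B}^c$ is at most $\|\mathcal{V}_r(s)-\hat{\mathcal{V}}_{\Dx,r}(s)\|_{L^1(\mathcal{B}^c)}$. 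On $\mathcal{B}=\bigcup_j(\mathcal{B}_j\cup\mathcal{B}_{\Dx,j})$, I invoke \eqref{PhiPB}, \eqref{PsiPB}, \eqref{cond:WB}, \eqref{cond:WBD}: $\mathcal{V}_r(s,\cdot)=0$, $\hat{\mathcal{V}}_{\Dx,r}(s,\cdot)=2$, $\phi\equiv\xi_{3j}$, $\dot\psi=2$ on $\mathcal{B}_{\Dx,j}$, while $\mathcal{V}_r(s,\cdot)=2$, $\hat{\mathcal{V}}_{\Dx,r}(s,\cdot)=0$, $\dot\phi=2$ on $\mathcal{B}_j$. Combining this with $\phi(\mathcal{B}_j)=\mathcal{S}_j$ and $\psi(\mathcal{B}_{\Dx,j})=\mathcal{S}_{\Dx,j}$ (cf.\ \eqref{phi:BS}, \eqref{psi:BS}), one more application of Theorem~\ref{thm:changeVar}, and Lemma~\ref{lem:coincidingLengths} (through $\mathrm{meas}(\mathcal{S}_j)=\mathrm{meas}(\mathcal{S}_{\Dx,j})$, hence $\mathrm{meas}(\mathcal{S})=\mathrm{meas}(\mathcal{S}_{\Dx})$), the $\mathcal{B}$-parts of $I_1$ and $I_2$ combine so that the pieces proportional to $\sgn(\xi-\xi_{3j})\,\mathrm{meas}(\mathcal{B}_{\Dx,j})$ cancel term by term, and one is left with
\begin{equation*}
W(s,\xi)=I_1\big|_{\mathcal{B}^c}+\Big(\int_{\mathcal{S}}\!\sgn(\xi-\eta)\,d\eta-\int_{\mathcal{S}_{\Dx}}\!\sgn(\xi-\eta)\,d\eta\Big)+\int_{\mathcal{B}^c}\!\big[\sgn(\xi-\phi)-\sgn(\xi-\psi)\big]\hat{\mathcal{V}}_{\Dx,r}(s,\cdot)\,d\rho.
\end{equation*}

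It remains to bound the last two terms by $\mathcal{O}(\Dx)+\mathcal{O}(\|\delta_{2\Dx}\bar u_x^2\|_1)$. For the middle one, using $\bar y+\bar H=\id$, $d\bar\mu_{\mathrm{ac}}=\bar u_x^2\,dx$, and the representation \eqref{rep:Xxi}, one computes $\mathrm{meas}(\mathcal{S}\cap(-\infty,\xi))=\xi-\bar y(\xi)-\bar F_{\mathrm{ac}}(\bar y(\xi))$ and the analogue for $\mathcal{S}_{\Dx}$, so that the middle term equals $2\big[(\bar y_{\Dx}-\bar y)(\xi)+\bar F_{\Dx,\mathrm{ac}}(\bar y_{\Dx}(\xi))-\bar F_{\mathrm{ac}}(\bar y(\xi))\big]$; the first summand has modulus at most $2\Dx$ by Lemma~\ref{lem:initialRatesLagr}, and the second is controlled by $\|\delta_{2\Dx}\bar u_x^2\|_1$ after splitting at the even gridpoint $x_{2J}$ below $\bar y_{\Dx}(\xi)$, using that $P_{\Dx}$ preserves $\bar F_{\mathrm{ac}}$ at even gridpoints and that $\int_{x_{2j}}^{x_{2j+2}}\bar u_{\Dx,x}^2=\int_{x_{2j}}^{x_{2j+2}}\bar u_x^2$ (Definition~\ref{def:ProjOperator}), together with the elementary bound $\int_c^{c+2\Dx}f\le\|\delta_{2\Dx}f\|_1$ valid for $0\le f\in L^1(\R)$ (which follows from $\int_c^{c+2\Dx}f=\int_{\R}\big(f(x)-f(x-2\Dx)\big)\chi_{(-\infty,c+2\Dx)}(x)\,dx$). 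For the last term, its integrand vanishes unless $\xi$ lies strictly between $\phi(\rho)$ and $\psi(\rho)$; since $\phi(\rho)+\psi(\rho)=2\rho$ and $\bar y(\phi(\rho))=\bar y_{\Dx}(\psi(\rho))=\bar{\mathcal{Y}}(\rho)$ by \eqref{eq:Y}, monotonicity of $\bar y,\bar y_{\Dx}$ then forces $\bar{\mathcal{Y}}(\rho)$ into the interval with endpoints $\bar y(\xi),\bar y_{\Dx}(\xi)$, which has length $\le2\Dx$ by Lemma~\ref{lem:initialRatesLagr}. Bounding $\hat{\mathcal{V}}_{\Dx,r}\le\bar{\mathcal{H}}_{\Dx,r}=\bar u_{\Dx,x}^2(\bar{\mathcal{Y}})\,\bar{\mathcal{Y}}_r$ on $\mathcal{B}^c$ (cf.\ \eqref{rep:Xr}), changing variables $x=\bar{\mathcal{Y}}(\rho)$ (Theorem~\ref{thm:changeVar}), and again invoking the cell-mass identity together with $\int_c^{c+2\Dx}f\le\|\delta_{2\Dx}f\|_1$ bounds this term by a fixed multiple of $\|\delta_{2\Dx}\bar u_x^2\|_1$. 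Collecting the three estimates, using $\|\mathcal{V}_r(s)-\hat{\mathcal{V}}_{\Dx,r}(s)\|_{L^1(\mathcal{B}^c)}$ for $I_1|_{\mathcal{B}^c}$, and integrating over $s\in[0,t]$ yields the assertion.

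\textbf{Main obstacle.} The delicate part is the second half: organizing the $\mathcal{B}$-contributions so that, away from the single cell $[\xi_{3J},\xi_{3J+3})$ containing $\xi$, the residual pieces cancel via $\mathrm{meas}(\mathcal{S}_j)=\mathrm{meas}(\mathcal{S}_{\Dx,j})$, and then showing that what is left — the mismatch in where $\bar\mu_{\mathrm{sing}}$ is located for the exact versus the projected data inside that cell, and the sign discrepancy between $\phi$ and $\psi$ on $\mathcal{B}^c$ — is pinned to an $\mathcal{O}(\Dx)$ window in Eulerian coordinates around $\bar y(\xi)$ via the identity $\bar{\mathcal{Y}}(r)=\bar y(\phi(r))=\bar y_{\Dx}(\psi(r))$ and $\|\bar y-\bar y_{\Dx}\|_\infty\le2\Dx$, so that the energy available there is dominated by $\Dx$ and $\|\delta_{2\Dx}\bar u_x^2\|_1$. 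Reaching the sharp constants in the statement requires estimating the middle term and the last term jointly rather than one at a time.
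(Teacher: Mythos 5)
Your argument is correct and, carried out carefully, reproduces the stated bound with the exact constants; it is, however, organized differently from the paper's proof. The paper first establishes the cell-wise identity \eqref{eq:absPart12} (change of variables plus the cancellation $\mathrm{meas}(\mathcal{B}_j)=\mathrm{meas}(\mathcal{B}_{\Dx,j})$ from Lemma~\ref{lem:coincidingLengths}), so that every complete cell away from the one containing $\xi$ contributes only to $\|\mathcal{V}_r-\hat{\mathcal{V}}_{\Dx,r}\|_{L^1(\mathcal{B}^c)}$, and then keeps the single remaining cell in the original Lagrangian variable, splitting it over $\mathcal{S}^c\cap\mathcal{S}_{\Dx,N}$, $\mathcal{S}_N\cap\mathcal{S}_{\Dx}^c$ and $\mathcal{S}^c\cap[\xi_{3N+1},\xi_{3N+3})$ and using $0\le 1-V_\xi\le\bar y_\xi+\bar V_\xi$, $\int_{\xi_{3N}}^{\xi_{3N+3}}\bar y_\xi\,d\eta=2\Dx$ and $\int_{x_{2N}}^{x_{2N+2}}\bar u_x^2\,dx\le\|\delta_{2\Dx}\bar u_x^2\|_1$ to get $4t\Dx+4t\|\delta_{2\Dx}\bar u_x^2\|_1$. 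You instead transform the whole sign-kernel representation with $\phi$ and $\psi$, reduce the $\mathcal{B}$-contribution to $\int_{\mathcal{S}}\sgn(\xi-\eta)\,d\eta-\int_{\mathcal{S}_{\Dx}}\sgn(\xi-\eta)\,d\eta$ and evaluate it exactly via $\mathrm{meas}(\mathcal{S}\cap(-\infty,\xi))=\xi-\bar y(\xi)-\bar F_{\mathrm{ac}}(\bar y(\xi))$ (and its projected analogue), and control the kernel-mismatch term on $\mathcal{B}^c$ by localizing it to $\{\rho\mid \bar{\mathcal{Y}}(\rho)\in I_\xi\}$, $I_\xi$ being the interval between $\bar y(\xi)$ and $\bar y_{\Dx}(\xi)\subset[x_{2N},x_{2N+2}]$; the same ingredients (local preservation of $\bar\nu_{\mathrm{sing}}$ and of cell energy by $P_{\Dx}$, $\|\bar y-\bar y_{\Dx}\|_\infty\le2\Dx$) then give, pointwise in $s$, $2|\bar y_{\Dx}(\xi)-\bar y(\xi)|\le 4\Dx$, $2|\bar F_{\Dx,\mathrm{ac}}(\bar y_{\Dx}(\xi))-\bar F_{\mathrm{ac}}(\bar y(\xi))|\le 2\|\delta_{2\Dx}\bar u_x^2\|_1$ and $|I_2|_{\mathcal{B}^c}|\le 2\int_{x_{2N}}^{x_{2N+2}}\bar u_{\Dx,x}^2\,dx=2\int_{x_{2N}}^{x_{2N+2}}\bar u_x^2\,dx\le 2\|\delta_{2\Dx}\bar u_x^2\|_1$, so your closing concern about needing a joint estimate is unfounded — the separate bounds already sum to $4\Dx+4\|\delta_{2\Dx}\bar u_x^2\|_1$. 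What your route buys is that the cell containing $\xi$ is never revisited in the original variable and the inequality $1-V_\xi\le\bar y_\xi+\bar V_\xi$ is not needed; what the paper's route buys is that the only changes of variables are over full cells (endpoint values of $V$), whereas you should spell out the slightly more delicate steps you rely on: applying Theorem~\ref{thm:changeVar} against the sign kernel restricted to the subsets $\mathcal{B}_j$, $\mathcal{B}_{\Dx,j}$ (using $\dot\phi=2$ on $\mathcal{B}_j$, $\dot\psi=2$ on $\mathcal{B}_{\Dx,j}$ and \eqref{psi:BS}--\eqref{phi:BS}), and the fact that $\dot\phi,\dot\psi>0$ a.e.\ on $\mathcal{B}^c$ so that the sets $\{\phi=\xi\}$, $\{\psi=\xi\}$ do not contribute.
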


\begin{proof}
To this end, note that for any $j\in \mathbb{Z}$ one has, 
by Theorem~\ref{thm:changeVar}, \eqref{eq:coincideGrid}, \eqref{cond:WB}, \eqref{cond:WBD}, and Lemma~\ref{lem:coincidingLengths},
\begin{align}\nonumber
\int_{\xi_{3j}}^{\xi_{3j+3}} (V_\xi- \hat V_{\Dx, \xi})(s, \eta) d\eta & = \int_{\xi_{3j}}^{\xi_{3j+3}} (\mathcal{V}_r - \hat {\mathcal{V}}_{\Dx,r})(s,r) dr\\ \nonumber
& =- \int_{\mathcal{B}_{\Dx,j}} \hat{\mathcal{V}}_{\Dx,r}(s,r)dr + \int_{\mathcal{B}_{j}}\mathcal{V}_r(s,r) dr\\ \nonumber
&\qquad + \int_{[\xi_{3j}, \xi_{3j+3})\cap \mathcal{B}^c}(\mathcal{V}_r- \hat {\mathcal{V}}_{\Dx,r})(s,r) dr\\ \nonumber
& = -2\mathrm{meas}(\mathcal{B}_{\Dx,j})+ 2\mathrm{meas}(\mathcal{B}_{j})\\ \nonumber
& \qquad + \int_{[\xi_{3j}, \xi_{3j+3})\cap \mathcal{B}^c}(\mathcal{V}_r- \hat {\mathcal{V}}_{\Dx,r})(s,r) dr\\ \label{eq:absPart12}
& = \int_{[\xi_{3j}, \xi_{3j+3})\cap \mathcal{B}^c}(\mathcal{V}_r - \hat {\mathcal{V}}_{\Dx,r})(s,r) dr.
\end{align}
The above argument shows that for any $j \in \mathbb{Z}$ one has
\begin{align}\label{eq:absPart}
\biggl| \int_0^t\Big( \int_{-\infty}^{\xi_{3j}}(V_{\xi} - \hat V_{\Dx, \xi})(s, \eta)d\eta &-\int_{\xi_{3j+3}}^{\infty}(V_{\xi}-\hat V_{\Dx, \xi})(s, \eta)d\eta \Big)ds \biggr| \nonumber\\ 
&\quad  \leq  \int_0^t\int_{\mathcal{B}^c}\! \! \left|\mathcal{V}_r - \hat{\mathcal{V}}_{\Dx,r} \right|\!(s,r)drds,
\end{align}
and, as there exists, for any $\xi\in \R$, some $N \in \mathbb{Z}$ such that $\xi \in [\xi_{3N}, \xi_{3N+3})$, one obtains
\begin{align}\label{eq:VxiDiff}
		\biggl | \int_0^t \biggl (\int_{-\infty}^{\xi}(V_{\xi}&- \hat V_{\Dx, \xi})(s, \eta)d\eta  - \int_{\xi}^{\infty}(V_{\xi} - \hat V_{\Dx, \xi})(s, \eta)d\eta \biggr)ds \biggr| \nonumber 
		\\ & \leq \int_0^t\int_{\xi_{3N}}^{\xi_{3N+3}} \!|V_{\xi} - \hat V_{\Dx, \xi}|(s, \eta)d\eta ds   +  \int_0^t\int_{\mathcal{B}^c}\! \left|\mathcal{V}_r - \hat{\mathcal{V}}_{\Dx,r} \right| \!(s,r)drds. 
	\end{align}
It therefore remains to estimate the first term in \eqref{eq:VxiDiff}. The main idea, which is taken from the proof of \cite[Thm. 4.6]{AlphaRate}, is to decompose the integration domain in \eqref{eq:VxiDiff} by using the sets $\mathcal{S}$ and $\mathcal{S}_{\Dx}$ from \eqref{eq:S} and \eqref{eq:Sdx}, respectively. To this end, observe that according to \eqref{eq:LagrSystem}, we have, for all $t \in [0, T]$, 
\begin{align*}
	V_{\xi}(t, \xi) &= 1 \hspace{0.85cm} \text{for a.e. } \xi \in \mathcal{S}, \\
	\hat V_{\Dx, \xi}(t, \xi) &= 1 \hspace{0.85cm} \text{for a.e. } \xi \in \mathcal{S}_{\Dx},
\end{align*}
such that 
\begin{subequations}\label{eq:rest}
\begin{align}
	\int_0^t\!\int_{\xi_{3N}}^{\xi_{3N+3}} \!\!|V_{\xi} - \hat V_{\Dx, \xi}|(s, \eta)d\eta ds &\leq \! \int_0^t \!\int_{\mathcal{S}^c \cap \mathcal{S}_{\Dx,N}} \! \!\!\big(1-V_{\xi}(s, \eta) \big) d\eta ds \label{eq:rest1} \\
	&  \hspace{-0.45cm} + \int_0^t \!\int_{\mathcal{S}_N \cap \mathcal{S}_{\Dx}^c} \! \! \! \big(1 - \hat V_{\Dx, \xi}(s, \eta) \big)d\eta ds \label{eq:rest2}\\
	& \hspace{-0.45cm} + \int_0^t \! \int_{\mathcal{S}^c \cap[\xi_{3N+1}, \xi_{3N+3})} \! \!|V_{\xi} - \hat V_{\Dx, \xi}|(s, \eta)d\eta ds, \label{eq:rest3}
\end{align}
\end{subequations}
by \eqref{def:SSDxj}.

The definition of $V_\xi(t,\xi)$, see \eqref{eq:ODE3}, combined with $X(0,\xi)=\bar X(\xi)\in \F_0^\alpha$ implies, for any $\xi \in \mathcal{S}^c$,
\begin{align*}
0\leq 1- V_\xi(t,\xi)& = \begin{cases} 1- \bar V_\xi(\xi), & t<\tau(\xi),\\ 1-(1-\alpha(y(\tau(\xi),\xi)))\bar V_\xi(\xi), & \tau(\xi)\leq t, \end{cases}\\
&  =\begin{cases}  \bar y_\xi(\xi) , & t<\tau(\xi),\\ \bar y_\xi(\xi)+ \alpha(y(\tau(\xi), \xi))\bar V_\xi(\xi), & \tau(\xi)\leq t, \end{cases}
\end{align*}
and, in particular, 
\begin{equation*}
0\leq 1-V_\xi(t,\xi)\leq \bar y_\xi(\xi)+ \bar V_\xi(\xi) \quad \text{ for }\xi \in \mathcal{S}^c\cap \mathcal{S}_{\Dx,N},
\end{equation*}
which in turn yields 
\begin{align}\label{semiresult}
	\int_0^t \!\int_{\mathcal{S}^c \cap \mathcal{S}_{\Dx,N}}\!\! \big(1-V_{\xi}(s, \eta) \big) d\eta ds 
	 \leq t\int_{\mathcal{S}^c \cap \mathcal{S}_{\Dx,N}} \! \! \big(\bar y_\xi(\eta)+ \bar V_\xi(\eta) \big) d\eta.
	\end{align}
Since $\bar y(\xi_{3j})=x_{2j}$ for all $j\in \mathbb{Z}$, cf. \eqref{eq:coincideEnd}, it follows that 
\begin{equation}\label{eq:resultRest11}
	 \int_{\mathcal{S}^c \cap \mathcal{S}_{\Dx,N}}\bar{y}_{\xi}(\eta)d\eta \leq \int_{\xi_{3N}}^{\xi_{3N+3}}\bar{y}_{\xi}(\eta)d\eta = 2\Dx. 
\end{equation}
For the second integral, combining \eqref{rep:Xxi} and Theorem~\ref{thm:changeVar} leads to the following estimate 
\begin{align}\label{eq:resultRest21}
	\int_{\mathcal{S}^c \cap \mathcal{S}_{\Dx,N}} \! \bar{V}_{\xi}(\eta)d\eta &\leq \int_{\mathcal{S}^c\cap[\xi_{3N}, \xi_{3N+3})}\bar{u}_x^2(\bar{y}(\eta))\bar{y}_{\xi}(\eta)d\eta \nonumber
	\\ & \leq \int_{x_{2j}}^{x_{2j+2}}\bar{u}_x^2(z)dz \nonumber
	\\ & =  \Big( \int_{-\infty}^{x_{2j+2}}\bar{u}_x^2(z)dz - \int_{-\infty}^{x_{2j}}\bar{u}_x^2(z)dz \Big) \nonumber
	\\ & \leq  \|\delta_{2\Dx}\bar{u}_x^2 \|_1.
\end{align}
After combining \eqref{semiresult}--\eqref{eq:resultRest21} we end up with 
\begin{align}\label{eq:VxiDiff4}
	\int_0^t \!\int_{\mathcal{S}^c \cap \mathcal{S}_{\Dx,N}}\!\! \big(1-V_{\xi}(s, \eta)\big)d\eta ds 
	 \leq 2t\Dx +  t\|\delta_{2\Dx}\bar{u}_x^2 \|_1 .
	\end{align}

To estimate \eqref{eq:rest2} from above, we can proceed in the same way, because  
\begin{equation}\label{eq:CoincideFac}
\int_{-\infty}^{x_{2j}}\bar{u}_{\Dx, x}^2(z)dz = \bar{F}_{\Dx, \mathrm{ac}}(x_{2j}) = \bar{F}_{\mathrm{ac}}(x_{2j}) = \int_{-\infty}^{x_{2j}}\bar{u}_x^2(z)dz \hspace{0.4cm} \text{for all } j \in \mathbb{Z}, 
\end{equation} 
such that 
\begin{align}\label{eq:VxiDiff5}
	\int_0^t \!\int_{\mathcal{S}_N \cap \mathcal{S}_{\Dx}^c}\!\! \big(1-\hat V_{\Dx, \xi}(s, \eta) \big) d\eta ds 
	 \leq 2t\Dx +  t\|\delta_{2\Dx}\bar{u}_x^2 \|_1 .
	\end{align}

It remains to bound \eqref{eq:rest3}. Here we use  that $V_{\xi}(s, \xi) \leq \bar{V}_{\xi}(\xi)$  and $V_{\Dx, \xi}(s, \xi) \leq \bar{V}_{\Dx, \xi}(\xi)$ by \eqref{eq:LagrSystem} together with \eqref{eq:nuac}, \eqref{eq:nudac}, \eqref{eq:coincideEnd}, and \eqref{eq:CoincideFac}
 to obtain 
\begin{align*}
	\int_0^t \! \int_{\mathcal{S}^c \cap[\xi_{3N+1}, \xi_{3N+3})}&  \! \!|V_{\xi} - \hat V_{\Dx, \xi}|(s, \eta)d\eta ds \nonumber\\
	& \qquad \leq t \!\left( \int_{\mathcal{S}^c \cap  [\xi_{3N+1}, \xi_{3N+3})} \! \bar V_\xi (\eta) d\eta + \int_{ [\xi_{3N+1}, \xi_{3N+3})} \! \bar V_{\Dx, \xi}(\eta) d\eta\right) \nonumber \\
	& \qquad \leq t \! \left( \int_{x_{2j}}^{x_{2j+2}}( \bar u_x^2 + \bar u_{\Dx, x}^2 )(x) dx\right) \nonumber \\ 
	& \qquad \leq 2t \|\delta_{2\Dx}\bar{u}_x^2 \|_1.
\end{align*}
Combining the above estimate with \eqref{eq:VxiDiff}, \eqref{eq:rest}, \eqref{eq:VxiDiff4}, and \eqref{eq:VxiDiff5} finishes the proof. 
\end{proof}

\begin{corollary}\label{cor:Aprior1}
Suppose $(\bar{u}, \bar{\mu}, \bar{\nu}) \in \D^{\alpha}_0$ and $t\in [0,T]$. Let $\bar{\mathcal{X}} = (L((\bar{u}, \bar{\mu}, \bar{\nu})))\bullet \phi$ and $\bar{\mathcal{X}}_{\Dx} = (L \circ P_{\Dx}((\bar u, \bar \mu, \bar \nu)))\bullet \psi$, where $\phi$ and $\psi$ are given by Definition~\ref{def:maps}. Moreover, introduce $\mathcal{X}(t)= S_t (\bar{\mathcal{X}})$ and $\hat{\mathcal{X}}_{\Dx}(t)= S_t(\bar{\mathcal{X}}_{\Dx})$, then 
\begin{align*}
\! \! \bigg|  \int_{-\infty}^{r} \! \!\big(\mathcal{V}_{r}- \hat{\mathcal{V}}_{\Dx, r}\big)(t, \eta) d\eta & -  \int_{r}^{\infty} \! \!\big(\mathcal{V}_{r} - \hat{\mathcal{V}}_{\Dx, r}\big)(t, \eta) d\eta \bigg|\\
&  \leq \| \mathcal{V}_r(t)- \hat{\mathcal{V}}_{\Dx,r}(t)\|_{L^1(\mathcal{B}^c)}+ 2\bar \nu_{\mathrm{sing}} \big([x_{2j}, x_{2j+2})\big). 
\end{align*}
\end{corollary}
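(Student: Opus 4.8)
The plan is to localize $r$ to a single Lagrangian gridcell and then exploit the cancellation that was already isolated in the proof of Theorem~\ref{thm:Aprior1}. Using \eqref{eq:coincideGrid}, I would first fix $N\in\mathbb{Z}$ with $r\in[\xi_{3N},\xi_{3N+3})$; the aim is then to prove the asserted estimate with $j=N$ (the index $j$ in the statement being, as usual, the one for which $r\in[\xi_{3j},\xi_{3j+3})$). By \eqref{eq:identitiesCom2} we have $\mathcal{X}(t)=X(t)\bullet\phi$ and $\hat{\mathcal{X}}_{\Dx}(t)=\hat X_{\Dx}(t)\bullet\psi$, and, as remarked after \eqref{def:mXDx}, both tuples lie in $\mathfrak{F}^{\alpha}$. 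In particular $0\leq\mathcal{V}_r(t)\leq\mathcal{H}_r(t)$ and $0\leq\hat{\mathcal{V}}_{\Dx,r}(t)\leq\hat{\mathcal{H}}_{\Dx,r}(t)$ with $\mathcal{H}(t)$, $\hat{\mathcal{H}}_{\Dx}(t)$ bounded and increasing, so $\mathcal{V}_r(t),\hat{\mathcal{V}}_{\Dx,r}(t)\in L^1(\R)$ and every integral below is finite.

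The key input is the cellwise cancellation already contained in the proof of Theorem~\ref{thm:Aprior1}: for every $j\in\mathbb{Z}$,
\[
	\int_{\xi_{3j}}^{\xi_{3j+3}}(\mathcal{V}_r-\hat{\mathcal{V}}_{\Dx,r})(t,\eta)\,d\eta=\int_{[\xi_{3j},\xi_{3j+3})\cap\mathcal{B}^c}(\mathcal{V}_r-\hat{\mathcal{V}}_{\Dx,r})(t,\eta)\,d\eta,
\]
cf.\ \eqref{eq:absPart12}. This rests on \eqref{cond:WB}--\eqref{cond:WBD}, which, together with $0\leq\mathcal{V}_r\leq\mathcal{H}_r$ and $0\leq\hat{\mathcal{V}}_{\Dx,r}\leq\hat{\mathcal{H}}_{\Dx,r}$, force $(\mathcal{V}_r,\hat{\mathcal{V}}_{\Dx,r})(t,\cdot)=(0,2)$ a.e.\ on $\mathcal{B}_{\Dx,j}$ and $(\mathcal{V}_r,\hat{\mathcal{V}}_{\Dx,r})(t,\cdot)=(2,0)$ a.e.\ on $\mathcal{B}_j$, so the two contributions cancel by $\mathrm{meas}(\mathcal{B}_j)=\mathrm{meas}(\mathcal{B}_{\Dx,j})$ (Lemma~\ref{lem:coincidingLengths}). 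I would record the by-product that $|\mathcal{V}_r-\hat{\mathcal{V}}_{\Dx,r}|(t,\cdot)=2$ a.e.\ on $\mathcal{B}\cap[\xi_{3N},\xi_{3N+3})=\mathcal{B}_N\cup\mathcal{B}_{\Dx,N}$, and that $\mathrm{meas}(\mathcal{B}\cap[\xi_{3N},\xi_{3N+3}))=\bar\nu_{\mathrm{sing}}([x_{2N},x_{2N+2}))$, again by Lemma~\ref{lem:coincidingLengths}.

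Finally I would split $(-\infty,r)=\big(\bigcup_{j<N}[\xi_{3j},\xi_{3j+3})\big)\cup[\xi_{3N},r)$ and $(r,\infty)=(r,\xi_{3N+3})\cup\big(\bigcup_{j>N}[\xi_{3j},\xi_{3j+3})\big)$. On the full gridcells the displayed cancellation identity applies, and the absolute value of the ensuing telescoping difference is at most $\int_{\mathcal{B}^c\setminus[\xi_{3N},\xi_{3N+3})}|\mathcal{V}_r-\hat{\mathcal{V}}_{\Dx,r}|(t,\eta)\,d\eta$. For the two partial pieces $[\xi_{3N},r)$ and $(r,\xi_{3N+3})$ a plain triangle inequality, followed by splitting into the $\mathcal{B}^c$- and $\mathcal{B}$-parts, bounds their contribution by $\int_{[\xi_{3N},\xi_{3N+3})\cap\mathcal{B}^c}|\mathcal{V}_r-\hat{\mathcal{V}}_{\Dx,r}|(t,\eta)\,d\eta+2\,\mathrm{meas}(\mathcal{B}\cap[\xi_{3N},\xi_{3N+3}))$. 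Adding the two $L^1(\mathcal{B}^c)$-pieces yields $\|\mathcal{V}_r(t)-\hat{\mathcal{V}}_{\Dx,r}(t)\|_{L^1(\mathcal{B}^c)}$, and the remaining term equals $2\bar\nu_{\mathrm{sing}}([x_{2N},x_{2N+2}))$, which completes the proof. The only delicate point is the bookkeeping on the single cell $[\xi_{3N},\xi_{3N+3})$ that contains $r$: there the $\mathcal{B}$-part does \emph{not} cancel (in contrast to the full cells, where the $\mathcal{B}_j$- and $\mathcal{B}_{\Dx,j}$-contributions have opposite signs and equal mass), and one has to check that it contributes exactly the singular term $2\bar\nu_{\mathrm{sing}}([x_{2N},x_{2N+2}))$ and nothing more.
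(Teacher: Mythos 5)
Your proposal is correct and follows essentially the same route as the paper's proof: the full cells away from $[\xi_{3N},\xi_{3N+3})$ are handled by the cancellation identity \eqref{eq:absPart12}, and on the cell containing $r$ one uses \eqref{cond:WB}--\eqref{cond:WBD} together with Lemma~\ref{lem:coincidingLengths} to bound the non-cancelling $\mathcal{B}$-contribution by $2\bar\nu_{\mathrm{sing}}([x_{2N},x_{2N+2}))$. Your reading of the index $j$ as the cell index of $r$, and the bookkeeping of the two partial pieces $[\xi_{3N},r)$ and $(r,\xi_{3N+3})$, match the paper's argument.
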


\begin{proof}
By \eqref{eq:absPart12}, we have 
\begin{align*}\nonumber
\bigg|  \int_{-\infty}^{\xi_{3j}}(\mathcal{V}_{r} &- \hat{\mathcal{V}}_{\Dx, r})(t, \eta)d\eta -\int_{\xi_{3j+3}}^{\infty}(\mathcal{V}_r-\hat{\mathcal{V}}_{\Dx, r} )(t, \eta)d\eta \bigg| \\
 &  \hspace{-0.2cm}\leq \|\mathcal{V}_r(t) - \hat{\mathcal{V}}_{\Dx,r} (t)\|_{L^1(\mathcal{B}^c \cap(-\infty, \xi_{3j}))} + \|\mathcal{V}_r(t) - \hat{\mathcal{V}}_{\Dx,r} (t)\|_{L^1(\mathcal{B}^c \cap[\xi_{3j+3}, \infty))} ,
\end{align*}
while, using \eqref{cond:WB}, \eqref{cond:WBD}, and Lemma~\ref{lem:coincidingLengths} yields 
\begin{align*}
\bigg|  \int_{\xi_{3j}}^r(\mathcal{V}_{r} &- \hat{\mathcal{V}}_{\Dx,r})(t, \eta)d\eta
-\int_r^{\xi_{3j+3}}(\mathcal{V}_r-\hat{\mathcal{V}}_{\Dx, r})(t, \eta)d\eta \bigg| \\
& \leq \|\mathcal{V}_r(t) - \hat{\mathcal{V}}_{\Dx,r} (t)\|_{L^1(\mathcal{B}^c \cap [\xi_{3j}, \xi_{3j+3}))}+ 2(\mathrm{meas}(\mathcal{B}_j)+\mathrm{meas}(\mathcal{B}_{\Dx,j}))\\
&\leq \|\mathcal{V}_r (t)- \hat{\mathcal{V}}_{\Dx,r} (t)\|_{L^1(\mathcal{B}^c \cap [\xi_{3j}, \xi_{3j+3}))}+2\bar\nu_{\mathrm{sing}}\big([x_{2j}, x_{2j+2})\big). \qedhere
\end{align*}
 \end{proof}

To further improve Theorem~\ref{thm:Aprior1}, we need the following proposition, which provides upper bounds on the size of the sets along which wave breaking has already occurred. 

\begin{prop}\label{prop:measureBound}
	Suppose $(\bar{u}, \bar{\mu}, \bar{\nu}) \in \D^{\alpha}_0$ and $t\in [0,T]$. Let $\bar{\mathcal{X}} = (L((\bar{u}, \bar{\mu}, \bar{\nu})))\bullet \phi$ and $\bar{\mathcal{X}}_{\Dx} = (L \circ P_{\Dx}((\bar u, \bar \mu, \bar \nu)))\bullet \psi$, where $\phi$ and $\psi$ are given by Definition~\ref{def:maps}. Moreover, introduce $\mathcal{X}(t)= S_t (\bar{\mathcal{X}})$ and $\hat{\mathcal{X}}_{\Dx}(t)= S_t(\bar{\mathcal{X}}_{\Dx})$, then \vspace{-0.1cm}
	\begin{equation*}
		\mathrm{meas}\big( \{r \! \mid \T(r) \leq t \} \big)\! \leq \!\Big(1 + \frac{1}{4}t^2 \Big)\bar{G}_{\infty},
	\end{equation*}
	and \vspace{-0.175cm}
	\begin{equation*}
	\mathrm{meas}\big( \{r \! \mid  \T_{\Dx}(r) \leq t \} \big) \! \leq \! \Big(1 + \frac{1}{4}t^2 \Big)\bar{G}_{\infty}.
\end{equation*}
\end{prop}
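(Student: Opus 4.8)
My plan is to work directly with the Lagrangian system~\eqref{eq:LagrSystem} as it applies to $\mathcal{X}(t)$, whose wave breaking function is $\T$ from~\eqref{Tau}, recalling that by~\eqref{eq:identitiesCom2} the pair $\mathcal{X}(t) = S_t(\bar{\mathcal{X}})$ solves~\eqref{eq:LagrSystem}. The set $\{r \mid \T(r)\leq t\}$ is (up to a null set) the set where wave breaking of the exact Lagrangian solution has occurred by time $t$; it naturally splits as $\mathcal{B} \cup (\mathcal{B}^c \cap \{r \mid \tau(\phi(r)) \leq t\})$, since $\T = 0$ on $\mathcal{B}$. The set $\mathcal{B}$ is handled trivially by the computation already carried out: $2\,\mathrm{meas}(\mathcal{B}) = \bar\nu_{\mathrm{sing}}(\R) + \bar\nu_{\Dx,\mathrm{sing}}(\R) = 2\bar\nu_{\mathrm{sing}}(\R) \leq 2\bar{G}_\infty$, so $\mathrm{meas}(\mathcal{B}) \leq \bar\nu_{\mathrm{sing}}(\R)$. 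For the remaining part, I would use that on $\mathcal{B}^c$ the differentiated variable $\bar{\mathcal{H}}_r(r) = \bar u_x^2(\bar{\mathcal{Y}}(r))\bar{\mathcal{Y}}_r(r)$ by~\eqref{rep:Xr}, and that wave breaking along the characteristic labelled $r$ occurs precisely when $\mathcal{V}_r$ drops, i.e. on $\Omega_d(\bar{\mathcal{X}})$, where by Definition~\ref{def:LagSet}~(v) one has $\bar{\mathcal{V}}_r(r) = \bar{\mathcal{H}}_r(r)$.

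The key estimate is a pointwise lower bound on $\mathcal{H}_r(t,r) = \bar{\mathcal{H}}_r(r)$ (recall $\mathcal{H}_t = 0$) in terms of the indicator of $\{\T(r)\leq t\}$. On $\mathcal{B}^c \cap \Omega_d(\bar{\mathcal{X}})$ with $\tau(\phi(r)) \leq t$, the quantity $\bar{\mathcal{Y}}_r(r) \geq 0$ is small but controlled: from~\eqref{eq:tau}, $\tau(\phi(r)) = -2\bar{y}_\xi(\phi(r))/\bar{U}_\xi(\phi(r))$ when $\bar U_\xi < 0$, hence using~\eqref{rep:Xxi} and the chain rule, $\tau(\phi(r)) = -2/\bar u_x(\bar{\mathcal{Y}}(r))$, so $\tau(\phi(r)) \leq t$ is equivalent to $\bar u_x(\bar{\mathcal{Y}}(r)) \leq -2/t$, which gives $\bar u_x^2(\bar{\mathcal{Y}}(r)) \geq 4/t^2$. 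Therefore on this set $\bar{\mathcal{H}}_r(r) = \bar u_x^2(\bar{\mathcal{Y}}(r))\bar{\mathcal{Y}}_r(r) \geq (4/t^2)\bar{\mathcal{Y}}_r(r)$, i.e. $\bar{\mathcal{Y}}_r(r) \leq \tfrac14 t^2\, \bar{\mathcal{H}}_r(r)$. Integrating over $\mathcal{B}^c \cap \{\tau(\phi(\cdot))\leq t\}$ and adding the trivial bound $\mathrm{meas}(\mathcal{B}) \leq \bar\nu_{\mathrm{sing}}(\R)$, one obtains
\begin{align*}
	\mathrm{meas}\big(\{r \mid \T(r)\leq t\}\big) &\leq \mathrm{meas}(\mathcal{B}) + \int_{\mathcal{B}^c \cap \{\tau(\phi(\cdot))\leq t\}} \!\!\big(\bar{\mathcal{Y}}_r + \tfrac14 t^2\, \bar{\mathcal{H}}_r\big)(r)\,dr \\
	&\leq \bar\nu_{\mathrm{sing}}(\R) + \int_{\R}\bar{\mathcal{Y}}_r(r)\,dr\cdot 0 + \big(1 + \tfrac14 t^2\big)\!\int_{\mathcal{B}^c}\bar{\mathcal{H}}_r(r)\,dr,
\end{align*}
where I have absorbed the $\bar{\mathcal{Y}}_r$-contribution into the $\bar{\mathcal{H}}_r$ one after noting $\bar{\mathcal{Y}}_r \leq \bar{\mathcal{H}}_r$ fails in general, so more care is needed; see below. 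The cleaner route is: on $\mathcal{B}^c$ one has $\bar{\mathcal{Y}}_r + \bar{\mathcal{H}}_r$ controlled, and $\int_{\mathcal{B}^c}(\bar{\mathcal{Y}}_r + \bar{\mathcal{H}}_r)(r)\,dr = \int_{\mathcal{B}^c}\bar{\mathcal{X}}_r$-components, which by the change of variables Theorem~\ref{thm:changeVar} and~\eqref{eq:equivalentY} equals $\int_{\R}(1\,dx) $-type mass plus $\frac12(\bar G + \bar G_{\Dx})$-mass. Concretely, $\int_{\R}\bar{\mathcal{H}}_r(r)\,dr = \bar{\mathcal{H}}_\infty = \bar{G}_\infty$ (since $\bar{\mathcal{X}} = \bar X \bullet \phi$ and relabeling preserves asymptotes; and $\bar H_\infty = \bar G_\infty$), so $\int_{\mathcal{B}^c}\bar{\mathcal{H}}_r \leq \bar{G}_\infty$. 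Combining, $\mathrm{meas}(\{r \mid \T(r)\leq t\}) \leq \bar\nu_{\mathrm{sing}}(\R) + \tfrac14 t^2 \bar{G}_\infty \leq (1 + \tfrac14 t^2)\bar{G}_\infty$, using $\bar\nu_{\mathrm{sing}}(\R) \leq \bar\nu(\R) = \bar{G}_\infty$.

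For the numerical bound, the argument is verbatim the same with $\bar{\mathcal{X}}$ replaced by $\bar{\mathcal{X}}_{\Dx}$, $\T$ by $\T_{\Dx}$ from~\eqref{Taux}, and $\bar u_x$ by $\bar u_{\Dx,x}$ via the representation of $\bar{\mathcal{X}}_{\Dx,r}$ in the proof of Lemma~\ref{lem:boundHrUrgr}: on $\mathcal{B}^c$ one has $\bar{\mathcal{H}}_{\Dx,r}(r) = \bar u_{\Dx,x}^2(\bar{\mathcal{Y}}(r))\bar{\mathcal{Y}}_r(r)$, the identity $\tau_{\Dx}(\psi(r)) = -2/\bar u_{\Dx,x}(\bar{\mathcal{Y}}(r))$ on the breaking set, and $\int_{\R}\bar{\mathcal{H}}_{\Dx,r}(r)\,dr = \bar{H}_{\Dx,\infty} = \bar{G}_{\Dx,\infty} = \bar{G}_\infty$ since $P_{\Dx}$ preserves the total mass. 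The main obstacle I anticipate is the bookkeeping on the set $\mathcal{B}$ together with justifying that the asymptotic value $\int_{\R}\bar{\mathcal{H}}_r\,dr$ equals $\bar{G}_\infty$ despite $\phi,\psi \notin \mathcal{G}$: one must argue this from $\bar{\mathcal{H}}(r) = \bar{H}(\phi(r))$ with $\phi$ increasing, $\phi - \id \in W^{1,\infty}$, and $\phi(r) \to \pm\infty$ as $r \to \pm\infty$ (which follows from~\eqref{eq:coincideGrid}), so that $\lim_{r\to\infty}\bar{\mathcal{H}}(r) = \lim_{\xi\to\infty}\bar{H}(\xi) = \bar{H}_\infty$; and $\bar{\mathcal{H}}$ is Lipschitz, hence $\int_{\R}\bar{\mathcal{H}}_r = \bar{\mathcal{H}}_\infty - \bar{\mathcal{H}}_{-\infty} = \bar{G}_\infty - 0$. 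Once that is pinned down the rest is the elementary Oleĭnik-type estimate $\bar u_x(\bar{\mathcal{Y}}(r)) \leq -2/t$ on the breaking set.
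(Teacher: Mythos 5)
There is a genuine gap at the decisive step: you never correctly convert the Lebesgue measure of the breaking set into an integral of the differentiated Lagrangian quantities. Your display asserts $\mathrm{meas}\big(\mathcal{B}^c\cap\{\tau(\phi(\cdot))\leq t\}\big)\leq \int_{\mathcal{B}^c\cap\{\tau(\phi(\cdot))\leq t\}}\big(\bar{\mathcal{Y}}_r+\tfrac14 t^2\bar{\mathcal{H}}_r\big)(r)\,dr$, which would require $1\leq \bar{\mathcal{Y}}_r+\tfrac14 t^2\bar{\mathcal{H}}_r$ a.e.\ on that set; but there $\bar{\mathcal{Y}}_r\leq\tfrac14 t^2\bar{\mathcal{H}}_r$ and $\bar{\mathcal{H}}_r\leq\bar{\mathcal{Y}}_r+\bar{\mathcal{H}}_r=\dot\phi\leq 2$, so the integrand is at most $t^2$, and for $t<1$ it cannot dominate the constant $1$ unless the set is null. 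The "cleaner route" does not repair this: the conclusion $\mathrm{meas}(\{r\mid\T(r)\leq t\})\leq\bar\nu_{\mathrm{sing}}(\R)+\tfrac14 t^2\bar G_\infty$ does not follow from the ingredients you list, and it is false in general. Take, e.g., $\bar\mu=\bar\nu$ purely absolutely continuous with $\bar u_x=-c$ on $[0,1]$ and $0$ elsewhere: at $t=2/c$ the breaking set in the $r$-variable has measure of order $1+c^2$, whereas your bound gives $0+\tfrac14 t^2 c^2=1$. The correct bound must retain the $O(1)\cdot\bar G_\infty$ contribution, which is exactly the "$1$" in $(1+\tfrac14t^2)\bar G_\infty$.

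What is missing is the identity that plays the role of "$\,1=\bar y_\xi+\bar H_\xi\,$" from the unrelabelled setting: since $\bar X,\bar X_{\Dx}\in\F_0^{\alpha,0}$ and $\dot\phi+\dot\psi=2$, one has a.e.\ $2=\big(\bar{\mathcal{Y}}_r+\bar{\mathcal{H}}_r\big)+\big(\bar{\mathcal{Y}}_{\Dx,r}+\bar{\mathcal{H}}_{\Dx,r}\big)$, so $\mathrm{meas}(\{\T\leq t\})=\tfrac12\int_{\{\T\leq t\}}\big(\bar{\mathcal{Y}}_r+\bar{\mathcal{H}}_r+\bar{\mathcal{Y}}_{\Dx,r}+\bar{\mathcal{H}}_{\Dx,r}\big)\,dr$. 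Combining this with $\bar{\mathcal{Y}}_{\Dx,r}=\bar{\mathcal{Y}}_r$ (equation \eqref{prop:YYD}) and $\bar{\mathcal{Y}}_r\leq\tfrac14 t^2\bar{\mathcal{H}}_r$ on $\{\T\leq t\}$ (which you do obtain, on $\mathcal{B}^c$, via the Eulerian identity $\tau(\phi(r))=-2/\bar u_x(\bar{\mathcal{Y}}(r))$; the paper gets it for all such $r$ directly from $\mathcal{Y}_r(\T(r),r)=0=\mathcal{U}_r(\T(r),r)$) yields $1\leq\tfrac12\big((1+\tfrac12t^2)\bar{\mathcal{H}}_r+\bar{\mathcal{H}}_{\Dx,r}\big)$ on the breaking set, and integrating with $\int_\R\bar{\mathcal{H}}_r\,dr=\bar G_\infty=\int_\R\bar{\mathcal{H}}_{\Dx,r}\,dr$ gives $(1+\tfrac14t^2)\bar G_\infty$. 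Your ingredients (the pointwise relation on the breaking set, the total-energy identities, $\mathrm{meas}(\mathcal{B})=\bar\nu_{\mathrm{sing}}(\R)$) are fine, but without the symmetric use of \emph{both} tuples through $\dot\phi+\dot\psi=2$ the measure cannot be bounded by energy integrals, and your argument as written does not establish the proposition.
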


\begin{proof}
Our approach is based on the proof of \cite[Cor. 2.4]{AlphaHS}. Moreover, we only prove the upper bound on $\mathrm{meas}(\{ r\mid \T(r)\leq t\})$, since the other one is shown in exactly the same way. 
	
	Since  $\bar{X}$ and $\bar{X}_{\Dx}$ both belong to $\F_0^{\alpha, 0}$, we have, using \eqref{def:cX}, \eqref{def:cXdx}, and Proposition~\ref{prop:propertiesmaps}, 
	\begin{equation*}
		0 < (\bar{\mathcal{Y}}_r + \bar{\mathcal{H}}_r + \bar{\mathcal{Y}}_{\Dx,r} + \bar{\mathcal{H}}_{\Dx,r} \big) (r)= \big(\dot \phi + \dot\psi \big)(r) = 2.
	\end{equation*} 
	Furthermore, by \eqref{eq:LagrSystem} and \eqref{eq:identitiesCom2}, one has, for $r \in \R$ with $\T(r) \leq t$, 
	\begin{align} \nonumber
		0 &= \mathcal{Y}_r(\T(r),r) = \bar{\mathcal{Y}}_r (r)+ \T(r)\bar{\mathcal{U}}_{r}(r) + \frac{1}{4}\T^2(r)\bar{\mathcal{H}}_r(r), \\ \label{rel:TbU}
		0 &= \mathcal{U}_r(\T(r), r) = \bar{\mathcal{U}}_r(r) + \frac{1}{2}\T(r)\bar{\mathcal{H}}_r(r). 
	\end{align}
	Combining these, reveals that \vspace{-0.1cm}
	\begin{equation}\label{rel:TbY}
	\bar{\mathcal{Y}}_r(r) = \frac{1}{4}\T^2(r)\bar{\mathcal{H}}_r(r), 
	\end{equation}
	and we therefore get, using that $\bar{\mathcal{Y}}_r= \bar{\mathcal{Y}}_{\Dx,r}$, cf. \eqref{prop:YYD}, and \eqref{def:cXdx},
	\begin{align*}
		\mathrm{meas} \big(\{r \! \mid  \T(r) \leq t \} \big) & =  \frac{1}{2}\int_{\{r \mid \T(r) \leq t \}}\!\Big(\bar{\mathcal{Y}}_r + \bar{\mathcal{H}}_r + \bar{\mathcal{Y}}_{\Dx,r} + \bar{\mathcal{H}}_{\Dx,r} \Big) (r) dr 
		\\ &\leq \frac{1}{2} \int_{\R} \Big( \big(1+\frac{1}{2}t^2\big)\bar{\mathcal{H}}_r +  \bar{\mathcal{H}}_{\Dx, r}\Big)(r)dr.
	\end{align*} 
	Since $\bar{H}_{\Dx, \infty}   = \bar{G}_{\Dx, \infty} = \bar{G}_{\infty} = \bar{H}_{\infty}$ by Definition~\ref{def:MapL} and Definition~\ref{def:ProjOperator}, Theorem~\ref{thm:changeVar}, \eqref{def:cX}, and \eqref{def:cXdx} imply 
	\begin{equation*}
	\mathrm{meas}\big(\{r \! \mid \T(r) \leq t \} \big) \leq \frac{1}{2} \Big(1+\frac{1}{2}t^2 \Big)\bar{G}_{\infty} + \frac{1}{2}\bar{G}_{\Dx, \infty} =  \Big(1+\frac{1}{4}t^2 \Big)\bar{G}_{\infty}. \qedhere
	\end{equation*} 
\end{proof}

\begin{theorem}\label{thm:Aprior2}
	Suppose $(\bar{u}, \bar{\mu}, \bar{\nu}) \in \D^{\alpha}_0$ and $t\in [0,T]$. Let $\bar{\mathcal{X}} = (L((\bar{u}, \bar{\mu}, \bar{\nu})))\bullet \phi$ and $\bar{\mathcal{X}}_{\Dx} = (L \circ P_{\Dx}((\bar u, \bar \mu, \bar \nu)))\bullet \psi$, where $\phi$ and $\psi$ are given by Definition~\ref{def:maps}. Moreover, introduce $\mathcal{X}(t)= S_t (\bar{\mathcal{X}})$ and $\hat{\mathcal{X}}_{\Dx}(t)= S_t(\bar{\mathcal{X}}_{\Dx})$, then \vspace{-0.1cm}
\begin{align*}
		 \int_0^t & \|\mathcal{V}_r (s)-\hat{\mathcal{V}}_{\Dx,r}(s)\|_{L^1(\mathcal{B}^c)}ds \\	
& \qquad \leq \bigg(T \Big(4+T \|\alpha'\|_\infty  \Big(\|\bar u \|_\infty+\frac14T  \bar F_{\infty}\Big) \! \Big)\|\bar{\mathcal{H}}_r-\bar{\mathcal{H}}_{\Dx,r}\|_{L^1(\mathcal{B}^c)}\\ \nonumber
& \qquad \qquad  + T\| g(\bar{\mathcal{X}})- g(\bar{\mathcal{X}}_{\Dx})\|_{L^1 (\mathcal{B}^c)}\\ \nonumber
& \qquad \qquad + 4\Big(1+ T\|\alpha'\|_\infty \Big(\|\bar u\|_\infty + (1+ \sqrt{2})\bar F_\infty^{\nicefrac{1}{2}} \Dx^{\nicefrac{1}{2}}+ \frac14 T \bar F_\infty \Big)\! \Big) \\ \nonumber
& \qquad \qquad \qquad \times \! \Big(1+ \frac12 T \Big) \bar G_\infty^{\nicefrac{1}{2}}\|\bar{\mathcal{U}}_r-\bar{\mathcal{U}}_{\Dx,r}\|_{L^2(\mathcal{B}^c)} \\ \nonumber
& \qquad \qquad +  T^2\|\alpha'\|_\infty \bar F_\infty \|\bar u - \bar u_{\Dx}\|_\infty + 4T\|\alpha'\|_\infty \bar G_\infty \Dx \bigg) e^{\frac14 T^2 \|\alpha'\|_\infty  \bar G_\infty}.		 
 \end{align*}
\end{theorem}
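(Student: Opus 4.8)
The plan is to exploit the fact that, thanks to Lemma~\ref{lem:commutation} (equivalently \eqref{eq:identitiesCom2}), the rescaled coordinates $\mathcal{X}(t)$ and $\hat{\mathcal{X}}_{\Dx}(t)$ both solve the Lagrangian system \eqref{eq:LagrSystem} -- but now with wave breaking functions $\T$ and $\T_{\Dx}$ from \eqref{Tau}--\eqref{Taux} -- and that on $\mathcal{B}^c$ they coincide with the standard representation \eqref{rep:Xr}. So I would begin by writing out, using \eqref{eq:ODE3}, the explicit form of $\mathcal{V}_r(s,r)$ and $\hat{\mathcal{V}}_{\Dx,r}(s,r)$ for $r\in\mathcal{B}^c$: namely $\mathcal{V}_r(s,r) = \bar{\mathcal{V}}_r(r)$ if $s<\T(r)$ and $\mathcal{V}_r(s,r) = (1-\alpha(\mathcal{Y}(\T(r),r)))\bar{\mathcal{V}}_r(r)$ if $s\geq \T(r)$, and similarly for the numerical quantity with $\T_{\Dx}$ and $\hat{\mathcal{Y}}_{\Dx}$. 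Subtracting and inserting the continuous-in-time energy loss function $g$ from \eqref{eq:g}, I would split $|\mathcal{V}_r - \hat{\mathcal{V}}_{\Dx,r}|(s,r)$ into a term comparing the pre-breaking values (controlled by $|\bar{\mathcal{V}}_r - \bar{\mathcal{V}}_{\Dx,r}| \leq |\bar{\mathcal{H}}_r - \bar{\mathcal{H}}_{\Dx,r}|$ since $\bar{\mathcal{V}}_r \leq \bar{\mathcal{H}}_r$ and equality holds where $U_r<0$), a term comparing the post-breaking coefficients $(1-\alpha(\mathcal{Y}(\T(r),r)))$ versus $(1-\alpha(\hat{\mathcal{Y}}_{\Dx}(\T_{\Dx}(r),r)))$, and a term involving $g(\bar{\mathcal{X}})-g(\bar{\mathcal{X}}_{\Dx})$ directly on the set where exactly one of the two has broken by time $s$.

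The next step is to control the discrepancy in the dissipation coefficients. Here $\alpha\in W^{1,\infty}$ gives $|\alpha(\mathcal{Y}(\T(r),r)) - \alpha(\hat{\mathcal{Y}}_{\Dx}(\T_{\Dx}(r),r))| \leq \|\alpha'\|_\infty |\mathcal{Y}(\T(r),r) - \hat{\mathcal{Y}}_{\Dx}(\T_{\Dx}(r),r)|$, and I would estimate the position difference at (distinct) breaking times by integrating \eqref{eq:ODE1}--\eqref{eq:ODE2}: roughly $\mathcal{Y}(\T(r),r) = \bar{\mathcal{Y}}(r) + \int_0^{\T(r)} \mathcal{U}(s,r)\,ds$, and using $\|\mathcal{U}(s)\|_\infty \leq \|\bar u\|_\infty + \frac14 T\bar F_\infty$ (and the analogous bound with the $\Dx^{1/2}$ correction from Proposition~\ref{prop:ratesPdx}/\eqref{eq:u1} for the numerical side), $\bar{\mathcal{Y}}(r)=\hat{\mathcal{Y}}_{\Dx}(r)$ by \eqref{prop:YYD}, plus the bound $\|y - y_{\Dx}\|_\infty$-type estimate near $0$ which feeds back $\|\bar u - \bar u_{\Dx}\|_\infty$ and $\bar G_\infty\Dx$. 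Multiplying this coefficient error by $\bar{\mathcal{H}}_r$ or $\bar{\mathcal{H}}_{\Dx,r}$ and integrating over $\mathcal{B}^c$ is where the $T^2\|\alpha'\|_\infty \bar F_\infty\|\bar u-\bar u_{\Dx}\|_\infty$ and $4T\|\alpha'\|_\infty\bar G_\infty\Dx$ terms, and the $\|\bar{\mathcal{U}}_r-\bar{\mathcal{U}}_{\Dx,r}\|_{L^2(\mathcal{B}^c)}$ term (via Cauchy--Schwarz against $\bar{\mathcal{H}}_r^{1/2}$ and Proposition~\ref{prop:measureBound} to bound $\mathrm{meas}\{\T(r)\leq t\}\leq (1+\frac14 t^2)\bar G_\infty$, giving the factor $(1+\frac12 T)\bar G_\infty^{1/2}$), come from. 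The factor $\|\alpha'\|_\infty(\|\bar u\|_\infty + \frac14 T\bar F_\infty)$ multiplying $\|\bar{\mathcal{H}}_r-\bar{\mathcal{H}}_{\Dx,r}\|_{L^1(\mathcal{B}^c)}$ arises from the contribution of $\bar{\mathcal{H}}_r - \bar{\mathcal{H}}_{\Dx,r}$ entering the position estimate through $\bar{\mathcal{U}}_r$ and the $\T^2\bar{\mathcal{H}}_r$ term.

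After assembling all these pieces one obtains an inequality of the form
\begin{equation*}
\int_0^t \|\mathcal{V}_r(s) - \hat{\mathcal{V}}_{\Dx,r}(s)\|_{L^1(\mathcal{B}^c)}\,ds \leq (\text{all the explicit terms}) + \tfrac14 T^2\|\alpha'\|_\infty \bar G_\infty \int_0^t \|\mathcal{V}_r(s) - \hat{\mathcal{V}}_{\Dx,r}(s)\|_{L^1(\mathcal{B}^c)}\,ds,
\end{equation*}
where the self-referential term on the right is the feedback of $\mathcal{V}_r - \hat{\mathcal{V}}_{\Dx,r}$ on the interval $[0,s]$ into the position difference (since $\mathcal{Y}$ depends on the history of $\mathcal{V}$ through double time-integration, and $\alpha'$ converts the position difference into a coefficient error weighted by the energy). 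A Grönwall argument then produces the factor $e^{\frac14 T^2\|\alpha'\|_\infty \bar G_\infty}$ and finishes the proof. I expect the main obstacle to be the careful bookkeeping of the position difference $\mathcal{Y}(\T(r),r) - \hat{\mathcal{Y}}_{\Dx}(\T_{\Dx}(r),r)$ at the possibly different breaking times $\T(r)\neq\T_{\Dx}(r)$: one must track how this splits into a contribution from the initial data difference (handled by $\|\bar{\mathcal{U}}_r-\bar{\mathcal{U}}_{\Dx,r}\|_{L^2}$, $\|\bar u - \bar u_{\Dx}\|_\infty$ and the $\Dx$ terms), a contribution from the accumulated energy difference up to time $s$ (the Grönwall term), and the difference in the left asymptotes $\mathcal{U}_{-\infty}$, $\mathcal{Y}_{-\infty}$ — and to control all of this uniformly in $r$ while only having $L^1$/$L^2$ control of the differentiated quantities on $\mathcal{B}^c$. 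Getting the constants to match exactly as stated, especially the $(1+\sqrt 2)\bar F_\infty^{1/2}\Dx^{1/2}$ inside one bracket but not the other, will require invoking \eqref{eq:u1} for $\|\bar u - \bar u_{\Dx}\|_\infty$ only where the numerical position at a breaking time is being bounded from above in absolute value.
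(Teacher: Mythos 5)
Your plan follows essentially the same route as the paper's proof: split $\mathcal{B}^c$ according to which of the exact and numerical solutions has broken (and when), bound the pre-breaking part by $\|\bar{\mathcal{H}}_r-\bar{\mathcal{H}}_{\Dx,r}\|_{L^1(\mathcal{B}^c)}$ and the mixed cases via $g$, convert the dissipation-coefficient discrepancy through $\|\alpha'\|_\infty$ times the position difference at the (distinct) breaking times — which feeds back the double time integral of $\mathcal{V}_r-\hat{\mathcal{V}}_{\Dx,r}$, the $\|\bar u-\bar u_{\Dx}\|_\infty$ and $\bar G_\infty\Dx$ terms, and the $L^2$ term via the measure bound of Proposition~\ref{prop:measureBound} with Cauchy--Schwarz — and close with Gr\"onwall to produce $e^{\frac14T^2\|\alpha'\|_\infty\bar G_\infty}$. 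The only detail glossed over (the use of Corollary~\ref{cor:Aprior1}, i.e.\ the cancellation on $\mathcal{B}$ from Lemma~\ref{lem:coincidingLengths}, to reduce the symmetric integral in the position estimate to its $L^1(\mathcal{B}^c)$ part plus the local $\bar\nu_{\mathrm{sing}}$ term that sums to $4\bar G_\infty\Dx$) is consistent with what you describe and does not change the argument.
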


\begin{proof}
Recall \eqref{def:Omegamn}, which together with Fubini's theorem, allows us to write 
\begin{subequations}
	\begin{align}
	 \int_0^t\int_{\mathcal{B}^c}\! \! \bigl|\mathcal{V}_r - \hat{\mathcal{V}}_{\Dx,r} \bigr|(s,r)drds &=  \int_{\bar \Omega_{c, c}}\int_0^t   \!\bigl|\mathcal{V}_r - \hat{\mathcal{V}}_{\Dx,r} \bigr|(s,r)dsdr  \nonumber 
	 \\  & \quad +  \int_{\bar \Omega_{d, c}}\int_0^t  \! \bigl|\mathcal{V}_r - \hat{\mathcal{V}}_{\Dx,r} \bigr|(s,r)dsdr \label{eq:Vxi2}
	 \\ & \quad + \int_{\bar \Omega_{c, d}}\int_0^t \! \bigl|\mathcal{V}_r - \hat{\mathcal{V}}_{\Dx,r} \bigr|(s,r)dsdr  \label{eq:Vxi3}
	 \\ & \quad +  \int_{\bar \Omega_{d, d}}\int_0^t  \!\bigl|\mathcal{V}_r - \hat{\mathcal{V}}_{\Dx,r} \bigr|(s,r)dsdr.  \label{eq:Vxi4}
	 \end{align}
	 \end{subequations}
	 
No wave breaking takes place in the future for $r\in \bar \Omega_{c,c}$, and since $(\bar u,\bar \mu, \bar\nu)\in \D_0^{\alpha}$, we have, by \eqref{eq:LagrSystem},
 \begin{equation}\label{eq:none}
	 	\int_0^t \int_{\bar \Omega_{c,c}} \!\!\bigl|\mathcal{V}_r - \hat{\mathcal{V}}_{\Dx,r} \bigr|(s,r)ds dr = t \|\bar{\mathcal{H}}_r- \bar{\mathcal{H}}_{\Dx,r}\|_{L^1(\bar \Omega_{c,c})}.  
\end{equation}

Next, let us have a closer look at \eqref{eq:Vxi2}. Wave breaking will occur for any $r \in \Omega_{d}(\bar{\mathcal{X}})$ at some later time $t^{\star}$ and $r\in \Omega_c(\mathcal{X}(s))$ for $s\geq t^\star$, but $t^{\star}$ might not lie inside $[0, t]$. According to \eqref{eq:LagrSystem}, \eqref{eq:identitiesCom2}, and \eqref{Tau}, we thus have 
\begin{align*}
		\int_{\bar \Omega_{d, c}}\int_0^t  |&\mathcal{V}_r - \hat{\mathcal{V}}_{\Dx,r}|(s,r)dsdr  
		\\ &= \int_{\bar \Omega_{d, c} }\int_0^{\min(t, \T(r))}  |\bar{\mathcal{H}}_r - \bar{\mathcal{H}}_{\Dx,r}|(r)dsdr 
		\\ & \qquad \hspace{0.1cm} +  \int_{\bar \Omega_{d, c}}\int_{\min(t, \T(r))}^t   |\mathcal{V}_r (s,r)- \bar{\mathcal{V}}_{\Dx,r}(r)| dsdr
		\\ & \leq  t\|\bar{\mathcal{H}}_r- \bar{\mathcal{H}}_{\Dx,r}\|_{L^1(\bar\Omega_{d,c})}
		\\ & \qquad \hspace{0.1cm}+ \int_{\bar \Omega_{d, c}} \int_{\min(t,\T(r))}^t  \! \! |\!\left(1-\alpha(\mathcal{Y}(\T(r),r))\right)\bar{\mathcal{V}}_r(r) - \bar{\mathcal{V}}_{\Dx,r}(r)| dsdr. 
	\end{align*}
As $\alpha\in W^{1, \infty}(\R, [0,1))$, we obtain 
\begin{align}\nonumber
	 \vert \alpha\left(\mathcal{Y}(\T(r),r)\right)-\alpha(\bar{\mathcal{Y}}(r))\vert\bar{\mathcal{V}}_r(r) &= \bigg \vert  \int_0^{\T(r)}\frac{d}{ds}\big(\alpha(\mathcal{Y}(s,r)) \big)ds \bigg\vert \bar{\mathcal{V}}_r(r) \nonumber
	\\ &\leq  \int_0^{\T(r)} \big\vert \alpha'(\mathcal{Y}(s, r))\mathcal{U}(s, r)\big\vert ds \bar{\mathcal{V}}_r(r) \nonumber\\ \nonumber
	& \leq 2\|\alpha'\|_\infty \max_{s\in [0,T]}\|\mathcal{U}(s)\|_\infty  \frac12\T(r) \bar{\mathcal{V}}_r(r)\\ \nonumber
	& \leq  2\|\alpha'\|_\infty \max_{s\in [0,T]}\|\mathcal{U}(s)\|_\infty \big(\mathcal{U}_r(\T(r),r)-\bar{\mathcal{U}}_r(r) \big)\\ \label{eq:estalpha}
	& \leq 2\|\alpha'\|_\infty \max_{s\in [0,T]}\|\mathcal{U}(s)\|_\infty \big(\bar{\mathcal{U}}_{\Dx,r}(r)-\bar{\mathcal{U}}_r(r) \big),
\end{align}
where we, in the last step, used that $\bar {\mathcal{U}}_r(r)<\mathcal{U}_r(\T(r),r)=0\leq \bar{\mathcal{U}}_{\Dx,r}(r)$. Furthermore, one has, for all $(b,r)\in [0,T]\times \R$, 
\begin{equation*}
\vert \mathcal{U}(b,r)\vert \leq \|\bar{\mathcal{U}}\|_\infty + \frac14 T \bar{\mathcal{V}}_\infty = \|\bar u\|_\infty + \frac14 T \bar F_\infty.
\end{equation*}
Recalling \eqref{eq:g}, we therefore end up with 
\begin{align} \nonumber
 \int_{\bar \Omega_{d, c}} \int_0^t |&\mathcal{V}_r - \hat{\mathcal{V}}_{\Dx,r}|(s,r)dsdr\\ \nonumber
& \leq    t\| \bar{\mathcal{H}}_r-\bar{\mathcal{H}}_{\Dx,r}\|_{L^1(\bar \Omega_{d,c})} \\ \nonumber
&  \quad \hspace{0.1cm}+\int_{\bar \Omega_{d, c}}\int_{\min(t,\T(r))}^t  \! \! |\!\left(1-\alpha(\bar{\mathcal{Y}}(r))\right)\!\bar{\mathcal{V}}_r(r) - \bar{\mathcal{V}}_{\Dx,r}(r)| dsdr\\ \nonumber
&  \quad \hspace{0.1cm}+  \int_{\bar \Omega_{d, c}}\int_{\min(t,\T(r))}^t  \left|\alpha(\mathcal{Y}(\T(r),r))-\alpha(\bar{\mathcal{Y}}(r))\right\vert \bar{\mathcal{V}}_r(r)dsdr\\ \nonumber
& \leq t \|\bar{\mathcal{H}}_r- \bar{\mathcal{H}}_{\Dx,r}\|_{L^1(\bar \Omega_{d,c})}+ t \|g(\bar{\mathcal{X}})- g(\bar {\mathcal{X}}_{\Dx})\|_{L^1(\bar\Omega_{d,c})}\\\label{eq:onlyOne}
& \quad \hspace{0.1cm} + 2t \|\alpha'\|_\infty \Big(\|\bar u\|_\infty + \frac14 T\bar F_\infty \Big)\|\bar{\mathcal{U}}_r-\bar{\mathcal{U}}_{\Dx,r}\|_{L^1(\bar \Omega_{d,c}\cap \Omega_c(\mathcal{X}(t)))}.
\end{align}

For \eqref{eq:Vxi3} we can follow the same lines. Since \eqref{eq:LagrSystem} and \eqref{eq:u1} imply for all $(b,r)\in [0,T]\times \R$, 
\begin{equation*}
\vert \mathcal{U}_{\Dx}(b,r)\vert \leq \|\bar u_{\Dx}\|_\infty + \frac14 T \bar F_{\Dx,\infty} \leq \|\bar u\|_\infty +  (1+\sqrt{2})\bar{F}_{\mathrm{ac}, \infty}^{\nicefrac{1}{2}}\Dx^{\nicefrac{1}{2}} + \frac{1}{4}T\bar{F}_{\infty},
\end{equation*}
we end up with 
\begin{align} \label{eq:onlyTwo}
 \int_{\bar \Omega_{c, d}}& \int_0^t |\mathcal{V}_r - \hat{\mathcal{V}}_{\Dx,r}|(s,r)dsdr \nonumber \\ \nonumber
& \leq t \|\bar{\mathcal{H}}_r- \bar{\mathcal{H}}_{\Dx,r}\|_{L^1(\bar \Omega_{c,d})}+ t \|g(\bar{\mathcal{X}})- g(\bar {\mathcal{X}}_{\Dx})\|_{L^1(\bar \Omega_{c,d})} \\ 
& \quad  + 2t \|\alpha'\|_\infty \Big(\|\bar u\|_\infty +(1+\sqrt{2})\bar{F}_{\mathrm{ac}, \infty}^{\nicefrac{1}{2}}\Dx^{\nicefrac{1}{2}}+ \frac14 T\bar F_\infty \Big) \nonumber
\\ & \qquad \qquad \qquad \quad \times \|\bar{\mathcal{U}}_r-\bar{\mathcal{U}}_{\Dx,r}\|_{L^1(\bar \Omega_{c,d}\cap \Omega_c(\hat{\mathcal{X}}_{\Dx}(t)))}.
\end{align}

It remains to consider \eqref{eq:Vxi4}, which can be further decomposed as follows
\begin{subequations}\label{eq:bothLater}
\begin{align} \nonumber
	\int_{\bar{\Omega}_{d, d}} \int_0^t  |\mathcal{V}_r-&  \hat{\mathcal{V}}_{\Dx, r}|(s,r)ds dr \\
	&=  \int_{\bar{\Omega}_{d, d}} \int_0^{\min(t,\T(r), \T_{\Dx}(r))} \! \big|\bar{\mathcal{H}}_r -  \bar{\mathcal{H}}_{\Dx,r} \big|(r)ds dr \label{eq:bothLater20}
	\\ & \quad + \int_{\bar{\Omega}_{d, d}}\int_{\min(t,\T(r), \T_{\Dx}(r))}^{\min(t, \max(\T(r), \T_{\Dx}(r)))} \! |\mathcal{V}_r- \hat{\mathcal{V}}_{\Dx, r}|(s,r)dsdr \label{eq:bothLater21}\\
	& \quad + \int_{\bar{\Omega}_{d, d}}\int_{\min(t, \max(\T(r), \T_{\Dx}(r)))}^t \! |\mathcal{V}_r- \hat{\mathcal{V}}_{\Dx, r}|(s,r)dsdr \label{eq:bothLater22}.
\end{align}
\end{subequations}
Here we need to have a closer look at the terms \eqref{eq:bothLater21} and \eqref{eq:bothLater22}. To this end, assume, without loss of generality, that $0<\T(r)<\T_{\Dx}(r)<t$. The remaining possibilities can be treated, up to slight modifications, using the same idea. 

To estimate the time integral of \eqref{eq:bothLater21}, note that $\mathcal{U}_r(\T(r), r) = 0 = \mathcal{U}_{\Dx, r}(\T_{\Dx}(r), r)$, and it therefore follows from \eqref{eq:LagrSystem} that 
\begin{align} \nonumber
(\T_{\Dx}(r)-\T(r)) \bar{\mathcal{V}}_{\Dx,r}(r) &= 2(\hat{\mathcal{U}}_{\Dx,r}(\T_{\Dx}(r),r)- \hat{\mathcal{U}}_{\Dx,r}(\T(r),r))\\ \nonumber
& = 2 (\mathcal{U}_r- \hat{\mathcal{U}}_{\Dx,r})(\T(r), r)\\ \label{rel:TUV}
& = 2(\bar{\mathcal{U}}_r- \bar{\mathcal{U}}_{\Dx,r})(r) + \T(r)(\bar{\mathcal{V}}_r- \bar{\mathcal{V}}_{\Dx,r})(r), 
\end{align}
and hence 
\begin{align}\label{eq:bothLater21Est}
\int_{\T(r)}^{\T_{\Dx}(r)} |\mathcal{V}_r- \hat{\mathcal{V}}_{\Dx, r}|(s,r)ds& = (\T_{\Dx}(r)-\T(r)) (1- \alpha(\mathcal{Y}(\T(r), r)))\vert \bar{\mathcal{V}}_r- \bar{\mathcal{V}}_{\Dx,r}\vert (r) \nonumber \\
& \qquad + (\T_{\Dx}(r)-\T(r)) \alpha(\mathcal{Y}(\T(r),r)) \bar{\mathcal{V}}_{\Dx,r}(r) \nonumber \\
& \leq 2 \big\vert \bar{\mathcal{U}}_r- \bar{\mathcal{U}}_{\Dx,r} \big\vert (r)+ 2t\big \vert \bar{\mathcal{V}}_r- \bar{\mathcal{V}}_{\Dx,r} \big\vert(r). 
\end{align}

For \eqref{eq:bothLater22}, we again start by first considering the time integral. Observe that for $r\in [\xi_{3j}, \xi_{3j+3})$, using \eqref{eq:LagrSystem}, \eqref{prop:YYD},  \eqref{eq:identitiesCom2}, and Corollary~\ref{cor:Aprior1}, yields
\begin{align*}
\vert \mathcal{Y}(\T(r),&\hspace{0.0025cm} r)  - \hat {\mathcal{Y}}(\T_{\Dx}(r), r)\vert 
\\ & = \bigg\vert \int_0^{\T(r)} \mathcal{U}(s,r)ds - \int_0^{\T_{\Dx}(r)} \hat{\mathcal{U}}_{\Dx}(s,r)ds \bigg\vert \\
& \leq \big\vert \T(r)\bar{\mathcal{U}}(r)- \T_{\Dx}(r)\bar{\mathcal{U}}_{\Dx}(r) \big\vert \\
& \quad + \bigg \vert \int_0^{\T(r)}\! \int_0^s \mathcal{U}_t(l,r) dlds- \int_0^{\T_{\Dx}(r)}  \!\int_0^s \hat{\mathcal{U}}_{\Dx, t}(l, r) dl ds \bigg\vert \\
& \leq \big(\T_{\Dx}(r) - \T(r) \big) \big\vert \bar{\mathcal{U}}(r) \big\vert +  \T_{\Dx}(r) \big\vert \bar{\mathcal{U}}(r)- \bar{\mathcal{U}}_{\Dx}(r) \big\vert 
\\ & \quad + \bigg \vert \int_{\T(r)}^{\T_{\Dx}(r)} \int_0^s \hat{\mathcal{U}}_{\Dx, t}(l, r)dl ds \bigg \vert \\ 
& \quad +\bigg \vert \int_0^{\T(r)}\int_0^s \big(\mathcal{U}_t - \hat{\mathcal{U}}_{\Dx, t}\big)(l, r) dl ds \bigg \vert \\ 
& \leq (\T_{\Dx}(r)- \T(r)) \|\bar u \|_\infty + t\|\bar u- \bar u_{\Dx}\|_\infty\\
& \quad + \frac18 (\T_{\Dx}^2(r)- \T^2(r))\bar{\mathcal{V}}_\infty  \\
& \quad + \frac14 \bigg\vert \! \int_0^{\T(r)} \! \! \! \int_0^s \! \left(\int_{-\infty}^r \! (\mathcal{V}_r- \hat{\mathcal{V}}_{\Dx,r})(l, \eta) d\eta - \! \int_{r}^\infty \!(\mathcal{V}_r- \hat{\mathcal{V}}_{\Dx,r})(l, \eta) d\eta \right) \! dl ds \bigg\vert\\
& \leq (\T_{\Dx}(r)- \T(r)) \|\bar u \|_\infty + t\|\bar u- \bar u_{\Dx}\|_\infty\\
& \quad  + \frac  14 t(\T_{\Dx}(r)- \T(r))\bar F_{\infty} + \frac14  \int_0^{t}\int_0^s \| \mathcal{V}_r(l)- \hat{\mathcal{V}}_{\Dx,r}(l)\|_{L^1(\mathcal{B}^c)}dlds\\
& \quad + \frac14 \T(r)^2 \bar \nu_{\mathrm{sing}}\big([x_{2j}, x_{2j+2})\big).
\end{align*}
Thus,  for $r\in [\xi_{3j}, \xi_{3j+3})$,  we have
\begin{align*}
\int_{\T_{\Dx}(r)}^t \!  |\mathcal{V}_r &- \hat{\mathcal{V}}_{\Dx, r}|(s,r)ds
\\ & \leq \big(t- \T_{\Dx}(r) \big) \big(1- \alpha(\mathcal{Y}(\T(r), r)) \big)\vert \bar{\mathcal{V}}_r-\bar{\mathcal{V}}_{\Dx,r}\vert(r)\\
& \quad + \big(t-\T_{\Dx}(r) \big) \big\vert \alpha(\mathcal{Y}(\T(r),r))- \alpha(\hat{\mathcal{Y}}_{\Dx}(\T_{\Dx}(r),r)) \big\vert \bar{\mathcal{V}}_{\Dx,r}(r)\\
& \leq t \vert \bar{\mathcal{V}}_r-\bar{\mathcal{V}}_{\Dx,r}\vert(r)\\
& \quad + t\|\alpha'\|_\infty \big\vert \mathcal{Y}(\T(r),r)-\hat{ \mathcal{Y}}_{\Dx}(\T_\Dx(r),r) \big\vert \bar{\mathcal{V}}_{\Dx,r}(r)\\
& \leq t \vert \bar{\mathcal{V}}_r-\bar{\mathcal{V}}_{\Dx,r}\vert(r) + t^2\|\alpha'\|_\infty \|\bar u- \bar u_{\Dx}\|_\infty\bar{\mathcal{V}}_{\Dx,r}(r) \\
& \quad +t \|\alpha'\|_\infty \Big(\|\bar u \|_\infty+\frac14t  \bar F_{\infty} \Big)(\T_{\Dx}(r)- \T(r))\bar{\mathcal{V}}_{\Dx,r}(r)\\
& \quad +  \frac14 t \|\alpha'\|_\infty \T(r)^2 \bar \nu_{\mathrm{sing}}\big([x_{2j}, x_{2j+2})\big)\bar{\mathcal{V}}_{\Dx,r}(r)
\\ & \quad + \frac14t \|\alpha'\|_\infty  \int_0^{t}\int_0^s \| \mathcal{V}_r(l)- \hat{\mathcal{V}}_{\Dx,r}(l)\|_{L^1(\mathcal{B}^c)}dl ds \bar{\mathcal{V}}_{\Dx,r}(r). 
\end{align*}
Furthermore, $\T(r)<\T_{\Dx}(r)$, which implies, by following the same argument as that leading to \eqref{rel:TbY},
\begin{equation*}
\frac14 \T^2(r)\bar{\mathcal{V}}_{\Dx,r}(r)\leq \frac14 \T_{\Dx,r}^2(r)\bar{\mathcal{V}}_{\Dx,r}(r)=\bar{\mathcal{Y}}_{\Dx,r}(r),
\end{equation*}
which in turn, when combined with \eqref{rel:TUV}, for $r\in [\xi_{3j}, \xi_{3j+3})$, gives
\begin{align}\label{eq:bothLater22Est}
\int_{\T_{\Dx}(r)}^t \! \big|\mathcal{V}_r &- \hat{\mathcal{V}}_{\Dx, r} \big|(s,r)ds \nonumber 
\\ & \leq t\Big(1+t\|\alpha'\|_\infty \big(\|\bar u \|_\infty+\frac14t  \bar F_{\infty} \big) \Big) \big\vert \bar{\mathcal{V}}_r-\bar{\mathcal{V}}_{\Dx,r} \big\vert(r) \nonumber \\
& \quad + 2 t \|\alpha'\|_\infty \Big(\|\bar u\|_\infty + \frac14 t\bar F_\infty \Big) \big\vert\bar{\mathcal{U}}_r- \bar{\mathcal{U}}_{\Dx,r} \big\vert (r) \nonumber \\
& \quad + t\|\alpha'\|_\infty\Big( t\|\bar u- \bar u_{\Dx}\|_\infty \bar{\mathcal{V}}_{\Dx,r}(r) +  \bar\nu_{\mathrm{sing}} \big([x_{2j}, x_{2j+2}) \big) \bar{\mathcal{Y}}_{\Dx,r}(r) \Big) \nonumber
\\ 
& \quad  + \frac14 t \|\alpha'\|_{\infty} \int_0^{t} \int_0^s \| \mathcal{V}_r(l)- \hat{\mathcal{V}}_{\Dx,r}(l)\|_{L^1(\mathcal{B}^c)}dl ds\bar{\mathcal{V}}_{\Dx,r}(r).
\end{align}
By combining \eqref{eq:bothLater20}, \eqref{eq:bothLater21Est}, \eqref{eq:bothLater22Est}, and similar estimates for the remaining cases,  we therefore end up with 
\begin{align}\label{eq:both}
\int_{\bar{\Omega}_{d,d}}\int_0^t \! \big\vert \mathcal{V}_r&-\hat{\mathcal{V}}_{\Dx,r} \big\vert (s,r) ds dr \nonumber 
\\& \leq t \Big(4+t\|\alpha'\|_\infty \big(\|\bar u \|_\infty+\frac14t  \bar F_{\infty} \big)\Big) \|\bar{\mathcal{H}}_r-\bar{\mathcal{H}}_{\Dx,r}\|_{L^1(\bar{\Omega}_{d,d})} \nonumber\\ \nonumber
& \quad + 2\Big(1+ t\|\alpha'\|_\infty \big(\|\bar u\|_\infty + \frac14 t \bar F_\infty\big)\Big)\\ \nonumber
& \qquad \qquad \times \|\bar{\mathcal{U}}_r-\bar{\mathcal{U}}_{\Dx,r}\|_{L^1(\bar{\Omega}_{d,d}\cap (\Omega_c(\mathcal{X}(t))\cup\Omega_c(\mathcal{X}_{\Dx}(t))))} \\ \nonumber
& \quad + t^2\|\alpha'\|_\infty \bar F_\infty \|\bar u - \bar u_{\Dx}\|_\infty + 4 t\|\alpha'\|_\infty \bar G_\infty \Dx \\ 
& \quad + \frac14 t \|\alpha'\|_\infty  \bar G_\infty \int_0^{t} \int_0^s\| \mathcal{V}_r(l)- \hat{\mathcal{V}}_{\Dx,r}(l)\|_{L^1(\mathcal{B}^c)}dlds,
\end{align}
where we used the following inequality, which holds as $\bar{\mathcal{Y}}(\xi_{3j}) = x_{2j} = \bar{\mathcal{Y}}_{\Dx}(\xi_{3j})$ for $j \in \mathbb{Z}$, cf. \eqref{eq:coincideEnd}--\eqref{eq:coincideGrid}, \eqref{eq:Y}, and \eqref{prop:YYD}, 
\begin{align*}
\sum_{j\in \mathbb{Z}}\int_{\bar{\Omega}_{d,d}\cap [\xi_{3j}, \xi_{3j+3})}  \! \! &\bar\nu_{\mathrm{sing}}\big([x_{2j}, x_{2j+2})\big)\big(\bar{\mathcal{Y}}_{r}+ \bar{\mathcal{Y}}_{\Dx,r} \big)(r) dr \\
&\leq \sum_{j\in \mathbb{Z}} \bar\nu_{\mathrm{sing}}\big([x_{2j}, x_{2j+2})\big) \int_{\xi_{3j}}^{\xi_{3j+3}}(\bar{\mathcal{Y}}_{r}+ \bar{\mathcal{Y}}_{\Dx,r})(r) dr \\
& =\sum_{j\in \mathbb{Z}} \bar\nu_{\mathrm{sing}} \big([x_{2j}, x_{2j+2}) \big) 2(x_{2j+2}-x_{2j})\leq 4\bar G_\infty \Dx .
\end{align*}
Moreover, by the Cauchy--Schwarz inequality and Proposition~\ref{prop:measureBound}, we have 
\begin{align}\label{eq:L2U}
	 \|\bar{\mathcal{U}}_r-\bar{\mathcal{U}}_{\Dx,r}&\|_{L^1(\{r \mid 0 <\T(r)\leq t \text{ or } 0 <\T_{\Dx}(r)\leq t\})} \nonumber
	\\ & \leq \sqrt{2\big(1 + \frac{1}{4}t^2 \big)}\bar{G}_{\infty}^{\nicefrac{1}{2}}\|\bar{\mathcal{U}}_r-\bar{\mathcal{U}}_{\Dx,r}\|_{L^2(\mathcal{B}^c)}, 
\end{align}
hence, when finally combining \eqref{eq:none}, \eqref{eq:onlyOne}, \eqref{eq:onlyTwo},  \eqref{eq:both}, and \eqref{eq:L2U}, we obtain 
\begin{align} \nonumber
\int_0^t & \|\mathcal{V}_r(s)- \hat{\mathcal{V}}_{\Dx,r}(s)\|_{L^1(\mathcal{B}^c)} ds \\ \nonumber
& \qquad \leq T\Big(4+ T\|\alpha'\|_\infty  \Big(\|\bar u \|_\infty+\frac14T  \bar F_{\infty} \Big) \! \Big) \|\bar{\mathcal{H}}_r-\bar{\mathcal{H}}_{\Dx,r}\|_{L^1(\mathcal{B}^c)}\\ \nonumber
& \qquad \quad  + T\| g(\bar{\mathcal{X}})- g(\bar{\mathcal{X}}_{\Dx})\|_{L^1 (\mathcal{B}^c)}\\ \nonumber
& \qquad \quad + 4 \Big(1+ T\|\alpha'\|_\infty \Big(\|\bar u\|_\infty + (1+ \sqrt{2})\bar F_\infty^{\nicefrac{1}{2}} \Dx^{\nicefrac{1}{2}}+ \frac14 T \bar F_\infty\Big) \! \Big)\\ \nonumber
& \qquad \qquad \quad \times \! \Big(1+\frac{1}{2}T\Big)\bar{G}_\infty^{\nicefrac{1}{2}}\|\bar{\mathcal{U}}_r-\bar{\mathcal{U}}_{\Dx,r}\|_{L^2(\mathcal{B}^c)} \\ \nonumber
& \qquad \quad + T^2\|\alpha'\|_\infty \bar F_\infty \|\bar u - \bar u_{\Dx}\|_\infty+ 4 T\|\alpha'\|_\infty\bar G_\infty \Dx \\ \nonumber
& \qquad \quad + \frac14 T \|\alpha'\|_\infty  \bar G_\infty \! \int_0^{t} \int_0^s\| \mathcal{V}_r(l)-\hat{ \mathcal{V}}_{\Dx,r}(l)\|_{L^1(\mathcal{B}^c)}dlds.
\end{align}
Applying Gronwall's inequality proves the statement. 
\end{proof}

\subsection{Convergence rate in Lagrangian and Eulerian coordinates} At last, we can obtain the 
 convergence rates for \eqref{convrate:rest}, which subsequently enable us to deduce a  convergence rate in Eulerian coordinates. 

\begin{theorem}\label{thm:ConvRateLagr}
Suppose $(\bar u, \bar \mu, \bar \nu)\in \D_0^\alpha$ with $\bar u_x\in B_2^\beta$ for some $\beta\in (0,1]$. Let $\bar{X} = L((\bar{u}, \bar{\mu}, \bar{\nu}))$ and  $\bar{X}_{\Dx} = L \circ P_{\Dx}((\bar u, \bar \mu, \bar \nu))$. Moreover, introduce $X(t) = S_t (\bar{X})$ and $\hat X_{\Dx}(t) = S_t(\bar{X}_{\Dx})$, then there exists a constant $\tilde{D}$, dependent on $T$, $\|\alpha '\|_\infty$, $\|\bar u\|_\infty$,  $\bar G_\infty$, $\beta$, and $|\bar u_x|_{2,\beta}$,  such that 
\begin{equation*}
\sup_{t\in [0,T]} \|y(t) - \hat y_{\Dx}(t)\|_{\infty}+\sup_{t\in [0,T]} \|U(t) - \hat U_{\Dx}(t)\|_{\infty}\leq \tilde{D}(\Dx^{\nicefrac{1}{2}}+ \Dx^{\nicefrac{\beta}{2}}).
\end{equation*}
\end{theorem}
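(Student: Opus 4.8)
The plan is to chain together the estimates gathered in Sections~\ref{sec:InitialRatesEuler}--\ref{sec:AdditionalChange}. As observed immediately before the statement, it suffices to bound $\sup_{t\in[0,T]}\|U(t)-\hat U_{\Dx}(t)\|_\infty$: integrating \eqref{eq:ODE1} yields $\|y(t)-\hat y_{\Dx}(t)\|_\infty\le\|\bar y-\bar y_{\Dx}\|_\infty+T\sup_{s\in[0,T]}\|U(s)-\hat U_{\Dx}(s)\|_\infty$, and $\|\bar y-\bar y_{\Dx}\|_\infty\le 2\Dx$ by Lemma~\ref{lem:initialRatesLagr}. I would then start from \eqref{tow:conv1}, take the supremum over $\xi\in\R$ (legitimate, since the right-hand side of Theorem~\ref{thm:Aprior1} is independent of $\xi$), and insert Theorem~\ref{thm:Aprior1} to get, for all $t\in[0,T]$,
\begin{align*}
\|U(t)-\hat U_{\Dx}(t)\|_\infty &\le\|\bar U-\bar U_{\Dx}\|_\infty+t\Dx+t\,\|\delta_{2\Dx}\bar u_x^2\|_1 \\
&\quad +\tfrac14\int_0^t\|\mathcal V_r(s)-\hat{\mathcal V}_{\Dx,r}(s)\|_{L^1(\mathcal B^c)}\,ds.
\end{align*}

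Next I would treat the three remaining contributions. The first is $\|\bar U-\bar U_{\Dx}\|_\infty\le(1+2\sqrt2)\bar F_{\mathrm{ac},\infty}^{\nicefrac{1}{2}}\Dx^{\nicefrac{1}{2}}$ by Lemma~\ref{lem:initialRatesLagr}. For $\|\delta_{2\Dx}\bar u_x^2\|_1$, which is not covered directly by an earlier estimate, I would use the factorisation $\bar u_x^2(x+2\Dx)-\bar u_x^2(x)=\bigl(\bar u_x(x+2\Dx)+\bar u_x(x)\bigr)\delta_{2\Dx}\bar u_x(x)$, then Cauchy--Schwarz and the triangle inequality, and finally the definition \eqref{eq:semiNorm} of $|\cdot|_{2,\beta}$ (applicable because $2\Dx\in(0,2]$ since $\Dx\le 1$); together with $\|\bar u_x\|_2^2=\bar\mu_{\mathrm{ac}}(\R)=\bar F_{\mathrm{ac},\infty}$ this gives $\|\delta_{2\Dx}\bar u_x^2\|_1\le 2^{\beta+1}\bar F_{\mathrm{ac},\infty}^{\nicefrac{1}{2}}|\bar u_x|_{2,\beta}\Dx^{\beta}$. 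The integral term is precisely the quantity estimated in Theorem~\ref{thm:Aprior2}.

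Applying Theorem~\ref{thm:Aprior2} reduces the integral to a linear combination of $\|\bar{\mathcal H}_r-\bar{\mathcal H}_{\Dx,r}\|_{L^1(\mathcal B^c)}$, $\|g(\bar{\mathcal X})-g(\bar{\mathcal X}_{\Dx})\|_{L^1(\mathcal B^c)}$, $\|\bar{\mathcal U}_r-\bar{\mathcal U}_{\Dx,r}\|_{L^2(\mathcal B^c)}$, $\|\bar u-\bar u_{\Dx}\|_\infty$ and $\Dx$, with coefficients depending only on $T$, $\|\alpha'\|_\infty$, $\|\bar u\|_\infty$, $\bar F_\infty$, $\bar G_\infty$ (the $\Dx^{\nicefrac{1}{2}}$ inside one of those coefficients being bounded by $1$). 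By Lemma~\ref{lem:boundHrUrgr} the first three terms are each at most a constant times $\|\bar u_x-\bar u_{\Dx,x}\|_2$, which Lemma~\ref{lem:rateux} bounds by $\tilde C\Dx^{\nicefrac{\beta}{2}}$, while Proposition~\ref{prop:ratesPdx} gives $\|\bar u-\bar u_{\Dx}\|_\infty\le(1+\sqrt2)\bar F_{\mathrm{ac},\infty}^{\nicefrac{1}{2}}\Dx^{\nicefrac{1}{2}}$. Since $(\bar u,\bar\mu,\bar\nu)\in\D_0^\alpha$ we have $\bar F_{\mathrm{ac},\infty}\le\bar F_\infty=\bar G_\infty$, so every surviving constant depends only on $T$, $\|\alpha'\|_\infty$, $\|\bar u\|_\infty$, $\bar G_\infty$, $\beta$ and $|\bar u_x|_{2,\beta}$. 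Each term is then of the form $(\textrm{const})\cdot\Dx^{p}$ with $p\in\{1,\beta,\nicefrac{1}{2},\nicefrac{\beta}{2}\}$; since $\Dx\le1$ one has $\Dx\le\Dx^{\nicefrac{1}{2}}$ and $\Dx^{\beta}\le\Dx^{\nicefrac{\beta}{2}}$, so $\sup_{t\in[0,T]}\|U(t)-\hat U_{\Dx}(t)\|_\infty\le C\bigl(\Dx^{\nicefrac{1}{2}}+\Dx^{\nicefrac{\beta}{2}}\bigr)$. Feeding this back into the bound for $\|y(t)-\hat y_{\Dx}(t)\|_\infty$ from the first paragraph completes the proof with, e.g., $\tilde D=2+(1+T)C$.

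I do not expect a deep obstacle: the substance is already contained in Theorems~\ref{thm:Aprior1} and~\ref{thm:Aprior2}, Lemma~\ref{lem:boundHrUrgr} and Lemma~\ref{lem:rateux}. The only genuinely new ingredient is the estimate of $\|\delta_{2\Dx}\bar u_x^2\|_1$ via the factorisation above, and the rest is careful bookkeeping — tracking which constants survive so that $\tilde D$ depends only on the listed quantities, and repeatedly using $\Dx\le 1$ to absorb the subleading powers $\Dx$ and $\Dx^{\beta}$ into $\Dx^{\nicefrac{1}{2}}$ and $\Dx^{\nicefrac{\beta}{2}}$.
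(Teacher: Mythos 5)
Your proposal is correct and takes essentially the same route as the paper: it chains \eqref{tow:conv1} with Theorems~\ref{thm:Aprior1} and~\ref{thm:Aprior2}, bounds the resulting initial-data terms via Proposition~\ref{prop:ratesPdx}, Lemma~\ref{lem:initialRatesLagr}, Lemma~\ref{lem:boundHrUrgr}, and Lemma~\ref{lem:rateux}, and closes the $y$-estimate by integrating \eqref{eq:ODE1}. Your explicit bound $\|\delta_{2\Dx}\bar u_x^2\|_1\leq 2^{\beta+1}\bar F_{\mathrm{ac},\infty}^{\nicefrac{1}{2}}|\bar u_x|_{2,\beta}\Dx^{\beta}$ (valid since $2\Dx\in(0,2]$) is precisely the step the paper absorbs implicitly into its constant $C^{\star}$, and it is correct.
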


\begin{proof}
Combining \eqref{tow:conv1}, Theorem~\ref{thm:Aprior1}, and Theorem~\ref{thm:Aprior2} yields, for all $t\in [0,T]$, 
\begin{align*}
		\|U(t) &- \hat U_{\Dx}(t)\|_{\infty} 
		\\ & \leq \|\bar{U} - \bar{U}_{\Dx}\|_{\infty} \nonumber
		\\ & \quad+ \frac{1}{4} \bigg|\int_0^t \Big ( \int_{-\infty}^{\xi} \big(V_{\xi} - \hat V_{\Dx, \xi} \big)(s,\eta)d\eta -  \int_{\xi}^{\infty} \big(V_{\xi} - \hat V_{\Dx, \xi} \big)(s, \eta)d\eta \Big)ds \bigg|\\
		& \leq \|\bar{U} - \bar{U}_{\Dx}\|_{\infty} + T( \Dx+ \|\delta_{2\Dx}\bar u_x^2\|_1) 
		\\ & \quad +\frac14  \int_0^T \|\mathcal{V}_r(s)- \hat{\mathcal{V}}_{\Dx,r}(s)\|_{L^1(\mathcal{B}^c)}ds\\
	& \leq  \|\bar{U} - \bar{U}_{\Dx}\|_{\infty} + T\big( \Dx+ \|\delta_{2\Dx}\bar u_x^2\|_1\big) \\
	&\quad+\frac14 \bigg(T \Big(4+T\|\alpha'\|_\infty  \Big(\|\bar u \|_\infty+\frac14T  \bar F_{\infty} \Big)\! \Big) \|\bar{\mathcal{H}}_r-\bar{\mathcal{H}}_{\Dx,r}\|_{L^1(\mathcal{B}^c)}\\ \nonumber
& \qquad \qquad + T\| g(\bar{\mathcal{X}})- g(\bar{\mathcal{X}}_{\Dx})\|_{L^1 (\mathcal{B}^c)}\\ \nonumber
&\qquad \qquad+ 4\Big(1+ T\|\alpha'\|_\infty \Big(\|\bar u\|_\infty + (1+ \sqrt{2})\bar F_\infty^{\nicefrac{1}{2}} \Dx^{\nicefrac{1}{2}}+ \frac14 T \bar F_\infty\Big) \! \Big)\\ \nonumber
& \qquad \qquad \qquad \qquad \times \! \Big(1+ \frac12 T\Big) \bar G_\infty^{\nicefrac{1}{2}}  \|\bar{\mathcal{U}}_r-\bar{\mathcal{U}}_{\Dx,r}\|_{L^2(\mathcal{B}^c)} \\ \nonumber
& \qquad \qquad+  T^2\|\alpha'\|_\infty \bar F_\infty \|\bar u - \bar u_{\Dx}\|_\infty + 4T\|\alpha'\|_\infty\bar G_\infty \Dx \bigg) e^{\frac14 T^2\|\alpha'\|_\infty  \bar G_\infty}.
\end{align*}
Furthermore, Proposition~\ref{prop:ratesPdx}, Lemma~\ref{lem:initialRatesLagr}, Lemma~\ref{lem:boundHrUrgr} and Lemma~\ref{lem:rateux} imply that there exist constants $\hat C$ and $C^{\star}$, only dependent on $T$, $\|\alpha '\|_\infty$, $\|\bar u\|_\infty$,  $\bar G_\infty$, $\beta$, and $|\bar u_x|_{2,\beta}$, such that 
\begin{align*}
\sup_{t\in [0,T]} \|U(t) - \hat U_{\Dx}(t)\|_{\infty}& \leq \hat C \big(\Dx^{\nicefrac{1}{2}}+ \|\delta_{2\Dx} \bar u_x^2\|_1 + \|\bar u_x- \bar u_{\Dx,x}\|_2 \big)\\[-0.9ex] 
& \leq  C^{\star} (\Dx^{\nicefrac{1}{2}}+ \Dx^{\nicefrac{\beta}{2}}).
\end{align*}

To finish the proof,  observe that 
\begin{equation*}
\sup_{t\in [0,T]} \|y(t) - \hat y_{\Dx}(t)\|_{\infty}\leq \|\bar{y} - \bar{y}_{\Dx}\|_{\infty}+ T \sup_{t\in [0,T]} \|U(t) - \hat U_{\Dx}(t)\|_{\infty},
\end{equation*}
which in turn, by Lemma~\ref{lem:initialRatesLagr}, implies 
\begin{equation*}
\sup_{t\in [0,T]} \|y(t) - \hat y_{\Dx}(t)\|_{\infty}\leq (2+C^{\star}T) (\Dx^{1/2}+ \Dx^{\beta/2}). \qedhere
\end{equation*}	
\end{proof}

It has been vital in the above proof to have a convergence rate for $\|\bar u_x-\bar u_{\Dx,x}\|_2$, but also that $\bar u_{x}\in B_2^\beta$. As already emphasized in Section~\ref{sec:InitialRatesEuler}, this is natural, since $\bar u_x$ contains all the information about future wave breaking, i.e., wave breaking occurring at $t>0$ for the exact solution, cf. \eqref{eq:setsux}. The same role is played by $\bar u_{\Dx,x}$ for the numerical solution. Thus, to prevent the energy discrepancy between the exact and the numerical solution from becoming too large within finite time, one needs to control $\|\bar u_x-\bar u_{\Dx,x}\|_2$ and $\|\delta_{2\Dx} \bar u_x^2\|_1$, which is possible when $\bar u_x\in B_2^\beta$. 

Our main theorem is now an immediate consequence of Theorem~\ref{thm:ConvRateLagr} and \eqref{eq:est_uNoLocal}. 

\begin{theorem}\label{thm:ConvRateEuler}
Suppose $(\bar u, \bar \mu, \bar \nu)\in \D_0^\alpha$ with $\bar u_x\in B_2^\beta$ for some $\beta\in (0,1]$. Let $(u,\mu,\nu)(t)=T_t((\bar u, \bar\mu, \bar \nu))$ and $(u_{\Dx}, \mu_{\Dx}, \nu_{\Dx})(t)=T_{\Dx,t}\circ P_{\Dx}((\bar u,\bar \mu,\bar \nu))$, then there exists a constant $D$, dependent on $T$, $\|\alpha '\|_\infty$, $\|\bar u\|_\infty$,  $\bar G_\infty$, $\beta$, and $|\bar u_x|_{2,\beta}$, such that 
\begin{equation*}
\sup_{t\in [0,T]} \|u(t) -u_{\Dx}(t)\|_{\infty}\leq D(\Dx^{\nicefrac{1}{8}}+ \Dx^{\nicefrac{\beta}{4}}).
\end{equation*}
\end{theorem}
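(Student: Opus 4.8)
The plan is to read off the Eulerian rate directly from the Lagrangian rate of Theorem~\ref{thm:ConvRateLagr} together with the estimate \eqref{eq:est_uNoLocal}, in which the accumulated local error has already been split off. First I would take the supremum over $t\in[0,T]$ in \eqref{eq:est_uNoLocal}, so that
\begin{align*}
\sup_{t\in[0,T]}\|u(t)-u_{\Dx}(t)\|_\infty &\le \sup_{t\in[0,T]}\|U(t)-\widehat U_{\Dx}(t)\|_\infty + \bar G_\infty^{\nicefrac12}\Big(\sup_{t\in[0,T]}\|y(t)-\widehat y_{\Dx}(t)\|_\infty\Big)^{\nicefrac12} \\
&\quad + C\Dx^{\nicefrac14} + \sqrt{C\bar G_\infty}\,\Dx^{\nicefrac18},
\end{align*}
where $C$ is the constant from \eqref{eq:localErrLagr}. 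The first two terms on the right-hand side are the projection error, and I would bound them using Theorem~\ref{thm:ConvRateLagr}: the $U$-term is at most $\tilde D(\Dx^{\nicefrac12}+\Dx^{\nicefrac\beta2})$, and for the $y$-term one invokes the elementary inequality $\sqrt{a+b}\le\sqrt a+\sqrt b$ to turn $\tilde D^{\nicefrac12}(\Dx^{\nicefrac12}+\Dx^{\nicefrac\beta2})^{\nicefrac12}$ into a multiple of $\Dx^{\nicefrac14}+\Dx^{\nicefrac\beta4}$.

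What remains is purely bookkeeping with exponents. Since $\Dx\le 1$, any power $\Dx^a$ with $a\ge\min\{\nicefrac18,\nicefrac\beta4\}$ is bounded by $\Dx^{\nicefrac18}+\Dx^{\nicefrac\beta4}$; in particular $\Dx^{\nicefrac12}$, $\Dx^{\nicefrac\beta2}$ and $\Dx^{\nicefrac14}$ are all absorbed, leaving the local-error contribution $\Dx^{\nicefrac18}$ and the projection-error contribution $\Dx^{\nicefrac\beta4}$ as the two genuinely limiting terms. Collecting the resulting constants — which depend only on $T$, $\|\alpha'\|_\infty$, $\|\bar u\|_\infty$, $\bar G_\infty$, $\beta$, and $|\bar u_x|_{2,\beta}$, by Theorem~\ref{thm:ConvRateLagr} and the fact that the constant $C$ in \eqref{eq:localErrLagr} depends only on $\|\alpha'\|_\infty$, $\|\bar u\|_\infty$, $\bar G_\infty$ and $T$ — yields the claimed bound $D(\Dx^{\nicefrac18}+\Dx^{\nicefrac\beta4})$, i.e.\ \eqref{eq:rateIntro} with $\gamma=\tfrac14\min\{\beta,\tfrac12\}$. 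The passage from \eqref{eq:est_u} (written with $\bar F_\infty$) to \eqref{eq:est_uNoLocal} (written with $\bar G_\infty$) is legitimate because $(\bar u,\bar\mu,\bar\nu)\in\D_0^\alpha$ forces $\bar\mu=\bar\nu$ and hence $\bar F_\infty=\bar G_\infty$.

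At this stage there is essentially no obstacle left: the substantive work — controlling the contribution of the problematic set $\mathcal{A}$ (equivalently $\mathcal{B}$) to the critical term \eqref{eq:trouble} via the rescaling maps $\phi,\psi$, the identity $\mathrm{meas}(\mathcal{B}_j)=\mathrm{meas}(\mathcal{B}_{\Dx,j})$ of Lemma~\ref{lem:coincidingLengths}, and the Gronwall argument behind Theorem~\ref{thm:Aprior2}, together with the independent analysis of the accumulated local error behind \eqref{eq:localErrLagr} — has already been carried out in the preceding results. The only point requiring a moment's care is to check that the square-root term does not spoil the rate: because $\|y(t)-\widehat y_{\Dx}(t)\|_\infty$ inherits the rate $\Dx^{\nicefrac12}+\Dx^{\nicefrac\beta2}$, its square root is $\Dx^{\nicefrac14}+\Dx^{\nicefrac\beta4}$, which is still no worse than the overall rate $\Dx^{\nicefrac18}+\Dx^{\nicefrac\beta4}$; hence the genuine bottleneck is the $\mathcal{O}(\Dx^{\nicefrac18})$ local error together with the $\mathcal{O}(\Dx^{\nicefrac\beta4})$ projection error, exactly as asserted.
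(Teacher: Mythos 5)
Your proposal is correct and follows exactly the paper's route: the paper also obtains the theorem as an immediate consequence of Theorem~\ref{thm:ConvRateLagr} and \eqref{eq:est_uNoLocal}, with the same absorption of the powers $\Dx^{\nicefrac{1}{2}}$, $\Dx^{\nicefrac{\beta}{2}}$, and $\Dx^{\nicefrac{1}{4}}$ into $\Dx^{\nicefrac{1}{8}}+\Dx^{\nicefrac{\beta}{4}}$ for $\Dx\leq 1$, and the same handling of the square-root term via $\sqrt{a+b}\leq\sqrt{a}+\sqrt{b}$. Your remarks on $\bar F_\infty=\bar G_\infty$ and on the constant dependencies are likewise consistent with the paper.
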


\begin{remark}
When $\bar{u}_x \in B_2^{\beta}$ for $\frac{1}{2} <\beta \leq 1$, the local error dominates the error in \eqref{eq:est_uNoLocal}, cf. \eqref{eq:localErrLagr} and Theorem~\ref{thm:ConvRateLagr}. On the other hand, we do not expect that one can achieve anything better than $\mathcal{O}(\Dx^{\nicefrac{1}{4}})$ for the projection error by our approach, since $B_\infty^\beta(L^2(\R))$, which satisfies $B_\infty^\beta(L^2(\R))\subset B_2^{\beta}$, for $\beta>1$, only consists of constant functions, see \cite[Sec. 3]{ConstructiveApprox}. 

In the case of {\em multipeakons}, i.e., piecewise linear solutions, which are of particular importance as their piecewise linear structure is naturally preserved by the HS equation, it holds that $\bar{u}_x \in B_2^{\nicefrac{1}{2}}$, see \cite[Ex. 6.1]{AlphaRate}. Hence, for such data, there is a balance between the projection error and the local error.  
\end{remark}

\begin{remark}\label{rem:convrate}
	In the particular case of $\alpha \in [0, 1)$, the numerical solution $X_{\Dx}(t)$ coincides with $\hat X_{\Dx}(t)$ and hence the local error vanishes, which corresponds to setting $C=0$ in \eqref{eq:localErrLagr}. Thus, \eqref{eq:est_uNoLocal} and Theorem~\ref{thm:ConvRateLagr}, imply
\begin{equation}\label{rem:ConvRateEuler}
		\sup_{t \in [0,T ]} \|u(t) - u_{\Dx}(t)\|_{\infty} \leq \bar{D} \Dx^{\nicefrac{\beta}{4}}, 
	\end{equation}
	for a constant $\bar{D}$ dependent on $T$,  $\bar{G}_{\infty}$, $\|\bar u\|_\infty $, $\beta$, and $|\bar{u}_x|_{2, \beta}$. 
\end{remark}

\begin{remark}
	The concept of $\alpha$-dissipative solutions was introduced in \cite{AlphaHS} for $\alpha \in W^{1, \infty}(\R, [0, 1))$. The value $1$ is not permitted, because in that case the semigroup property of the solution operator $S_t$, i.e, $S_{s+t}(X)= S_t\circ S_s(X)$ for $s, t\geq 0$, might not hold, see \cite[Ex. 2.4.4]{PhdThesisNordli}. However, our numerical method and hence our proof, do not rely on the semigroup property and therefore Theorem~\ref{thm:ConvRateEuler} still holds if we extend our class of solutions to $\alpha \in W^{1, \infty}(\R, [0, 1])$, and \eqref{rem:ConvRateEuler} holds for $\alpha=1$. 
\end{remark}

\subsection{An alternative mapping when $\bar \nu_{\mathrm{sc}} = 0$}\label{sec:AlternativeMap}
An alternative way of deriving a convergence rate is presented in \cite{AlphaRate} for the case $\alpha\in [0,1]$ and $\bar \nu_{\mathrm{sc}}=0$. Instead of introducing the two mappings $\phi$ and $\psi$, which are Lipschitz continuous and satisfy $\bar y(\phi(r))=\bar y_{\Dx}(\psi(r))$, the author of \cite{AlphaRate} constructs a function $f$, defined in \cite[(4.24)]{AlphaRate}, which is discontinuous as it maps intervals along which $\bar y$ is constant to intervals where $\bar y_{\Dx}$ is constant and $\bar y(\xi)\not = \bar y_{\Dx}(f(\xi))$ in general, see Figure~\ref{fig:comparisonMaps} for one particular example. Furthermore, since both $\mathcal{S}$ and $\mathcal{S}_{\Dx}$, given by \eqref{eq:S} and \eqref{eq:Sdx}, respectively, can be written as an at most countable union of disjoint closed intervals whenever $\bar \nu_{\mathrm{sc}} = 0$, this ensures that $f(\mathcal{S}) = \mathcal{S}_{\Dx}\setminus E$, where $E \subset \R$ is a set of measure zero. As a consequence, one infers that 
\begin{align*}
	V_{\xi}(t, \xi) - V_{\Dx, \xi}(t, f(\xi))=0 \hspace{0.45cm} \text{for a.e. } \xi \in \mathcal{S},
\end{align*}
which allows one to derive an analogue of Theorem~\ref{thm:Aprior1}, cf. the proof of \cite[Thm. 4.6]{AlphaRate} and subsequently a convergence rate, see \cite[Thm. 4.11]{AlphaRate}. However, this alternative construction is limited to the case $\bar\nu_{\mathrm{sc}} = 0$, since one will otherwise encounter issues with the measurability of, e.g., $\bar{V}_{\Dx, \xi}(f(\xi))$. On the other hand, a major benefit of the mapping $f$ is the fact that it preserves the piecewise linear structure of $\bar{y}_{\Dx}$ and that $\bar{V}_{\Dx, \xi}(f)$ is piecewise constant. This is not necessarily true for the mappings $\phi$ and $\psi$ if $\bar{y}$ is not piecewise linear. 

\begin{figure}
	\includegraphics[width=0.85\linewidth]{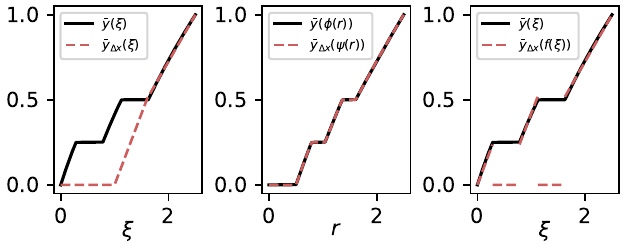}
	\captionsetup{width=.95\linewidth}
	\vspace{-0.25cm}
	\caption{The very left plot depicts $\bar y$ and $\bar y_{\Dx}$, while $\bar y(\phi)$ and $\bar y_{\Dx}(\psi)$, with $\phi$ and $\psi$ from Definition~\ref{def:maps}, are shown in the middle, whereas $\bar y$ and $\bar y_{\Dx}(f)$, with $f$ defined in \cite[Sec. 4]{AlphaRate}, are displayed in the rightmost plot.}
	\label{fig:comparisonMaps}
\end{figure}

Moreover, comparing the convergence rate from  \cite[Thm. 4.11]{AlphaRate} with that in Remark~\ref{rem:convrate}, or more precisely \eqref{rem:ConvRateEuler}, reveals that there is one additional benefit of using the mappings $\phi$ and $\psi$ instead of $f$ -- one obtains an improved convergence rate. Instead of having the $\mathcal{O}(\Dx^{\nicefrac{\beta}{8}})$-error estimate from \cite[Thm. 4.11]{AlphaRate}, we now obtain the improved $\mathcal{O}(\Dx^{\nicefrac{\beta}{4}})$-rate in \eqref{rem:ConvRateEuler}. The reason is that we now have, thanks to Lemma~\ref{lem:rateux} and Lemma~\ref{lem:boundHrUrgr}, $\|\bar{\mathcal{U}}_r - \bar{\mathcal{U}}_{\Dx, r}\|_{L^2(\mathcal{B}^c)} \leq \mathcal{O}(\Dx^{\nicefrac{\beta}{2}})$, as opposed to $\|\bar{U}_\xi - \bar{U}_{\Dx, \xi}(f)\|_{L^2(\mathcal{S}^c)} \leq \mathcal{O}(\Dx^{\nicefrac{\beta}{4}})$ in \cite[Lem. 4.10]{AlphaRate}.

\section{A numerical study of convergence rates}\label{sec:NumericalExp}
To numerically validate the theoretical convergence rate from Theorem~\ref{thm:ConvRateEuler}, we investigate three different examples: two multipeakon examples, i.e., piecewise linear solutions, and the case of a cusped initial wave profile. These examples highlight distinct challenges for the numerical method. 

A characteristic feature of the multipeakon examples is that a finite amount of energy concentrates at each of the finitely many wave breaking occurrences, leaving the cumulative energy, $F(t)$, discontinuous in space. For the cusp example, on the other hand, wave breaking happens, for each $t \in [0, 3]$, in the form of infinitesimal energy concentrations, such that $F(t)$ is absolutely continuous for all $t\geq 0$. 

We present plots showing the time evolution of the wave profile and the cumulative energy for each of the examples. Since the exact solution is unavailable for the cusped initial data, Example~\ref{ex:plainCusp}, we instead compute a numerical solution with $\Dx = 10^{-5}$ and use that as a reference solution, denoted by $(u_{\mathrm{ref}}, F_{\mathrm{ref}})$. Furthermore, we also compute a sequence of numerical solutions, $\{(u_{\Dx_k}, F_{\Dx_k}) \}_{k}$, for each example,  where 
\begin{equation}\label{eq:meshFam}
	\Dx_k = 4^{-k}, \hspace{0.35cm} k \geq 1, 
\end{equation}
and, based on that sequence, we compute the relative errors 
\begin{equation}\label{eq:relativeErrs}
	\mathrm{Err}_k(T) := \sup_{t \in [0, T]} \frac{\|u(t) - u_{\Dx_k}(t)\|_{\infty}}{\|u(t)\|_{\infty}},
\end{equation}
with $u(t)$ being replaced by $u_{\mathrm{ref}}(t)$ for the cusp example. These errors are graphically visualized along with an optimally fitted least square (LS) regression line, which represents the numerically computed convergence order. Additionally, we also include tables that display the corresponding experimental orders of convergence (EOCs), computed via 
\begin{equation*}
	\mathrm{EOC}_k(T) := \frac{\ln\big(\nicefrac{\mathrm{Err}_{k-1}(T)}{\mathrm{Err}_k(T)}\big)}{\ln\big(\nicefrac{\Dx_{k-1}}{\Dx_k})}. 
\end{equation*}

\begin{example}\label{ex:Multipeakon}
Let us start by investigating the Cauchy problem with the following initial data
\begin{align*}
	\bar{u}(x) &= \begin{cases}
		3, & x \leq 0, \\
		3-x, & 0 < x \leq 1, \\
		4-2x, & 1 < x \leq 2, \\
		0, & 2 < x, 
		\end{cases}  \\
	d\bar{\mu} &= d\bar{\nu} = d\delta_0 + \bar{u}_{x}^2dx, 
\end{align*}
and any $\alpha \in W^{1, \infty}(\R, [0, 1))$ satisfying $\alpha(\frac{11}{4}) = \frac{3}{4}$ and $\alpha(\frac{35}{8}) =\tfrac{9}{10}$. The exact solution experiences wave breaking at $(t, x) = (0, 0)$ and along all the characteristics starting in $(1, 2]$ and $(0, 1]$  at $\tau_1 =1$ and $\tau_2=2$, respectively. Since the projected initial data, $\bar{u}_{\Dx}$, coincides with $\bar u$ for \eqref{eq:meshFam},
 the numerical method performs extremely well as suggested by Figure~\ref{fig:comparisonMult}, where the exact solution $(u, F)(t)$, computed in Appendix~\ref{sec:Multipeakon}, is compared to two numerical solutions $(u_{\Dx_j}, F_{\Dx_j})(t)$ for $j \in \{c, f \}$ with $\Dx_c = 4^{-2}$ and $\Dx_f=4^{-4}$ at the times $t=0, 1$, and $2$. In fact, computing the relative errors $\mathrm{Err}_k(T)$, given by \eqref{eq:relativeErrs}, reveals that the numerical solution, computed with $\alpha_1$ below, coincides with the exact solution, up to machine precision, for $\Dx = \frac{1}{4}$ when $T < 1$ and for all $T \geq 0$ when $\Dx = 4^{-2}$. Hence, round-off errors become more and more dominant as we refine the mesh, and the corresponding EOCs are therefore not included for this example. 
 
 \begin{figure}
	\includegraphics{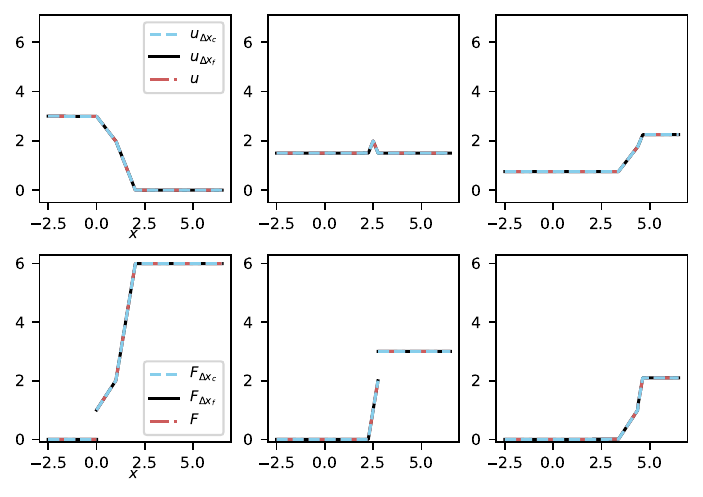}
	\captionsetup{width=.975\linewidth}
	\vspace{-0.4cm}
	\caption{
	A comparison of $u$ (top row) and $F$ (bottom row), both with dotted red lines, to that of $u_{\Dx_j}$ (top row) and $F_{\Dx_j}$ (bottom row) for $\Dx_c = 4^{-2}$ (dashed blue) and $\Dx_f = 4^{-4}$ (solid black) for Example~\ref{ex:Multipeakon}. The solutions are compared from left to right at $t=0$, $1$, and $2$.}
	\label{fig:comparisonMult}
\end{figure}

In addition, we compute numerical $\alpha$-dissipative solutions with two different $\alpha\in W^{1, \infty}(\R, [0,1))$ satisfying $\alpha(\frac{11}{4}) = \frac{3}{4}$ and $\alpha(\frac{35}{8}) =\tfrac{9}{10}$. In particular, we use 
\begin{align}\label{eq:alpha1}
	\alpha_1(x) &= \begin{cases}
		\frac{3}{11}x, & 0 \leq x \leq \frac{11}{4}, \\
		\frac{6}{65}x + \frac{129}{260}, & \frac{11}{4} < x \leq \frac{35}{8}, \\
		\frac{9}{10}, & \frac{35}{8} < x, \end{cases} 
\end{align}
and  
\begin{align}\label{eq:alpha2}
	\alpha_2(x) &= \begin{cases}
	-1 + e^{\frac{4}{11}\ln(\frac{7}{4})x}, & 0 \leq x \leq \frac{11}{4}, \\
	\frac{48}{65}x^2 -\frac{336}{65}x +\frac{2439}{260}, &\frac{11}{4} \leq x \leq \frac{35}{8}, \\ 
	\frac{9}{10} & \frac{35}{8} < x,
	\end{cases}
\end{align}
which are compared in Figure~\ref{fig:differentAlphas} and which both lead to the same exact $\alpha$-dissipative solution. Yet, a notable difference is that $\|\alpha_1'\|_{\infty} = \frac{3}{11} \approx 0.273$, whereas $\|\alpha_2'\|_{\infty} = \frac{84}{65} \approx 1.292$. In spite of this difference, the computed relative errors are identical, for any $k\geq 1$, for these two numerical solutions, indicating that the computed family, $\{u_{\Dx_k}\}_{k}$, is indifferent with respect to $\|\alpha'\|_{\infty}$ for this example.

In general, one cannot expect to achieve close to machine precision for $\Dx \sim 0.1$, which corresponds to $k=1$ or $k=2$ in \eqref{eq:meshFam}. If we for instance had chosen a different family of mesh parameters, say $\widetilde{\Dx}_k = 3^{-k}$, or if the initial wave breaking had occurred at $x \notin \{x_{2j}\}_{j \in \mathbb{Z}}$, then the errors would become much worse. 

\begin{figure}
	\makebox[\linewidth]{
    \centering
    \begin{subfigure}[t]{2.55in}
    \includegraphics[scale=0.85]{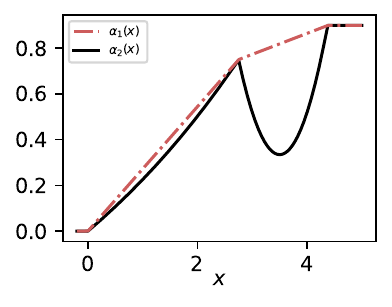}
    \vspace{-0.25cm}
    \captionsetup{width=.925\linewidth}
    \caption{A visualization of $\alpha_1$ and $\alpha_2$ from \eqref{eq:alpha1} and \eqref{eq:alpha2}, respectively, in Example~\ref{ex:Multipeakon}. Note that $\alpha_1(\nicefrac{11}{4}) = \alpha_2(\nicefrac{11}{4})$ and $\alpha_1(\nicefrac{35}{8}) = \alpha_2(\nicefrac{35}{8})$ such that they lead to the same $\alpha$-dissipative solution. }
    \label{fig:differentAlphas}
    \end{subfigure}
    ~
    \begin{subfigure}[t]{2.25in}
    \includegraphics[scale=0.85]{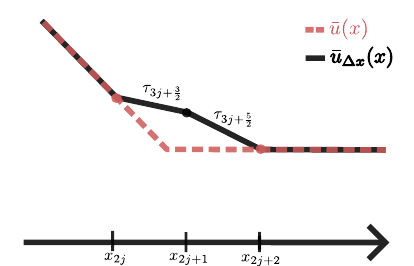}
    \vspace{-0.65cm}
    \captionsetup{width=.95\linewidth}
    \caption{The mesh does not perfectly align with the nodes of the multipeakon in Example~\ref{ex:complexMultipeakon}, such that the projected wave profile $\bar{u}_{\Dx}$ may contain some linear segments with different breaking times.}
    \label{fig:mismatch}
    \end{subfigure}
    }
\end{figure}

\end{example}

The next example illustrates that if the nodes of the initial wave profile do not perfectly align with the mesh, then the errors become much worse.

\begin{example}\label{ex:complexMultipeakon}
Next, let us study the Cauchy problem for the multipeakon initial data from \cite[Ex. 5.1]{Alpha2}, which is given by  
\begin{align*}
	\bar{u}(x) &= \begin{cases}
		3, & x \leq 0, \\
		3-x & 0 < x \leq 1, \\
		2, & 1 < x \leq \frac{400}{361}, \\
		\frac{58}{19} - \frac{19}{20}x, & \frac{400}{361} \leq x \leq \frac{800}{361}, \\
		 \frac{18}{19}, & \frac{800}{361} < x \leq \frac{200}{81}, \\
		\frac{542}{171} - \frac{9}{10}x, &  \frac{200}{81} < x \leq \frac{100}{27}, \\
		-\frac{28}{171}, & \frac{100}{27} < x, 
		\end{cases}  \\
	d\bar{\mu} &= d\bar{\nu} = \bar{u}_{x}^2dx, \\
	\alpha(x) &= \begin{cases}
		0, & x \leq \frac{1434}{361}, \\
		-\frac{478}{127} + \frac{361}{381}x, & \frac{1434}{361} < x \leq \frac{6879}{1444}, \\
		\frac{1}{1705}(361x -441), &\frac{6879}{1444} < x \leq 5, \\
		\frac{4}{5}, &5 < x. 
		\end{cases}
\end{align*}

The exact solution $(u, F)(t)$ experiences wave breaking at $\tau_1= 2$, $\tau_2 =\tfrac{40}{19}$, and $\tau_3=\tfrac{20}{9}$, and, as in the previous example, $F(\tau_j)$ has a jump discontinuity at the spatial point  where wave breaking occurs, see \cite[App. A]{Alpha2} for further details. Yet, despite its resemblance to the previous example, the computed relative errors in Table~\ref{tab:EOCComplexMult} are much larger, due to the following two reasons. First and foremost, there is a mismatch between the gridpoints and the nodes of the multipeakon, such that the projected data $\bar{u}_{\Dx}$ contains a small number of linear segments with slopes that differ from those of the exact data, which is illustrated in Figure~\ref{fig:mismatch}. As a consequence, there are a few additional numerical breaking times, given by \eqref{eq:numBreaking}, which do not coincide with $\tau_j$  for $j=1$, $2$, and $3$. However, there are even-numbered gridpoints situated at $x=0$ and $x=1$ when $\Dx$ is given by \eqref{eq:meshFam}, hence only the $4$ rightmost nodes can give rise to additional numerical breaking times in this case. In addition, also the local errors discussed in Section~\ref{sec:FirstAttempt} contribute to the relative errors in Table~\ref{tab:EOCComplexMult}. In particular, as observed in \cite[Ex. 5.1]{Alpha2}, if 
\begin{equation*}
	\Dx \geq 1.75\! \cdot \! 10^{-2}
\end{equation*}
then the extracted sequence $\{\tau_k^*\}_{k}$ does not contain the breaking time $\tau_2=\tfrac{40}{19}$, such that one only approximates the amount of energy to be removed at this time, cf. \eqref{eq:iterationSeq}--\eqref{eq:beta3}. However, due to the mismatch between the gridpoints and the $4$ rightmost nodes, there might still be local errors even for $\Dx \leq 1.75\! \cdot \! 10^{-2}$. This is confirmed by Figure~\ref{fig:complexMult}, where the exact solution $(u, F)(t)$ is compared to two approximations, $(u_{\Dx_j}, F_{\Dx_j})(t)$ for $j \in \{c, d\}$ with $\Dx_c = 4^{-1}$ and $\Dx_f =4^{-4}$ at the times $t=0$, $\tfrac{40}{19}$, and $3$.  The finer approximation, $(u_{\Dx_f}, F_{\Dx_f})(t)$, computed with $\Dx_f < 1.75\! \cdot \! 10^{-2}$, agrees well with the exact solution, except that we observe a large energy discrepancy at $t=\tfrac{40}{19}$. This is due to the aforementioned mismatch, which implies that the wave breaking $\tau_2=\tfrac{40}{19}$ is not present in the list $\{\tau_k^*\}$ and this breaking time is therefore delayed according to the iteration scheme, given by \eqref{eq:iterationSeq}--\eqref{eq:beta3}. 

Nevertheless, as $\bar{u}_{x}$ belongs to $BV(\R) \cap L^1(\R)\subset B_2^{\nicefrac{1}{2}} $, cf. Remark~\ref{remark:beta}, Theorem~\ref{thm:ConvRateEuler} ensures that
\begin{equation}\label{eq:multipeakonRate}
	\sup_{t \in [0, T]} \|u(t) - u_{\Dx}(t)\|_{\infty} \leq \mathcal{O}(\Dx^{\nicefrac{1}{8}}).
\end{equation} 
The computed EOCs in Table~\ref{tab:EOCComplexMult} and  the LS convergence order in Figure~\ref{fig:errorEx2}, suggest that this error estimate is rather pessimistic. On the other hand, we do expect the numerical method to perform well on multipeakon data, since the projection operator $P_{\Dx}$ from Definition~\ref{def:ProjOperator} is piecewise linear. 

\begin{figure}
	\includegraphics{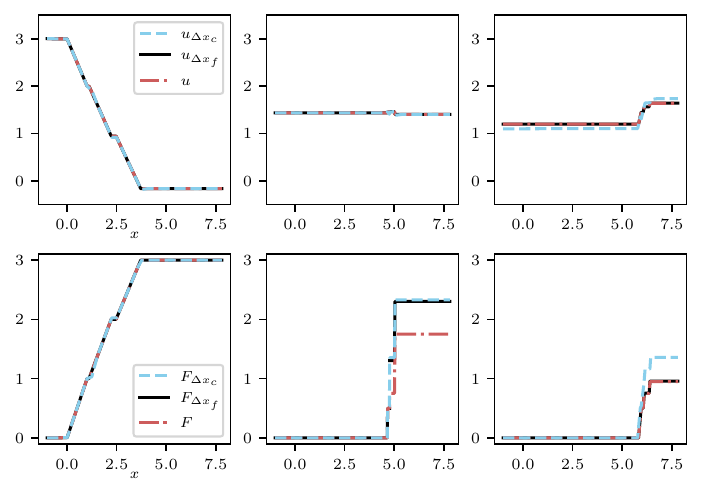}
	\captionsetup{width=.975\linewidth}
	\vspace{-0.4cm}
	\caption{A comparison of $u$ (top row) and $F$ (bottom row), both with dotted red lines, to that of $u_{\Dx_j}$ (top row) and $F_{\Dx_j}$ (bottom row) for $\Dx_c = 4^{-1}$ (dashed blue) and $\Dx_f = 4^{-4}$ (solid black ) for Example~\ref{ex:complexMultipeakon}. The solutions are compared from left to right at $t=0$, $\frac{40}{19}$, and $3$.}
	\label{fig:complexMult}
\end{figure}

\begin{table}[H]
 \setlength\tabcolsep{2.5pt}
\begin{tabular}{ c|ccccccc} 
 \hline
 $k$ & $1$ & $2$ & $3$ & $4$ & $5$ & $6$ & $7$  \\ 
 \hline
 $\mathrm{Err}_k(\nicefrac{3}{2})$ & $2.8 \!\cdot\! 10^{-2}$ & $7.5 \! \cdot \!10^{-3}$ & $1.5 \!\cdot \!10^{-3}$ & $4.6 \! \cdot \! 10^{-4}$ & $8.9 \! \cdot \! 10^{-5}$ & $2.3 \! \cdot \! 10^{-5}$ & $4.0 \!\cdot \!10^{-6}$\\
 $\mathrm{EOC}_k(\nicefrac{3}{2})$ & $-$ & $0.94$ & $1.14$ & $0.88$ & $1.18$ & $0.99$ & $1.25$ \\
 \hline 
 $\mathrm{Err}_k(3)$ & $8.6\! \cdot\! 10^{-2}$ & $1.8 \! \cdot\! 10^{-2}$ & $7.7 \! \cdot 10^{-3}$ & $1.2\! \cdot \! 10^{-3}$ & $4.6 \! \cdot \! 10^{-4}$ & $7.9 \! \cdot\! 10^{-5}$ & $2.3 \! \cdot \! 10^{-5}$ \\
 $\mathrm{EOC}_k(3)$ & $-$ & $1.13$ & $0.61$ & $1.33$ & $0.70$ & $1.26$ & $0.91$ 
 \end{tabular}
 \vspace{-0.35cm}
 \captionsetup{width=.975\linewidth}
\caption{Computed relative errors and experimental orders of convergence for  Example~\ref{ex:complexMultipeakon} with $T=\nicefrac{3}{2}$ (before any wave breaking) and with $T=3$.}
\label{tab:EOCComplexMult}
\end{table}
 \end{example}
 
 \begin{figure}
 		\makebox[\linewidth]{
    \centering
    \begin{subfigure}[t]{2.3in}
    \includegraphics[scale=0.85]{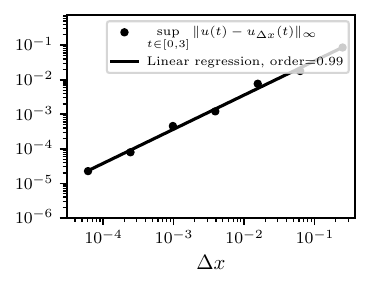}
    \vspace{-0.25cm}
    \captionsetup{width=.925\linewidth}
    \caption{The errors $\displaystyle \sup_{t \in [0, 3]} \!\|u(t) - u_{\Dx}(t)\|_{\infty}$ plotted as a function of the mesh parameter $\Dx$ for Example~\ref{ex:complexMultipeakon}. The LS regression line indicates that the numerical convergence order is about $1$. }
    \label{fig:errorEx2}
    \end{subfigure}
    ~
     \begin{subfigure}[t]{2.2in}
    \includegraphics[scale=0.85]{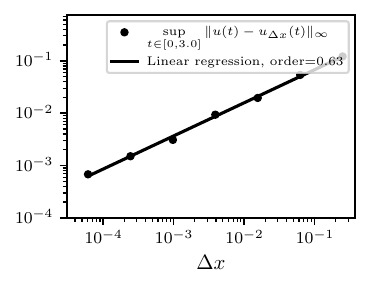}
    \vspace{-0.25cm}
       \captionsetup{width=.995\linewidth}
    \caption{The errors $\displaystyle \sup_{t \in [0, 3]}\! \|u(t) \!- \!u_{\Dx}(t)\|_{\infty}$ from Example~\ref{ex:plainCusp} plotted against the mesh size $\Dx$. The LS slope is approximately equal to $\tfrac{3}{5}$.}
    \label{fig:errorEx3}
    \end{subfigure}
    }
\end{figure}

\begin{example}\label{ex:plainCusp}
The solution to \eqref{eq:HS} associated with the cusp initial data
\begin{align*}
	\bar{u}(x) &= \begin{cases}
	1, &x < -1, \\
	\left|x\right|^{\nicefrac{2}{3}}\!, & -1\leq x \leq1, \\
	1, & 1 < x,
	\end{cases} \\ 
	\bar{F}(x) &= \begin{cases}
	0, & x < -1, \\
	\frac{4}{3} \left(1 + \mathrm{sgn}(x)|x|^{\nicefrac{1}{3}}\right)\!, & -1 \leq x \leq 1, \\
	\frac{8}{3}, & 1 < x, \end{cases} \\ 
	\alpha(x) &= \begin{cases}
	b, & x < -1, \\
	b|x|, & -1 \leq x <0, \\
	0, & 0 \leq x, \end{cases}
\end{align*}
where $b \in [0, 1)$,  experiences wave breaking along a single characteristics for each $t \in [0, 3]$. The exact solution is unavailable, and we therefore compute a numerical solution with $\Dx = 10^{-5}$, denoted  $(u_{\mathrm{ref}}, F_{\mathrm{ref}})(t)$, and use that as a reference solution. The time evolution of this solution is compared to $(u_{\Dx_j}, F_{\Dx_j})$ for $j \in \{c, f\}$ with $\Dx_c$ and $\Dx_f$, as in the previous example, at the times $t=0, \frac{3}{2}$, and $3$ in Figure~\ref{fig:comparisonCusp}. 

Note that $\bar{u}_x$ is less regular than in the previous two examples, especially noteworthy is the fact that $\bar{u}_x \notin L^{\infty}(\R)$. In spite of that, the proof in \cite[Ex. 6.3]{AlphaRate} reveals that $\bar{u}_{x}\! \! \in B_2^{\nicefrac{1}{6}}$ and Theorem~\ref{thm:ConvRateEuler} therefore implies 
\begin{equation*}
	\sup_{t \in [0, T]} \|u(t) - u_{\Dx}(t)\|_{\infty} \leq \mathcal{O}(\Dx^{\nicefrac{1}{24}}).
\end{equation*}
This is 3 times worse than the corresponding error estimate we have for multipeakons, cf.  \eqref{eq:multipeakonRate}. Comparing the computed EOCs in Table~\ref{tab:EOCCusp} for the cusp initial data to the corresponding ones in Table~\ref{tab:EOCComplexMult} for the multipeakon from Example~\ref{ex:complexMultipeakon} suggests that also the numerical convergence order is lower for this example, indicating that the regularity of $\bar{u}_x$ influences the numerical accuracy. This is also confirmed by the slopes of the two LS regression lines in Figures~\ref{fig:errorEx2}--\ref{fig:errorEx3}.

\begin{figure}
	\includegraphics{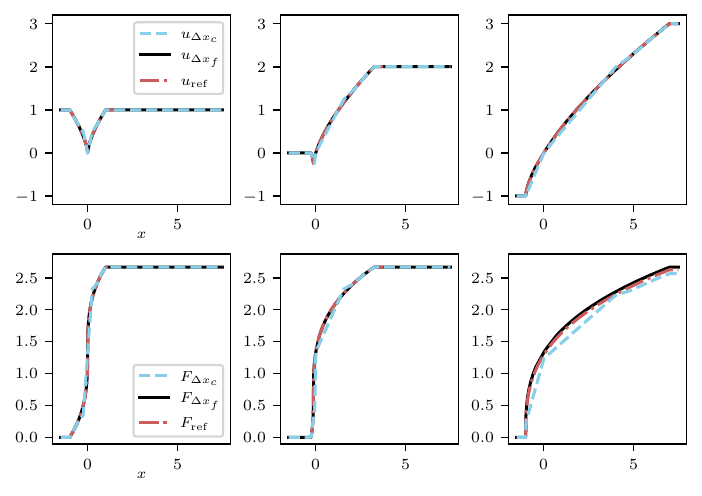}
	\captionsetup{width=.975\linewidth}
	\vspace{-0.4cm}
	\caption{A comparison of $u_{\mathrm{ref}}$ (top row) and $F_{\mathrm{ref}}$ (bottom row), both with dotted red lines, to that of $u_{\Dx_j}$ (top row) and $F_{\Dx_j}$ (bottom row) for $\Dx_c = 4^{-1}$ (dashed blue) and $\Dx_f = 4^{-4}$ (solid black) for Example~\ref{ex:plainCusp}. The solutions are compared from left to right at $t=0$, $\tfrac{3}{2}$, and $3$ with $b=\nicefrac{19}{20}$.} 
	\label{fig:comparisonCusp}
\end{figure}

\begin{table}[H]
 \setlength\tabcolsep{4.0pt}
\begin{tabular}{ c|ccccccc} 
 \hline
 $k$ & $1$ & $2$ & $3$ & $4$ & $5$ & $6$ & $7$ \\ 
 \hline
  $\mathrm{Err}_k(3)$ & $0.12$ & $5.4\! \cdot \!10^{-2}$ & $2.0\! \cdot \! 10^{-2}$ & $9.4\! \cdot\! 10^{-3}$ & $3.1\! \cdot \! 10^{-3}$ & $1.5\! \cdot \! 10^{-3}$ & $6.8\! \cdot\! 10^{-4}$ \\
 $\mathrm{EOC}_k(3)$ & $-$ & $0.587$ & $0.730$ & $0.531$ & $0.800$ & $0.519$ & $0.573$ 
\end{tabular}
\captionsetup{width=.975\linewidth}
\vspace{-0.35cm}
\caption{Computed experimental convergence orders for Example~\ref{ex:plainCusp} with $T=3$ (after all wave breaking occurrences) for $b = \nicefrac{19}{20}$.}
\label{tab:EOCCusp}
\end{table}
\end{example}

\bibliographystyle{plain}

\appendix

\section{Exact solution to the multipeakon example}\label{sec:Multipeakon}
Consider the initial data from Example~\ref{ex:Multipeakon} with any $\alpha \in W^{1, \infty}(\R, [0, 1))$ satisfying $\alpha(\tfrac{11}{4})=\tfrac{3}{4}$ and $\alpha(\tfrac{35}{8})=\tfrac{9}{10}$. By applying the mapping $L$ from Definition \ref{def:MapL}, one obtains
\begin{align*}
	\bar{y}(\xi) &= \begin{cases}
		\xi, & \xi \leq 0, \\
		0, & 0 < \xi \leq 1, \\
		\frac{1}{2}(\xi - 1), & 1 < \xi \leq 3, \\
		\frac{1}{5}(\xi + 2), & 3 < \xi \leq 8, \\
		\xi - 6, & 8 < \xi,
		\end{cases} \\
	\bar{U}(\xi) &= \begin{cases}
		3, & \xi \leq 1, \\
		\frac{1}{2}(-\xi+7), & 1 < \xi \leq 3, \\
		\frac{1}{5}(- 2\xi + 16), & 3 < \xi \leq8, \\
		0, & 8 < \xi,
		\end{cases} \\ 
	\bar{V}(\xi) = \bar{H}(\xi) &= \begin{cases}
		0, & \xi \leq 0, \\
		\xi, & 0 < \xi \leq 1, \\
		\frac{1}{2}(\xi+1), & 1 < \xi \leq 3, \\
		\frac{1}{5}(4\xi - 2), & 3 < \xi \leq 8, \\
		6, & 8 < \xi,
	\end{cases}
\end{align*}
which inserted into \eqref{eq:tau} yields 
\begin{align*}
	\tau(\xi) &= \begin{cases}
	0, & 0 <\xi < 1, \\
	2, & 1 < \xi <3, \\
	1, & 3 < \xi  < 8, \\
	\infty, & \text{otherwise}. 
	\end{cases}
\end{align*}
After solving \eqref{eq:LagrSystem} for $t \in [0, 1)$ with initial data $(\bar{y}, \bar{U}, \bar{V}, \bar{H})$, we find
\begin{align}\label{ex:before}
	y(t, \xi) &= \begin{cases}
		\xi - \frac{3}{4}t^2 +3t, & \xi < 0, \\
		\frac{1}{4}\xi t^2 - \frac{3}{4}t^2 + 3t, & 0 \leq \xi < 1, \\
		\frac{1}{8}(t-2)^2\xi - \frac{5}{8}t^2 + \frac{7}{2}t - \frac{1}{2}, & 1 \leq \xi < 3, \\
		\frac{1}{5}(t-1)^2\xi - \frac{17}{20}t^2 + \frac{16}{5}t + \frac{2}{5}, & 3 \leq \xi < 8, \\
		\xi + \frac{3}{4}t^2 - 6, & 8 \leq \xi,  
	\end{cases} \nonumber \\ 
	U(t, \xi) &= \begin{cases}
		-\frac{3}{2}t + 3, & \xi < 0, \\
		\frac{1}{2}\xi t -\frac{3}{2}t + 3, & 0 \leq \xi < 1, \\
		\frac{1}{4}(t-2)\xi - \frac{5}{4}t + \frac{7}{2}, & 1 \leq \xi < 3, \\
		\frac{2}{5}(t-1)\xi - \frac{17}{10}t + \frac{16}{5}, & 3 \leq \xi < 8, \\
		\frac{3}{2}t, & 8 \leq \xi, 
		\end{cases} \nonumber \\ 
	V(t, \xi) &= \bar{V}(\xi). 
\end{align}
Note that $y(t, \cdot)$ is strictly increasing for $ t\in (0, 1)$, such that we can compute its inverse by solving $x=y(t, \xi)$ in terms of $\xi$. Definition~\ref{def:MapM} therefore implies for $t\in [0,1)$ 
\begin{align}\label{ex:eul1}
	u(t, x) &= \begin{cases}
		- \frac{3}{2}t + 3, & x \leq -\frac{3}{4}t^2 + 3t, \\
		\frac{2}{t}(x - \frac{3}{2}t), & -\frac{3}{4}t^2 + 3t < x \leq -\frac{1}{2}t^2 + 3t, \\
		\frac{2}{t-2}(x -\frac{1}{2}t - 3), & -\frac{1}{2}t^2 + 3t < x \leq -\frac{1}{4}t^2 + 2t +1, \\
		\frac{2}{t-1}(x-\frac{3}{4}t - 2), &  -\frac{1}{4}t^2 + 2t +1 < x \leq \frac{3}{4}t^2 + 2, \\ 
		\frac{3}{2}t, & \frac{3}{4}t^2 + 2<x, 
	\end{cases}   \nonumber \\
	F(t, x) &= \begin{cases}
		0, & x \leq -\frac{3}{4}t^2 + 3t, \\
		\frac{4}{t^2}(x + \frac{3}{4}t^2 - 3t), & -\frac{3}{4}t^2 + 3t < x \leq -\frac{1}{2}t^2 + 3t, \\
		\frac{4}{(t-2)^2}(x + \frac{3}{4}t^2 -4t +1), & -\frac{1}{2}t^2 + 3t < x \leq -\frac{1}{4}t^2 + 2t +1, \\
		\frac{4}{(t-1)^2}(x + \frac{3}{4}t^2 -3t - \frac{1}{2}), & -\frac{1}{4}t^2 + 2t +1 < x \leq \frac{3}{4}t^2 + 2, \\ 	
		6, & \frac{3}{4}t^2 + 2 < x. 
	\end{cases}
\end{align}
At $t=1$, wave breaking takes place for all $\xi \in (3, 8)$, and \eqref{ex:before} yields $y(1, \xi) = \frac{11}{4}$, such that $\alpha(y(1, \xi)) = \frac{3}{4}$ for all $\xi \in (3, 8)$. By computing $V(1, \xi)$ and thereafter solving \eqref{eq:LagrSystem}, one finds, for $t \in [1, 2)$, 
\begin{align*}
	y(t, \xi) &= \begin{cases}
		\xi - \frac{3}{8}t^2 + \frac{9}{4}t + \frac{3}{8}, & \xi \leq 0 \\ 
		\frac{1}{4}t^2\xi - \frac{3}{8}t^2 + \frac{9}{4}t + \frac{3}{8}, & 0 < \xi \leq 1,  \\
		\frac{1}{8}(t-2)^2\xi - \frac{1}{4}t^2 + \frac{11}{4}t - \frac{1}{8}, & 1 < \xi \leq 3, \\
		\frac{1}{20}(t-1)^2\xi - \frac{1}{40}t^2 + \frac{31}{20}t + \frac{49}{40}, & 3 < \xi \leq 8, \\
		\xi + \frac{3}{8}t^2 + \frac{3}{4}t - \frac{51}{8}, & 8 < \xi, 
	\end{cases} \\ 
	U(t, \xi) &= \begin{cases}
		-\frac{3}{4}t + \frac{9}{4}, & \xi \leq 0, \\
		\frac{1}{2}t\xi - \frac{3}{4}t + \frac{9}{4}, & 0 < \xi \leq 1, \\
		\frac{1}{4}(t-2)\xi - \frac{1}{2}t +\frac{11}{4}, & 1 < \xi \leq 3, \\
		\frac{1}{10}(t-1)\xi - \frac{1}{20}t + \frac{31}{20}, & 3 < \xi \leq 8, \\
		\frac{3}{4}t + \frac{3}{4}, & 8 < \xi, 
	\end{cases} \\ 
	V(t, \xi) &= \begin{cases}
		0, & \xi \leq 0, \\
		\xi, & 0 < \xi \leq 1, \\
		\frac{1}{2}(\xi + 1), & 1 < \xi \leq 3, \\
		\frac{1}{5}(\xi + 7), & 3 < \xi \leq 8, \\
		3, & 8 < \xi.
	\end{cases}
\end{align*}

Observe that $y(t, \cdot)$ is strictly increasing for all $t \in (1, 2)$ and hence invertible. Again, by computing its inverse and subsequently using Definition~\ref{def:MapM}, we have for $t\in [1,2)$
\begin{align}\label{ex:eul2}
	u(t, x) &= \begin{cases}
	-\frac{3}{4}t + \frac{9}{4}, & x \leq -\frac{3}{8}t^2 + \frac{9}{4}t + \frac{3}{8}, \\
	\frac{2}{t}(x - \frac{9}{8}t - \frac{3}{8}), & -\frac{3}{8}t^2 + \frac{9}{4}t + \frac{3}{8} < x \leq -\frac{1}{8}t^2 + \frac{9}{4}t + \frac{3}{8}, \\
	\frac{2}{(t-2)}(x - \frac{7}{8}t - \frac{21}{8}), &   -\frac{1}{8}t^2 + \frac{9}{4}t + \frac{3}{8} < x \leq \frac{1}{8}t^2 + \frac{5}{4}t + \frac{11}{8},  \\
	\frac{2}{(t-1)}(x - \frac{3}{4}t -2), &\frac{1}{8}t^2 + \frac{5}{4}t + \frac{11}{8} < x \leq \frac{3}{8}t^2 + \frac{3}{4}t + \frac{13}{8}, \\
	\frac{3}{4}t + \frac{3}{4}, &\frac{3}{8}t^2 + \frac{3}{4}t + \frac{13}{8} < x, 
	\end{cases} \nonumber  \\
	F(t, x) &= \begin{cases}
	0, & x \leq -\frac{3}{8}t^2 + \frac{9}{4}t + \frac{3}{8}, \\
	\frac{4}{t^2}(x + \frac{3}{8}t^2 - \frac{9}{4}t - \frac{3}{8}), & -\frac{3}{8}t^2 + \frac{9}{4}t + \frac{3}{8} < x \leq -\frac{1}{8}t^2 + \frac{9}{4}t + \frac{3}{8}, \\
	\frac{4}{(t-2)^2}(x + \frac{3}{8}t^2 - \frac{13}{4}t + \frac{5}{8}), & -\frac{1}{8}t^2 + \frac{9}{4}t + \frac{3}{8} < x \leq \frac{1}{8}t^2 + \frac{5}{4}t + \frac{11}{8},  \\
	\frac{4}{(t-1)^2}(x + \frac{3}{8}t^2 - \frac{9}{4}t - \frac{7}{8}), & \frac{1}{8}t^2 + \frac{5}{4}t + \frac{11}{8} < x \leq \frac{3}{8}t^2 + \frac{3}{4}t + \frac{13}{8}, \\
	3, &\frac{3}{8}t^2 + \frac{3}{4}t + \frac{13}{8} < x. 
	\end{cases}
\end{align}
The last wave breaking occurs at $(t, x) = (2, \frac{35}{8})$, and takes place for all $\xi \in (1, 3)$, for which $\alpha(y(2, \xi)) = \tfrac{9}{10}$. Thus, after computing $V(2, \xi)$ and then solving \eqref{eq:LagrSystem}, we obtain, for all $t \geq 2$, 
\begin{align*}
	y(t, \xi) &= \begin{cases}
		\xi - \frac{21}{80}t^2 + \frac{9}{5}t + \frac{33}{40}, & \xi \leq 0, \\
		\frac{1}{4}t^2\xi - \frac{21}{80}t^2 + \frac{9}{5}t + \frac{33}{40}, & 0 < \xi \leq1, \\
		\frac{1}{80}(t-2)^2\xi -\frac{1}{40}t^2 + \frac{37}{20}t +\frac{31}{40}, & 1 < \xi \leq 3, \\
		\frac{1}{20}(t-1)^2\xi - \frac{11}{80}t^2 + 2t + \frac{31}{40}, & 3 < \xi \leq 8, \\
		\xi + \frac{21}{80}t^2 + \frac{6}{5}t -\frac{273}{40}, & 8 < \xi, 
	\end{cases} \\ 
	U(t, \xi) &= \begin{cases}
		-\frac{21}{40}t + \frac{9}{5}, & \xi \leq 0, \\
		\frac{1}{2}t\xi - \frac{21}{40}t + \frac{9}{5}, & 0 < \xi \leq 1, \\
		\frac{1}{40}(t-2)\xi -\frac{1}{20}t +\frac{37}{20}, & 1 < \xi \leq 3, \\
		\frac{1}{10}(t-1)\xi - \frac{11}{40}t + 2, & 3 < \xi  \leq 8, \\
		\frac{21}{40}t + \frac{6}{5}, & 8 < \xi,  
	\end{cases} \\ 
	V(t, \xi) &=  \begin{cases}
		0, & \xi \leq 0, \\
		\xi , &0 < \xi \leq 1, \\
		\frac{1}{20}(\xi + 19), & 1 < \xi \leq 3, \\
		\frac{1}{5}\xi + \frac{1}{2}, & 3 < \xi \leq 8, \\
		\frac{21}{10}, & 8 < \xi. 
		\end{cases}
\end{align*}
As before, $y(t, \cdot)$ is strictly increasing for $t \in (2, \infty)$ and hence invertible. Computing its inverse and using Definition~\ref{def:MapM} yields for $t\geq 2$
\begin{align}\label{ex:eul3}
	u(t, x) &= \begin{cases}
		-\frac{21}{40}t + \frac{9}{5}, & x \leq - \frac{21}{80}t^2 + \frac{9}{5}t + \frac{33}{40}, \\
		\frac{2}{t}\big(x-\frac{9}{10}t - \frac{33}{40}\big), & - \frac{21}{80}t^2 + \frac{9}{5}t + \frac{33}{40} < x \leq -\frac{1}{80}t^2 +\frac{9}{5}t +\frac{33}{40}, \\ 
		\frac{2}{(t-2)}\big(x- \frac{7}{8}t -\frac{21}{8}\big), & -\frac{1}{80}t^2 +\frac{9}{5}t +\frac{33}{40} < x \leq \frac{1}{80}t^2 +\frac{17}{10}t + \frac{37}{40}, \\ 
		\frac{2}{(t-1)}\big(x - \frac{69}{80}t - \frac{71}{40}), &  \frac{1}{80}t^2 +\frac{17}{10}t + \frac{37}{40}< x \leq \frac{21}{80}t^2 + \frac{6}{5}t + \frac{47}{40}, \\ 
		\frac{21}{40}t + \frac{6}{5}, & \frac{21}{80}t^2 + \frac{6}{5}t + \frac{47}{40} < x,  
	\end{cases} \nonumber \\ 
	F(t, x) &= \begin{cases}
		0, & x \leq - \frac{21}{80}t^2 + \frac{9}{5}t + \frac{33}{40}, \\
		\frac{4}{t^2}( x+\frac{21}{80}t^2- \frac{9}{5}t - \frac{33}{40}), &  - \frac{21}{80}t^2 + \frac{9}{5}t + \frac{33}{40} < x \leq -\frac{1}{80}t^2 +\frac{9}{5}t +\frac{33}{40}, \\ 
		\frac{4}{(t-2)^2}(x +\frac{21}{80}t^2 -\frac{14}{5}t +\frac{7}{40}), & -\frac{1}{80}t^2 +\frac{9}{5}t +\frac{33}{40} < x \leq \frac{1}{80}t^2 +\frac{17}{10}t + \frac{37}{40}, \\ 
	\frac{4}{(t-1)^2}(x+\frac{21}{80}t^2 - \frac{9}{4}t - \frac{26}{40}), & \frac{1}{80}t^2 +\frac{17}{10}t + \frac{37}{40} < x \leq  \frac{21}{80}t^2 + \frac{6}{5}t + \frac{47}{40}, \\
	\frac{21}{10}, & \frac{21}{80}t^2 + \frac{6}{5}t + \frac{47}{40} < x.   
	\end{cases}
\end{align}
Finally, combining \eqref{ex:eul1}, \eqref{ex:eul2}, and \eqref{ex:eul3} gives us the globally defined $\alpha$-dissipative solution $(u, F)(t)$ with the initial data $(\bar u, \bar F)$ and $\alpha$ from Example~\ref{ex:Multipeakon}. 
\end{document}